\documentclass[a4paper]{amsart}

\usepackage{amssymb}
\usepackage{amsrefs}
\usepackage{multirow}
\usepackage{hyperref}

\BibSpec{article}{%
  +{}{\PrintAuthors}  		{author}
  +{,}{ \textit}     		{title}
  +{,}{ }             		{journal}
  +{}{ \textbf}       		{volume}
  +{}{ \parenthesize} 		{date}
  +{,}{ }      	      		{conference}
  +{,}{ }      	      		{book}
  +{,}{ }            		{pages}
  +{,}{ }            	 	{note}
  +{,}{ }            	 	{status}
  +{, available at}{  \texttt } {eprint}
  +{.}{}              {transition}
}
\BibSpec{book}{%
  +{}{\PrintAuthors}  {author}
  +{,}{ \textit}      {title}
  +{,}{ }	      {note}
  +{,}{ }             {publisher}
  +{,}{ }             {place}
  +{,}{ }             {date}
  +{.}{}              {transition}
}

%
\newtheorem{theorem}{Theorem}[section]
\newtheorem*{theorem*}{Theorem}

\theoremstyle{plain}
\newtheorem{corollary}[theorem]{Corollary}
\newtheorem{lemma}[theorem]{Lemma}
\newtheorem{proposition}[theorem]{Proposition}

\newtheorem*{lemma*}{Lemma}
\newtheorem*{proposition*}{Proposition}

\theoremstyle{definition}

\newcommand{\CC}{\mathbb{C}}

\newcommand{\RR}{\mathbb{R}}

\newcommand{\calH}{\mathcal{H}}
\newcommand{\calK}{\mathcal{K}}

\newcommand{\calM}{\mathcal{M}}

\newcommand{\dt}{\left.\frac{d}{dt}\right|_{t=0}}

\DeclareMathOperator{\Val}{Val}
\DeclareMathOperator{\vol}{vol}

\DeclareMathOperator{\Grass}{Gr}
\DeclareMathOperator{\Sym}{Sym}

\DeclareMathOperator{\Hess}{Hess}
\DeclareMathOperator{\tr}{tr}
\DeclareMathOperator{\Klain}{Kl}

\begin{document}

\title[Aleksandrov-Fenchel inequalities for unitary valuations]{Aleksandrov-Fenchel inequalities for unitary valuations of degree $2$ and $3$}

\author{Judit Abardia} 
\address[Judit Abardia]{	Institut f\"ur Mathematik\\
				Goethe-Universit\"at Frankfurt\\
				Robert-Mayer-Str. 10\\
				60325 Frankfurt am Main\\
				Germany}
\email{abardia@math.uni-frankfurt.de}

\author{Thomas Wannerer}
\address[Thomas Wannerer]{	Institut f\"ur Mathematik\\
				Goethe-Universit\"at Frankfurt\\
				Robert-Mayer-Str. 10\\
				60325 Frankfurt am Main\\
				Germany}
\email{wannerer@math.uni-frankfurt.de}

\subjclass[2000]{Primary 52A40; Secondary 53C65}
\thanks{The first-named author was supported by DFG grant  BE 2484/3-1 and the second-named author was supported by DFG grant BE 2484/5-1}
\keywords{Aleksandrov-Fenchel inequality, unitary valuation, integral geometry.}

\begin{abstract}We extend the classical Aleksandrov-Fenchel inequality for mixed volumes to functionals arising naturally in hermitian integral geometry. As a consequence, we obtain 
 Brunn-Minkowski and isoperimetric inequalities for hermitian quermassintegrals.
\end{abstract}

\maketitle

\section{Introduction}

The Aleksandrov-Fenchel inequality for mixed volumes states that
\begin{equation}\label{eq:aleksandrov_fenchel}V(K_1,K_2,K_3, \ldots, K_n)^2 \geq  V(K_1,K_1,K_3, \ldots K_n) V(K_2, K_2, K_3,\ldots, K_n)\end{equation}
for all convex bodies $K_1,K_2,\ldots, K_n$ in $\RR^n$ ($n\geq 2$). A whole series of important inequalities between mixed volumes of convex bodies, including the
Brunn-Minkowski and isoperimetric inequalities for quermassintegrals, can be deduced from \eqref{eq:aleksandrov_fenchel} and hence the Aleksandrov-Fenchel inequality can be regarded as the 
main inequality in the Brunn-Minkowski theory of convex bodies. Special cases of \eqref{eq:aleksandrov_fenchel} have been extended to non-convex domains, see \cites{chang_wang13, guan_li09, michael_simon73}.
For applications of the Aleksandrov-Fenchel inequality to the geometry of convex bodies and other fields such as combinatorics, geometric analysis and mathematical physics, 
we refer the reader to \cites{andrews96,chang_wang11, lam10, lutwak85, lutwak93, schneider14, stanley81}
and the references therein. 

Several different proofs of the Aleksandrov-Fenchel inequality are known. In $\RR^3$, the first proof of \eqref{eq:aleksandrov_fenchel} was discovered by  Minkowski \cite{minkowski03} in 1903. In the 1930s, 
Aleksandrov \cites{aleksandrov37,aleksandrov38} gave two different proofs of his inequality, 
one based on strongly isomorphic polytopes and another, building on ideas of Hilbert \cite{hilbert12}*{Chapter 19}, based on elliptic operator theory. 
Around the same time, also Fenchel \cite{fenchel36}  sketched a proof of the inequality \eqref{eq:aleksandrov_fenchel}. In the 1970s,  Khovanski\u\i\ \cite{burago_zalgaller88}*{Section 27}
 and Teissier \cite{teissier79} independently discovered that
the Aleksandrov-Fenchel inequality can be deduced from the Hodge index theorem from algebraic geometry.
More recently, special cases of \eqref{eq:aleksandrov_fenchel} have been proved using optimal mass transport and curvature flow techniques, see \cites{alesker_etal99, chang_wang13, guan_li09}.
For a more complete account of the history of the Aleksandrov-Fenchel inequality, we refer the reader to
 \cite{schneider14}*{p. 398}.

In this work we extend the Aleksandrov-Fenchel inequality for mixed volumes to functionals arising naturally in hermitian integral geometry \cites{alesker03, bernig_fu11}. One way to describe these functionals is as follows:
It is a well-known fact that for $1<k<2n-1$  the action of the unitary group $U(n)$ decomposes the Grassmannian $\Grass_k=\Grass_k(\CC^n)$ 
of $k$-dimensional, real subspaces of $\CC^n$ into infinitely many orbits. 
For $k=2,3$ and $n\geq k$ the orbits of $\Grass_k(\CC^n)$  can be described  by a single real parameter, known as the K\"ahler angle 
$\theta\in [0,\pi/2]$. For example, isotropic (with respect to the standard K\"ahler form on $\CC^n$) subspaces have K\"ahler angle $\pi/2$ and complex subspaces have K\"ahler angle $0$. 
For each  K\"ahler angle  we define two functionals on $\calK(\CC^n)$, the space of convex bodies, i.e.\ non-empty, compact convex sets, of $\CC^n$,
$$\varphi_\theta(K)= \int_{\Grass_2(\theta)} \vol_2(K|E)\;  dE$$
and, for $n\geq 3$,
$$ \psi_\theta(K)= \int_{\Grass_{3}(\theta)} \vol_3(K|E)\;  dE.$$
Here $\Grass_k(\theta)$ ($k=2,3$) denotes the orbit of $\Grass_k(\CC^n)$ corresponding to the  K\"ahler angle $\theta$, $\vol_k(K|E)$ is the $k$-dimensional volume of the orthogonal
projection of the convex body $K$ on the $k$-dimensional subspace $E$,  and $dE$ denotes the $U(n)$-invariant probability measure on the orbit. 

Any linear combination $\mu$  of the functionals $\varphi_\theta$ (respectively,  $\psi_\theta$) is called a unitary valuation. Observe that $\mu(t K)=t^2 \mu(K) $ (respectively, $\mu(t K)=t^3\mu(K)$) for $t>0$.  
If $\mu$ is homogeneous of degree $k$, then 
$$\mu(K_1,K_2,\ldots, K_k)= \frac{1}{k!} \left.\frac{\partial^k}{\partial t_1 \partial t_2\cdots \partial t_k}\right|_{0} \mu(t_1K_1 + t_2K_2+\cdots +t_kK_k)$$
is called the polarization of $\mu$. Here $K+L$ denotes the Minkowski sum of convex bodies. Note that $\mu(K,K,\ldots,K)=\mu(K)$.

Our main result is as follows:

\begin{theorem}\label{th1}
 If $\mu$ belongs to the convex cone generated by $\psi_\theta$ with
\begin{equation}\label{eq:theta3} 0\leq \cos^2 \theta\leq \frac{3(n+1)}{5n-1},\end{equation}
then
\begin{equation}\label{eq:AF3}\mu(K,L,M)^2 \geq \mu(K,K,M) \mu(L,L,M)\end{equation}
for all convex bodies $K,L,M$. Moreover, if $\mu$ belongs to the convex cone generated by $\varphi_\theta$  with
\begin{equation}\label{eq:theta2} 0\leq \cos^2\theta\leq \frac{n+1}{2n},\end{equation}
then
\begin{equation}\label{eq:AF2}\mu(K,L)^2 \geq \mu(K,K) \mu(L,L).\end{equation}
If $\mu=\varphi_\theta$ with 
$\frac{n+1}{2n}<\cos^2\theta$,
then there exist convex bodies for which \eqref{eq:AF2} does not hold.  
\end{theorem}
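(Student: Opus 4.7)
\emph{Proof proposal.} The plan is to reduce the Aleksandrov--Fenchel inequalities \eqref{eq:AF2} and \eqref{eq:AF3} to the classical inequality \eqref{eq:aleksandrov_fenchel} by representing the polarizations of $\varphi_\theta$ and $\psi_\theta$ as mixed volumes in $\RR^{2n}=\CC^n$. Concretely, we aim to produce convex bodies $C_1^\theta,\ldots,C_{2n-2}^\theta$ and $C_1^{\prime\theta},\ldots,C_{2n-3}^{\prime\theta}$ in $\calK(\CC^n)$ such that
$$\varphi_\theta(K,L)=V(K,L,C_1^\theta,\ldots,C_{2n-2}^\theta)\quad\text{and}\quad\psi_\theta(K,L,M)=V(K,L,M,C_1^{\prime\theta},\ldots,C_{2n-3}^{\prime\theta}).$$
Once such representations are established, \eqref{eq:AF2} and \eqref{eq:AF3} follow at once by applying \eqref{eq:aleksandrov_fenchel} to these $2n$-tuples.

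Starting from the Cauchy-type identity $\vol_k(K|E)=c_{n,k}\,V(K[k], D_{E^\perp}[2n-k])$, where $D_{E^\perp}$ is the unit ball in the Euclidean orthogonal complement $E^\perp\subset\CC^n$, averaging over the orbit $\Grass_k(\theta)$ yields $\varphi_\theta$ and $\psi_\theta$ as integrals of mixed volumes. The decisive step is to identify this integral with a \emph{single} mixed volume with convex bodies in the remaining slots. At the level of mixed area measures on $S^{2n-1}$, the integrand contributes a $U(n)$-invariant measure, and the sought representation amounts to writing this measure as a mixed area measure $S(C_1,\ldots,C_{2n-2};\cdot)$ of convex bodies --- equivalently, to verifying positivity of a spherical kernel. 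This is expected to be resolved by harmonic analysis on $S^{2n-1}$ under $U(n)$: decomposing $L^2(S^{2n-1})$ into $U(n)$-isotypic components and diagonalizing the $\Grass_k(\theta)$-averaging operator on each component, positivity becomes an explicit system of eigenvalue inequalities in $\cos^2\theta$. The bounds \eqref{eq:theta2} and \eqref{eq:theta3} should emerge as the largest values of $\cos^2\theta$ for which positivity persists across all relevant irreducibles.

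For the sharpness claim in the range $\cos^2\theta>(n+1)/(2n)$, we construct explicit convex bodies $K,L$ violating \eqref{eq:AF2}. A natural test takes $K$ a Euclidean ball and $L$ a segment calibrated to the complex structure (for instance lying in a complex line); on such a pair, the exceeded positivity threshold corresponds to a negative eigenvalue of the averaging operator, and a direct evaluation of $\varphi_\theta(K,K),\varphi_\theta(K,L),\varphi_\theta(L,L)$ via the orbit formula should exhibit the reversed Minkowski-type inequality exactly in this regime.

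The hardest part of this plan is the second step: converting the $\Grass_k(\theta)$-integral of mixed volumes into a single mixed volume with convex auxiliary bodies, and carrying out the spectral analysis sharply. This requires the full $U(n)$-isotypic decomposition on $S^{2n-1}$ together with an explicit computation of the spectrum of the orbit-averaging operator, and a comparison against the positivity condition that the resulting kernel be a genuine mixed area measure. The sharp thresholds in \eqref{eq:theta2} and \eqref{eq:theta3} are expected to appear as the equality cases of this comparison on the lowest non-trivial irreducible component.
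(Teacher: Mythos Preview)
Your approach is genuinely different from the paper's, but it has real gaps at both the main inequality and the counterexample.

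\textbf{The main inequality.} The paper does not reduce to the classical Aleksandrov--Fenchel inequality at all. Instead it follows Aleksandrov's second proof via elliptic operators: to $\mu=c_0\mu_{3,0}+c_1\mu_{3,1}$ it attaches a quadratic polynomial $p_\mu$ on $\Sym^2(\RR\oplus\CC^{n-1})$, proves directly (using G\aa rding's theory) that $p_\mu$ is hyperbolic in the direction of any positive-definite form precisely when $(c_0,c_1)$ lies in the stated cone, and then shows that the second-order operator $f\mapsto D_{\mu,M}f:=p_\mu(\overline\nabla^2 h_M+h_M\overline g,\overline\nabla^2 f+f\overline g)$ is elliptic and self-adjoint. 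A continuity argument on the spectrum of the associated eigenvalue problem then gives exactly one negative eigenvalue, and \eqref{eq:AF3} follows by the standard quadratic-form argument. The thresholds \eqref{eq:theta2} and \eqref{eq:theta3} arise from the hyperbolicity of $p_\mu$ and from the sign of the eigenvalue of $D_{\mu}$ on the irreducible $\calH_{1,1}$, not from positivity of any Crofton-type kernel.

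Your plan, by contrast, requires writing $\varphi_\theta(K,L)=V(K,L,C_1^\theta,\ldots,C_{2n-2}^\theta)$ with \emph{convex} $C_i^\theta$, and likewise for $\psi_\theta$. You correctly identify this as the hard step, but you do not carry it out, and there is no general principle that supplies such bodies. Mixed volumes are Minkowski-additive in each slot but not linear, so the orbit integral $\int_{U(n)} V(K,L,gD_{E^\perp}[2n-2])\,dg$ cannot be rewritten as $V(K,L,C_1,\ldots,C_{2n-2})$ by averaging the reference body; one would have to produce the $C_i$ by hand. Even monotone, $U(n)$-invariant valuations need not admit such a representation, and your heuristic that ``positivity of a spherical kernel'' is equivalent to it is not justified. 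Without this step the reduction to \eqref{eq:aleksandrov_fenchel} collapses, because integrating the pointwise Aleksandrov--Fenchel inequality over $\Grass_k(\theta)$ and applying Cauchy--Schwarz gives the inequality in the wrong direction.

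\textbf{The counterexample.} Taking $L$ a segment cannot violate \eqref{eq:AF2}: a segment has $\vol_2(L|E)=0$ for every $2$-plane $E$, so $\varphi_\theta(L,L)=\varphi_\theta(L)=0$ and the inequality becomes $\mu(K,L)^2\ge 0$, which is trivial. The paper's counterexample is of a different nature: take $L=B$ and $K$ the body with support function $1+\varepsilon f$ for a nonzero real $f\in\calH_{1,1}$; the failure of \eqref{eq:AF2} for $\cos^2\theta>(n+1)/(2n)$ comes from the fact that $D_\mu$ has a \emph{positive} eigenvalue on $\calH_{1,1}$ in that range.
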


We remark that as in the classical Aleksandrov-Fenchel inequality \eqref{eq:aleksandrov_fenchel} our inequalities also hold if the first convex body is replaced by the difference of support functions of two convex bodies.
Moreover, since $\theta$ can be chosen such that $\psi_\theta(K)$ is proportional to
$$V(K,K,K,B,\ldots,B),$$
where $B$ denotes the unit ball in $\CC^n$, see Lemma~\ref{lem:quermassintegrals} below, the inequality \eqref{eq:AF3} contains the Aleksandrov-Fenchel inequality with $K_4=\cdots=K_{2n}=B$ as a special case.

Aleksandrov's second proof \cite{aleksandrov38} of \eqref{eq:aleksandrov_fenchel}, which we follow closely, makes critical use of Aleksandrov's inequality for mixed discriminants. 
To prove \eqref{eq:AF3}, we first use G\aa rding's theory of hyperbolic
polynomials \cite{garding59} to establish a hermitian analog of this fundamental determinantal inequality (see Proposition~\ref{prop:hyperbolicity} below) and then associate to each $\mu$ an elliptic differential operator.

A complete characterization of the equality cases in the Aleksandrov-Fenchel inequality 
\eqref{eq:aleksandrov_fenchel} is not known. However, in various special cases such a characterization exists, see \cite{schneider14}*{Section 7.6}. 
We say that a convex body $K$ is  $C^{2,\alpha}_+$ if its support function $h_K$ lies in the H\"older space $C^{2,\alpha}(S^{n-1})$ and $\det( \overline\nabla^2h_K + h_K \overline g)>0$,
where $\overline g$ denotes the standard Riemannian metric on the unit sphere $S^{n-1}$ and $\overline \nabla$ denotes the covariant derivative with respect to this metric.
We denote by $\calH_1$ the space of spherical harmonics of degree one, i.e.~restrictions of linear functionals to the unit sphere, and by $\calH_{1,1}\subset\calH_2$ the subspace of spherical harmonics of degree $2$ which 
are invariant under the canonical circle action on the odd-dimensional sphere $S^{2n-1}\subset \CC^n$.
 We establish the following description of equality cases in the inequalities \eqref{eq:AF3} and \eqref{eq:AF2}.

\begin{theorem}\label{th2}
   Suppose  $\mu$ belongs to the convex cone generated by $\psi_\theta$ with 
\begin{equation} \label{eq:theta3_open} 0< \cos^2 \theta< \frac{3(n+1)}{5n-1}\end{equation}
 and $M\in C^{2,\alpha}_+$. If $\mu(L,L,M)>0$,
then equality
holds in the inequality
$$\mu(K,L,M)^2 \geq \mu(K,K,M)\mu(L,L ,M)$$
if and only if $K$ and $L$ are homothetic. If $M$ is a ball, then the above characterization extends to $\cos^2 \theta=0$ and $\frac{3(n+1)}{5n-1}$.

If $\mu$ belongs to the convex cone generated by $\varphi_\theta$ with 
\begin{equation}\label{eq:theta2_halfopen}0\leq \cos^2\theta< \frac{n+1}{2n}\end{equation}
and $\mu(L,L)>0$,
then equality holds in the inequality
$$\mu(K,L)^2 \geq \mu(K,K)\mu(L,L)$$
if and only if $K$ and $L$ are homothetic.
If $\mu= \varphi_{\theta}$ with  $\cos^2\theta= \frac{n+1}{2n}$, then equality holds if and only if
 there exists a constant $\alpha$ such that $ h_K$ and $ \alpha h_L$ differ by an element of $\calH_{1} \oplus \calH_{1,1}$. 
\end{theorem}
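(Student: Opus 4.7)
The plan is to follow Aleksandrov's classical strategy, as in the proof of Theorem~\ref{th1}, and analyze the kernel of the elliptic operator that represents the polarized bilinear form. For the degree-$3$ statement, fix $M\in C^{2,\alpha}_+$ and regard $Q_M(f,g):=\mu(f,g,M)$ as a symmetric bilinear form on the space of $C^{2,\alpha}$ differences of support functions on $S^{2n-1}$. As constructed in the proof of Theorem~\ref{th1}, $Q_M$ can be written as $Q_M(f,g)=\int_{S^{2n-1}} f\,(A_M g)\,d\sigma$ for a self-adjoint second order elliptic operator $A_M$, and \eqref{eq:AF3} is the statement that $A_M$ has exactly one positive eigenvalue above its kernel, with $h_L$ (when $\mu(L,L,M)>0$) lying in the corresponding positive cone. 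A standard Lorentzian / reverse-Cauchy-Schwarz argument then yields that equality in \eqref{eq:AF3} holds precisely when $h_K - c\,h_L \in \ker A_M$ for some $c\in\RR$.

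The heart of the matter is therefore the identification of $\ker A_M$. Since $\mu$ is translation invariant, $\mu(\ell,K,M)=0$ for every $\ell\in\calH_1$, giving the inclusion $\calH_1\subseteq\ker A_M$. For the reverse inclusion under hypothesis \eqref{eq:theta3_open}, the assumption $M\in C^{2,\alpha}_+$ should yield uniform ellipticity of $A_M$, and combined with a maximum principle / unique continuation argument this forces $\ker A_M = \calH_1$. Since $h_K - c\,h_L \in \calH_1$ is equivalent to $K$ and $cL$ differing by a translation, one concludes that $K$ and $L$ are homothetic. When $M = B$ the operator $A_B$ is $U(n)$-invariant and diagonalizes on the $U(n)$-isotypic components of $L^2(S^{2n-1})$; a direct inspection of the eigenvalues then verifies that the characterization extends to the endpoints $\cos^2\theta = 0$ and $\frac{3(n+1)}{5n-1}$. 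For the degree-$2$ statement the same framework applies, but $A_{\varphi_\theta}$ is itself $U(n)$-invariant and reduces on each $(p,q)$-isotypic component to scalar multiplication by some $\lambda_{p,q}(\theta)$. One verifies that $\lambda_{p,q}(\theta)$ has the sign predicted by the Lorentzian structure throughout the range \eqref{eq:theta2_halfopen}, vanishing only on $\calH_1$; at $\cos^2\theta = \frac{n+1}{2n}$ a short calculation shows that $\lambda_{1,1}(\theta)$ vanishes as well, enlarging the kernel to $\calH_1\oplus\calH_{1,1}$ and yielding the weaker equality condition.

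The main obstacle is the identification of $\ker A_M$ when $M$ is not a ball: without the extra $U(n)$-symmetry the operator cannot be diagonalized, and one must argue by analytic means that every solution of $A_M f = 0$ is the restriction of a linear function, which is the hermitian analog of the kernel lemma in Aleksandrov's original argument. A secondary technical point is the explicit computation of $\lambda_{p,q}(\theta)$, which must be precise enough to read off its sign for all relevant $(p,q,\theta)$ and to isolate the single additional zero at $(p,q)=(1,1)$, $\cos^2\theta = \frac{n+1}{2n}$.
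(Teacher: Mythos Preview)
Your overall framework is correct and matches the paper's approach closely: writing $\mu(f,g,M)=\int f\,D_{\mu,M}g\,du$ for a self-adjoint second-order elliptic operator $D_{\mu,M}$, using the Lorentzian signature of the associated form (one positive eigenvalue, the rest non-positive) to conclude that equality in \eqref{eq:AF3} is equivalent to $h_K-c\,h_L\in\ker D_{\mu,M}$, and then identifying this kernel. For $M=B$ and for the degree-$2$ case your diagonalization on the $\calH_{k,l}$ is exactly what the paper does (Proposition~\ref{prop:tildeD}), and the eigenvalue computation there is routine.

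The genuine gap is in the mechanism you propose for showing $\ker D_{\mu,M}=\calH_1$ when $M\in C^{2,\alpha}_+$ is arbitrary. Neither a maximum principle nor unique continuation will do this: the operator has a nontrivial zeroth-order term, and you are not trying to show a solution is constant or that it vanishes from local data. What actually drives the kernel identification is the \emph{pointwise} hyperbolicity of the quadratic polynomial $p_\mu$ on $\Sym^2(\RR\oplus\CC^{n-1})$, established in the paper via G\aa rding's theory (Proposition~\ref{prop:hyperbolicity}): for $A$ positive definite, $p_\mu(A,X)=0$ forces $p_\mu(X,X)\leq 0$, with equality only for $X=0$. Applied with $A=\overline\nabla^2 h_M+h_M\overline g$ and $X=\overline\nabla^2 f+f\overline g$, this says $D_{\mu,M}f=0\Rightarrow D_\mu(f,f)\leq 0$ pointwise. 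One then uses the symmetry identity $\int f\,D_{\mu,M}f\,du=\int h_M\,D_\mu(f,f)\,du$ together with $h_M>0$ to force $D_\mu(f,f)\equiv 0$, hence $\overline\nabla^2 f+f\overline g=0$, hence $f\in\calH_1$. This is precisely the hermitian analog of Aleksandrov's mixed-discriminant step that you flagged as the main obstacle; the missing idea is that the obstacle is algebraic (hyperbolicity of the symbol), not analytic. Note also that passing from smooth $M$ to $M\in C^{2,\alpha}_+$ requires a Schauder-estimate approximation argument (Lemma~\ref{lem:decomp}) to get the decomposition $C^{0,\alpha}=\ker D_{\mu,M}\oplus\operatorname{im}D_{\mu,M}$, which your outline does not address.
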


We remark that we obtain the above characterization of equality cases in the more general situation where $K$ is replaced by the difference of support functions of two convex bodies.

As a consequence of Theorems~\ref{th1} and \ref{th2}, we obtain among several other inequalities a hermitian extension of the Brunn-Minkowski inequality (see Theorem~\ref{thm:BM} below) 
and the following isoperimetric inequalities for hermitian quermassintegrals.

\begin{theorem}\label{th3} Let $\theta\in [\pi/4, \pi/2]$ and choose $\theta'\in [0,\pi/2]$ such that 
$3 \cos^2\theta' = \cos^2 \theta$.
Then 
$$\left(\int_{\Grass_1} \vol_1(K|E) \; dE \right)^2 \geq \frac{4}{\pi} \int_{\Grass_2(\theta)} \vol_2(K|E) \; dE $$
and 
$$\left( \int_{\Grass_2(\theta')} \vol_2(K|E) \; dE \right)^3 \geq \frac{9\pi}{16} \left( \int_{\Grass_3(\theta)} \vol_3(K|E) \; dE \right)^2$$
for all convex bodies in $\CC^n$. If the left-hand side is non-zero, then equality holds if and only if $K$ is a ball.

\end{theorem}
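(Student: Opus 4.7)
The plan is to deduce both inequalities of Theorem~\ref{th3} from the Aleksandrov--Fenchel inequalities of Theorem~\ref{th1} by substituting the unit ball $B$ for some of the arguments and then identifying the resulting mixed functionals with simpler unitary valuations. Since $\theta\in[\pi/4,\pi/2]$ gives $\cos^2\theta\leq 1/2$, which is strictly below $(n+1)/(2n)$ and at most $3(n+1)/(5n-1)$, the hypotheses of Theorems~\ref{th1} and~\ref{th2} are satisfied at every step (boundary values are covered because the argument $M=B$ is a ball, as allowed by the last sentence of Theorem~\ref{th2}).

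For the first inequality, I would apply \eqref{eq:AF2} with $L=B$ to obtain $\varphi_\theta(K,B)^2\geq \varphi_\theta(K)\varphi_\theta(B)$. The polar form $K\mapsto \varphi_\theta(K,B)$ is a continuous, translation-invariant, $U(n)$-invariant valuation of degree one, and the space of such valuations is one-dimensional and spanned by the mean width $\bar w(K)=\int_{\Grass_1}\vol_1(K|E)\,dE$. Hence $\varphi_\theta(K,B)=c_\theta\,\bar w(K)$, and evaluating at $K=B$ using $\varphi_\theta(B)=\pi$ and $\bar w(B)=2$ fixes $c_\theta=\pi/2$, independent of $\theta$. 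Rearranging yields $\bar w(K)^2\geq \tfrac{4}{\pi}\varphi_\theta(K)$.

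For the second inequality, I would chain \eqref{eq:AF3} in two ways. Substituting $(L,M)=(B,K)$ and using symmetry of the polarization gives
\[\psi_\theta(K,K,B)^2\geq \psi_\theta(K)\,\psi_\theta(K,B,B),\]
while $(L,M)=(B,B)$ gives
\[\psi_\theta(K,B,B)^2\geq \psi_\theta(K,K,B)\,\psi_\theta(B).\]
Squaring the first estimate, substituting the second, and dividing by $\psi_\theta(K,K,B)$ yields
\[\psi_\theta(K,K,B)^3\geq \psi_\theta(K)^2\psi_\theta(B).\]
Now $K\mapsto \psi_\theta(K,K,B)$ is a degree-two unitary valuation, and by the hermitian integral-geometric identifications underlying Lemma~\ref{lem:quermassintegrals} it must be proportional to $\varphi_{\theta'}(K)$, where $\theta'$ is precisely the angle with $3\cos^2\theta'=\cos^2\theta$. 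Evaluating at $K=B$ using $\psi_\theta(B)=4\pi/3$ and $\varphi_{\theta'}(B)=\pi$ determines the proportionality constant $4/3$, and $\varphi_{\theta'}(K)^3\geq \tfrac{9\pi}{16}\psi_\theta(K)^2$ follows.

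Equality is handled via Theorem~\ref{th2}. In the first inequality, equality forces equality in the single AF step, so $K$ is homothetic to $B$ and hence a ball (the non-vanishing of $\bar w(K)$ excludes the point case). In the second, equality requires equality in both AF steps; the step with $L=M=B$ already forces $K\sim B$ by Theorem~\ref{th2} (since $M$ is a ball), which automatically produces equality in the first step. The main technical obstacle is the identification of $\psi_\theta(\cdot,\cdot,B)$ with a multiple of $\varphi_{\theta'}$ for the precise angle $3\cos^2\theta'=\cos^2\theta$; this rests on the structure of $U(n)$-invariant valuations of degree two together with an explicit integral-geometric calculation, which I would extract from Lemma~\ref{lem:quermassintegrals} or establish by polarizing the corresponding identification of $\psi_\theta$ with a classical mixed volume involving copies of $B$.
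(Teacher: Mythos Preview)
Your approach is correct and essentially the same as the paper's: both inequalities are obtained by iterating the Aleksandrov--Fenchel inequalities \eqref{eq:AF2} and \eqref{eq:AF3} with $B$ substituted for some arguments, then identifying the resulting mixed functionals. The paper packages the iteration as the single consequence $\mu(K,K,B)^3\geq \mu(B)\mu(K)^2$ (your chain yields exactly this), and the equality discussion via Theorem~\ref{th2} with $M=B$ is handled just as you outline.

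The one point where you are slightly imprecise is the identification of $\psi_\theta(\,\cdot\,,\,\cdot\,,B)$ with $\tfrac{4}{3}\varphi_{\theta'}$. You correctly observe that this is a degree-two unitary valuation and fix the scalar by evaluating at $B$, but since $\Val_2^{U(n)}$ is two-dimensional, knowing merely that it lies in this space does not by itself pin down the angle $\theta'$; one genuinely has to compute both coordinates in the basis $\{\mu_{2,0},\mu_{2,1}\}$. In the paper this is Lemma~\ref{lem:mu_der}, proved not from Lemma~\ref{lem:quermassintegrals} alone but from the explicit description of the area measure $S_\mu$ in Proposition~\ref{prop:Smu_eq_Dmu} (via the Rumin differential formula \eqref{eq:RuminD}), which gives $\mu(K,K,B)=\tfrac{2\omega_{2n-2}}{3\omega_{2n-3}}\big((c_1+(n-2)c_0)\mu_{2,0}+(n-1)c_1\mu_{2,1}\big)$ for $\mu=c_0\mu_{3,0}+c_1\mu_{3,1}$; comparing coefficients with Lemma~\ref{lem:quermassintegrals} then yields $3\cos^2\theta'=\cos^2\theta$.
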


The above inequalities hold in particular for averages over isotropic (resp.\ Lagrangian) subspaces ($\theta=\pi/2$), but the case of complex subspaces ($\theta=0$) is not covered by the theorem.
In fact, although the inequalities hold for a slightly larger range of $\theta$ than 
stated in Theorem~\ref{th3}, we show in Proposition~\ref{prop:counterexample} that the first inequality fails for $\theta<\pi/4$ when $n$ is sufficiently large.

\section{Valuations and area measures}

Valuations are a classical notion from convex geometry. A function $\mu\colon \calK(V)\to \RR$ on the set of non-empty, convex, compact subsets of a finite-dimensional vector space is called a valuation if 
$$\mu(K\cup L)= \mu(K) + \mu(L)-\mu(K\cap L)$$
whenever the union of $K$ and $L$ is again convex.  The space of continuous (with respect to the Hausdorff metric) and translation-invariant
valuations is denoted by $\Val= \Val(V)$. A valuation  $\mu$  is called
 homogeneous of degree $k$ if $\mu(\lambda K)=\lambda^k \mu(K)$ for every $\lambda >0$ and $\Val_k\subset \Val$ denotes the subspace of $k$-homogeneous valuations.
 By a fundamental result of McMullen \cite{mcmullen77}, every continuous and translation-invariant
 valuation is the sum of homogeneous valuations
$$\Val=\bigoplus_{k=0}^n \Val_k.$$
As a consequence, one can associate to each $\mu\in \Val_k$  a unique function on the $k$-fold product
 $\calK(V)\times\cdots \times \calK(V)$, which is again denoted by $\mu$ and called the polarization of $\mu$, such that (i) $\mu(K,\ldots, K)=\mu(K)$; (ii) $\mu$ is symmetric in its arguments; and (iii)
for every $K,L, K_2,\ldots,K_k\in \calK(V)$ and $ s,t>0$
$$\mu( sK+tL,K_2\ldots, K_{k}) =s \mu( K,K_2\ldots, K_{k}) + t\mu(L,K_2\ldots, K_{k}) .$$
If $P$ is just a point, then, by the translation-invariance of $\mu$, 
\begin{equation}\label{eq:val_kernel}\mu(P, K_2,\ldots, K_k)=0.\end{equation}

From now on let $V$ be a finite-dimensional, euclidean vector space. The support function of $K\in\calK(V)$ is the function on the unit sphere of $V$ 
defined by $h_K(u)= \sup_{x\in K} \left\langle u,x\right\rangle $, where $\left\langle u,x\right\rangle$ denotes the inner product on $V$. If $f$ is the difference of two support functions, say $f=h_K-h_L$, then one 
defines 
$$\mu(f,K_2,\ldots, K_{k})= \mu(K,K_2,\ldots, K_{k})-\mu(L,K_2,\ldots, K_{k}).$$
 Similarly,   $\mu( f_1, f_2, K_3,\ldots, K_{k})$, where $f_1$ and $f_2$ are differences of support functions, is defined. In the following we will make frequent use of the fact that every $C^2$ function on the sphere
is the difference of two support functions, see, e.g., \cite{schneider14}*{Lemma 1.7.8}.

We denote by $\Grass_k=\Grass_k(V)$ the Grassmannian of real $k$-dimensional subspaces of $V$. If $\mu\in \Val_k$ is even, that is $\mu(-K)=\mu(K)$, then, by a theorem of Hadwiger (see below), the restriction of $\mu$ to 
$E\in \Grass_k$ is a multiple of the $k$-dimensional Lebesgue measure on $E$, and the corresponding factor is denoted by $\Klain_\mu(E)$. The function $\Klain_\mu\colon \Grass_k\to \RR$ is called the Klain function of
$\mu$ and, by a theorem of Klain \cite{klain00}, it determines $\mu$ uniquely.

A celebrated theorem of Hadwiger characterizes linear combinations of the intrinsic volumes, which are defined by
$$\mu_k(K)= \frac{1}{\omega_{n-k}} \binom{n}{k} V(\underbrace{K,\ldots,K}_{k \text{ times}},B,\ldots,B),$$
where $B\subset \RR^n$ is the euclidean unit ball and $\omega_{k}$ is the volume of the $k$-dimensional euclidean unit ball,
 as the only valuations on $\RR^n$ which are continuous and isometry invariant. 
In particular, this result shows that the space of continuous and isometry invariant valuations on $\RR^n$ is finite-dimensional. 

In \cite{alesker03} Alesker proved the following hermitian extension of Hadwiger's theorem: the space of valuations on $\CC^n$ which are continuous and invariant under affine unitary 
transformations is finite-dimensional. The space of these valuations is denoted by $\Val^{U(n)}$ and its elements are called unitary valuations. Here $U(n)$ denotes the group of unitary transformations, i.e.\ those
 $\CC$-linear maps $A\colon\CC^n\rightarrow \CC^n$ which preserve the 
standard K\"ahler form $\omega  = \sum_{i=1}^n dx_i \wedge dy_i$.  
For every integer $0\leq k\leq 2n$, we denote by $\Val_k^{U(n)}$ the subspace of $k$-homogeneous valuations. While $\Val^{SO(n)}_k$ is one-dimensional and spanned by the intrinsic volume $\mu_k$, Alesker proved in \cite{alesker03} that 
\begin{equation}\label{eq:dimension}\dim \Val^{U(n)}_k = 1 + \min\left\{ \left\lfloor \frac{ k}{2} \right\rfloor , \left\lfloor \frac{2n- k}{2} \right\rfloor \right\}.\end{equation}                                                                                                                                                                       
For more information on valuation theory,  see \cites{alesker01,alesker_etal11,alesker_faifman,haberl_parapatits14,haberl_parapatits15,ludwig06,ludwig10,ludwig_reitzner10,parapatits_schuster12,parapatits_wannerer13,schuster08,schuster10} and the references therein. For recent applications to 
integral geometry, we refer the reader to \cites{abardia_etal12,alesker10,bernig09a,bernig09b,bernig_fu06,bernig_fu11,bernig_etal,fu06,wannerer13+}.

In this article we establish inequalities for unitary valuations of degree $2$ and $3$. Let us describe the spaces $\Val^{U(n)}_k$ for $k=2,3$ and $ n\geq k$ explicitly.
For $k=2,3$, the action of $U(n)$  decomposes $\Grass_k(\CC^n)$  into 
infinitely many orbits parametrized by the K\"ahler angle $\theta\in [0, \pi/2]$.  Given $E\in \Grass_k$, the K\"ahler angle $\theta=\theta(E)$  is defined by
$$\cos^2\theta = | \omega_E|^2,$$
where $\omega_E$ denotes the restriction of the K\"ahler form $\omega$ to $E$ and $| \cdot|$ denotes the induced euclidean norm
on $\wedge^2 E$.  The K\"ahler angle of $E\in \Grass_{2n-k}(\CC^n)$ is, by definition,
the K\"ahler angle of $E^\perp$. In a similar way,
 the $U(n)$-orbits of $\Grass_k(\CC^n)$ for $3<k<2n-3$ can be described by multiple K\"ahler angles, see \cite{tasaki03}.

Since every $\mu\in \Val_k^{U(n)}$ is even, it is uniquely determined by its Klain function $\Klain_\mu$, which, by the $U(n)$-invariance of $\mu$,  is constant on every $U(n)$-orbit. 
For $k=2,3$ and $n\geq k$, the space $\Val_k^{U(n)}$ is $2$-dimensional and 
spanned by two special valuations $\mu_{k,0}$ and $\mu_{k,1}$. In terms of Klain functions, they are given by
$$\Klain_{\mu_{k,0}}= 1-\cos^2 \theta \qquad \text{and}\qquad \Klain_{\mu_{k,1}} =\cos^2 \theta$$
where $\theta$ denotes the  K\"ahler angle, see \cite{bernig_fu11}*{Corollary~3.8}. Moreover, the spaces $\Val_{2n-k}^{U(n)}$ are also $2$-dimensional and spanned by two valuations
$\mu_{2n-k,n-k}$ and $\mu_{2n-k,n-k+1}$ satisfying
$$\Klain_{\mu_{2n-k,n-k}}= 1-\cos^2 \theta \qquad \text{and}\qquad \Klain_{\mu_{2n-k,n-k+1}} =\cos^2 \theta.$$

The following lemma expresses the valuations $\varphi_\theta$ and $\psi_\theta$ defined in the introduction in terms of $\mu_{k,0}$ and $\mu_{k,1}$.

\begin{lemma} \label{lem:quermassintegrals}
$$\varphi_\theta = \frac{1}{4n(n-1)} \left(   \left(2n-1-\cos^2\theta\right)  \mu_{2,0} + 2(n-1)\left( 1 + \cos^2 \theta \right) \mu_{2,1} \right)$$
and
$$\psi_\theta= \frac{2^{n-2} (n-3)!}{n\pi (2n-3)!!} \left( \left( 2n-3-\cos^2\theta \right) \mu_{3,0} + 2(n-2)\left(1
+ \frac{1}{3} \cos^2 \theta \right) \mu_{3,1} \right).$$  
\end{lemma}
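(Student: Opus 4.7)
My plan is to use the Klain embedding. Both $\varphi_\theta$ and $\psi_\theta$ are continuous, translation-invariant, $U(n)$-invariant valuations---this is immediate from their defining integrals---and homogeneous of degrees $2$ and $3$ respectively, so they lie in the two-dimensional spaces $\Val_2^{U(n)}$ and $\Val_3^{U(n)}$. Since the basis $\{\mu_{k,0}, \mu_{k,1}\}$ has Klain functions $1-\cos^2\theta(\cdot)$ and $\cos^2\theta(\cdot)$, writing $\varphi_\theta = a(\theta)\mu_{2,0} + b(\theta)\mu_{2,1}$ and $\psi_\theta = c(\theta)\mu_{3,0} + d(\theta)\mu_{3,1}$, it suffices to determine the four coefficients by computing the Klain functions of $\varphi_\theta$ and $\psi_\theta$ on two test subspaces each.

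For $F \in \Grass_k$ of K\"ahler angle $\theta'$ and $Q \subset F$ of unit $k$-volume, one has $\vol_k(Q|E) = |\langle \eta_F, \eta_E \rangle|$, where $\eta_F, \eta_E$ are unit simple $k$-vectors in $\wedge^k \CC^n$ representing $F$ and $E$. Hence
\begin{equation*}
\Klain_{\varphi_\theta}(F) = \int_{\Grass_2(\theta)} |\langle \eta_F, \eta_E\rangle|\, dE, \qquad \Klain_{\psi_\theta}(F) = \int_{\Grass_3(\theta)} |\langle \eta_F, \eta_E\rangle|\, dE.
\end{equation*}
By the form of the Klain map on $\Val_k^{U(n)}$ each of these integrals is necessarily affine in $\cos^2\theta'$, so I would evaluate at one $F$ of K\"ahler angle $0$ and one $F$ of K\"ahler angle $\pi/2$ (i.e.\ totally real) in order to read off both coefficients.

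The computations reduce to spherical integrals. For $k=2$, I would parametrize $\Grass_2(\theta)$ via pairs $(e_1, e_2)$ of orthonormal vectors in $\CC^n$ subject to the codimension-one constraint $\langle Je_1, e_2\rangle = \pm \cos\theta$, which cuts $e_2$ out of $e_1^\perp \cap S^{2n-1}$ on a $(2n-3)$-sphere of radius $\sin\theta$. On a complex $F$, the integrand $\langle \eta_F, \eta_E\rangle = \omega|_F(\pi_F e_1, \pi_F e_2)$ expands as a polynomial in the $F$-coordinates of $e_1, e_2$, and integration against the invariant measure reduces to standard averages such as $\int_{S^{2n-1}} |\langle u, v\rangle_\CC|^2\, d\sigma(v) = 1/n$. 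The totally real case is analogous. Matching $a(\theta), b(\theta)$ to the two resulting values recovers the formula for $\varphi_\theta$.

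The main obstacle will be the $\psi_\theta$ calculation. The K\"ahler angle of a $3$-plane $E$ is governed by the one-dimensional kernel of $\omega|_E$, so $\Grass_3(\theta)$ fibers naturally over $S^{2n-1}$ with fiber a K\"ahler-angle orbit in a Grassmannian of $2$-planes in a $(2n-1)$-dimensional subspace, producing an extra spherical integration. Moreover, $|\langle \eta_F, \eta_E\rangle|$ is now the absolute value of a $3 \times 3$ determinant in the projection data, requiring more careful bookkeeping than the $k=2$ case. Carrying out the same strategy on a K\"ahler angle $0$ test $3$-plane (of the form $\CC w \oplus \RR u$ with $u \perp \CC w$) and on a totally real $3$-plane yields $c(\theta), d(\theta)$ and the claimed formula for $\psi_\theta$.
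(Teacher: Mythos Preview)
Your Klain-function strategy is valid in principle and genuinely different from the paper's. The paper does not compute $\Klain_{\varphi_\theta}$ or $\Klain_{\psi_\theta}$ directly at all. Instead, it rewrites $\int_{\Grass_k(\theta)}\vol_k(K|E)\,dE$ as the leading coefficient of $r\mapsto \int_{U(n)}\vol_{2n}(K+g\cdot rB_{E^\perp})\,dg$, converts this to the kinematic integral $\int_{\overline{U(n)}}\chi(K\cap g' rB_{E^\perp})\,dg'$, and then reads off the answer from the Bernig--Fu principal kinematic formula: the only residual computation is $\mu_{2n-k,p}(B_{E^\perp})$, which is immediate from the Klain functions since $B_{E^\perp}$ sits in a $(2n-k)$-plane of K\"ahler angle $\theta$. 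So the paper trades your spherical integrals for a citation of a deep known formula.

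The one point where your plan is optimistic is the claim that the Klain integrals ``reduce to standard averages such as $\int_{S^{2n-1}}|\langle u,v\rangle_\CC|^2\,d\sigma(v)$.'' The Klain integrand is $|\langle \eta_F,\eta_E\rangle|$, an absolute value of a (bi)linear expression, not a squared modulus; this is a first absolute moment (a cosine-transform value), not a second moment, and does not in general reduce to polynomial averages over spheres. These integrals are computable --- they are exactly the restricted cosine transforms on $\Grass_k(\CC^n)$ that appear, e.g., in Bernig--Fu's work --- but you should expect to invoke or rederive those transform values rather than get by with $\int|\langle u,v\rangle_\CC|^2$. The $k=3$ case will compound this. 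What your route buys is independence from the full kinematic formula; what the paper's route buys is that all the hard integration has already been packaged into that formula, leaving only a trivial evaluation on $B_{E^\perp}$.
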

 
\begin{proof} Fix $E\in\Grass_{k}(\theta)$ and let $B_{E^\perp}$ be the unit ball in $E^\perp$. 
Observe that 
$$\vol_{2n}(K+ rB_{E^\perp}) = \omega_{2n-k}  \vol_k(K|E) r^{2n-k} + O(r^{2n-k-1})$$
and hence
\begin{align*}\int_{\Grass_k(\theta)} \vol_k(K|E)\; dE &= \frac{1}{\omega_{2n-k}}\lim_{r\to \infty} \frac{1}{r^{2n-k}}\int_{U(n)}  \vol_{2n}(K+ g\,  rB_{E^\perp}) \; dg\\
  &= \frac{1}{\omega_{2n-k}}\lim_{r\to \infty} \frac{1}{r^{2n-k}}\int_{U(n)\ltimes \CC^n}  \chi(K \cap   g'\, r B_{E^\perp}) \; dg',
\end{align*}
where $\chi$ is the Euler characteristic.
The integral on the right-hand side can be evaluated using the principal kinematic formula for the unitary group established by Bernig and Fu \cite{bernig_fu11}.  Since an even valuation is uniquely determined by its 
Klain function, it suffices to check the formula  for $\varphi_\theta$  only for $2$-dimensional convex bodies $K$. 
The $(2,2n-2)$ bi-degree part of the principal kinematic formula is given by
\begin{align*}
 \frac{1}{4n(n-1)}  &\bigg[ (2n-1) \mu_{2,0}\otimes \mu_{2n-2,n-2}  + 2(n-1)\mu_{2,0}\otimes \mu_{2n-2,n-1}\\
&\qquad\qquad\qquad + 2(n-1)\mu_{2,1}\otimes \mu_{2n-2,n-2} +      4(n-1) \mu_{2,1}\otimes \mu_{2n-2,n-1}   \bigg],
\end{align*}
see \cite{bernig_fu11}*{p. 941}.
Since $\mu_{2n-2,n-2}(B_{E^\perp}) =\omega_{2n-2} (1-\cos^2 \theta )$ and $ \mu_{2n-2,n-1}(B_{E^\perp}) = \omega_{2n-2} \cos^2\theta$, the claim follows.

The formula for $\psi_\theta$ is  proved in the same way using that  the  $(3,2n-3)$ bi-degree part of the principal kinematic formula is given by
\begin{align*}
 &\frac{2^{n-2} (n-3)!}{n\pi (2n-3)!!}&  \\ &\qquad\qquad\bigg[ (2n-3) \mu_{3,0}\otimes \mu_{2n-2,n-3}  + 2(n-2) \mu_{3,0}\otimes \mu_{2n-3,n-2}\\ 
&\qquad\qquad\qquad+  2(n-2)\mu_{3,1}\otimes \mu_{2n-3,n-3} +    \frac{8(n-2)}{3} \mu_{3,1}\otimes \mu_{2n-3,n-2}   \bigg].
\end{align*}

\end{proof}

\begin{corollary}
The valuation $\mu= c_0 \mu_{2,0} + c_1 \mu_{2,1}$ belongs to the convex cone generated by $\varphi_\theta$ with $\theta$ satisfying \eqref{eq:theta2} if and only if 
$$2(n-1) c_0\leq (2n-1)c_1\qquad \text{and} \qquad (4n+1)c_1 \leq 2(3n+1) c_0.$$
The valuation $\mu= c_0 \mu_{3,0} + c_1 \mu_{3,1}$ belongs to the convex cone generated by $\psi_\theta$ with $\theta$ satisfying \eqref{eq:theta3}
 if and only if 
$$2(n-2)c_0\leq (2n-3)c_{1}\qquad \text{and}\qquad 5c_1\leq 6c_{0}.$$
\end{corollary}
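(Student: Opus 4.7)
The plan is to identify $\Val_k^{U(n)}$ with $\RR^2$ via the basis $\{\mu_{k,0},\mu_{k,1}\}$, so that each valuation $\varphi_\theta$ (respectively $\psi_\theta$) is represented by a single ray through the origin, parametrized by the variable $s=\cos^2\theta$. The convex cone generated by a family of rays in the open first quadrant is determined by the range of the slope $c_1/c_0$; provided this slope depends monotonically on $s$, the cone is simply the planar sector bounded by the two extreme rays.

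First, I would read off the coordinates of $\varphi_\theta$ and $\psi_\theta$ from Lemma~\ref{lem:quermassintegrals}. Discarding the positive overall normalization, the slope $c_1/c_0$ equals
$$f(s)=\frac{2(n-1)(1+s)}{2n-1-s}\qquad\text{and}\qquad g(s)=\frac{2(n-2)(1+s/3)}{2n-3-s}$$
in the two cases. A direct differentiation gives $f'(s)=\frac{4n(n-1)}{(2n-1-s)^2}>0$ and $g'(s)=\frac{4n(n-2)/3}{(2n-3-s)^2}>0$ throughout the respective intervals $0\leq s\leq \frac{n+1}{2n}$ and $0\leq s\leq \frac{3(n+1)}{5n-1}$. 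Hence the set of rays generated by $\varphi_\theta$ (respectively $\psi_\theta$) sweeps out precisely one planar sector, bounded by the extreme rays at $s=0$ and at the upper endpoint.

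It then remains to compute the extreme rays. For $\varphi_\theta$, the ray at $s=0$ is directed along $(2n-1,\,2(n-1))$, while substituting $s=\frac{n+1}{2n}$ and clearing denominators gives, after a short calculation, the direction $(4n+1,\,2(3n+1))$. Translating the condition $f(0)\leq c_1/c_0\leq f(\tfrac{n+1}{2n})$ into cross-multiplied form yields exactly
$$2(n-1)c_0\leq (2n-1)c_1\qquad\text{and}\qquad (4n+1)c_1\leq 2(3n+1)c_0.$$
For $\psi_\theta$, the ray at $s=0$ is $(2n-3,\,2(n-2))$, giving $2(n-2)c_0\leq (2n-3)c_1$, and substituting $s=\frac{3(n+1)}{5n-1}$ produces, after cancellation of the common factor $\frac{2n(n-2)}{5n-1}$, the direction $(5,6)$, giving $5c_1\leq 6c_0$.

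There is no real obstacle here: once the monotonicity of $f$ and $g$ is established, the result reduces to elementary plane geometry, and the only care needed is in executing the arithmetic at the upper endpoint correctly so that the stated boundary inequalities emerge in their simplified form.
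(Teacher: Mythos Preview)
Your argument is correct and is exactly the natural elaboration of what the paper leaves implicit: the corollary is stated there without proof, as an immediate consequence of Lemma~\ref{lem:quermassintegrals}. Your monotonicity-of-slope computation and the endpoint arithmetic (in particular the factorizations $4n^2-3n-1=(n-1)(4n+1)$ and $10n^2-20n=10n(n-2)$) are the right way to make the ``immediate'' precise, and nothing is missing.
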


For every $\mu\in \Val_k$ which is given by integration with respect to the normal cycle (see, e.g., \cite{alesker_fu08} for this notion) and every convex body $K$ there exists a signed Borel measure $S_\mu(K)$ 
on the unit sphere of $V$, called the area measure associated to $\mu$, such that
$$\mu(K,\ldots, K,L) = \frac{1}{k}  \dt \mu(K+tL)  = \frac{1}{k}\int h_L\; dS_\mu(K),$$
for every convex body $L$. Explicitly, if $\mu=\int_{N(K)}\omega$, then 
\begin{equation}\label{eq:def_area_meas} S_\mu(K) = \pi_{2*} (N(K) \llcorner ( T\lrcorner D\omega)),\end{equation}
where $\pi_2\colon V\times V\to V$, $\pi_2((u,v))=v$,  $N(K)$ is the normal cycle of $K$, $T$ denotes the Reeb vector field on the sphere bundle of $V$, and $D$ is the Rumin differential, see Proposition~2.2 of \cite{wannerer13}.

Observe that $K\mapsto S_\mu(K)$ is a translation-invariant, $(k-1)$-homogeneous valuation with values in the space of signed Borel measures on the unit sphere, which is continuous: If $K_i\to K$ with respect to the Hausdorff metric,
 then $S_\mu(K_i)\to S_\mu(K)$ with respect to the weak-* topology (see Lemma~2.4 of \cite{wannerer13+}). Hence, by a result of McMullen \cite{mcmullen77}*{Theorem 14}, 
there exists a polarization of $S_\mu$, which, for the sake of simplicity, we denote again by $S_\mu$. More precisely,
there exists a unique  map $S_\mu$ from the $(k-1)$-fold product $\calK(V)\times \cdots \times \calK(V)$ to the space of signed Borel measures on the unit sphere such that
(i) $S_\mu(K,\ldots,K)=S_\mu(K)$; (ii) $S_\mu$ is symmetric in its arguments; (iii) for every $K,L,K_2,\ldots, K_{k-1}$ and  $s,t > 0$
\begin{equation}\label{eq:linearityS}S_\mu( sK+tL,K_2\ldots, K_{k-1}) =s S_\mu( K,K_2\ldots, K_{k-1}) + tS_\mu(L,K_2\ldots, K_{k-1});  \end{equation}
and (iv) for all convex bodies $K_1,\ldots,K_{k-1},L$ 
$$\mu(K_1,\ldots, K_{k-1}, L)= \frac{1}{k} \int h_L \; dS_\mu(K_1,\ldots,K_{k-1}).$$
Moreover, since $\mu(L,K_1,K_2,\ldots, K_{k-1})= \mu(K_1,L,K_2\ldots, K_{k-1})$, property (iv)  implies
\begin{equation}\label{eq:symmetryS}\int h_L \; dS_\mu(K_1,K_2,\ldots, K_{k-1}) = \int h_{K_1} \; dS_\mu(L,K_2,\ldots, K_{k-1}).\end{equation}
We call the polarization of $S_\mu$ the mixed area measure associated to $\mu$.

\section{Elliptic differential operators associated to unitary valuations}

In the following we always assume that $\mu\in \Val_k^{U(n)}$ for $k=2,3$ and $ n\geq k$. 
In a first step, we associate to every valuation $\mu= c_0 \mu_{k,0} + c_1 \mu_{k,1}$
a polynomial function $p_{\mu}$ on $\Sym^2 (\RR\oplus \CC^{n-1})$, the space of symmetric bilinear forms on $\RR\oplus\CC^{n-1}$. To this end we choose an
 orthonormal basis  $\{e_{\overline 1},e_{2},e_{\overline 2},\dots,e_{n},e_{\overline{n}}\}$ of  $\RR\oplus\CC^{n-1}$ such that $e_{\overline 1}$ is an element of the first summand and $Je_{i}=e_{\overline i}$. Here and in the following 
$J$ denotes the standard complex structure on $\CC^n$. 
With respect to this basis a bilinear form $A\in \Sym^2 (\RR\oplus \CC^{n-1})$ is represented by a matrix $(A^i_j)$. We denote by $A^{i_{1}\dots i_{k}}_{j_{1}\dots j_{k}}$ the determinant of the submatrix of $A$ obtained from the
 rows $i_{1},\dots,i_{k}$ and columns $j_{1},\dots,j_{k}$. For $\mu=c_0\mu_{2,0} + c_1\mu_{2,1}$, we define the polynomial $p_{\mu}$ by
$$p_{\mu}(A)= \frac{1}{\omega_{2n-2}} \bigg( ((2n-1)c_1- 2(n-1)c_0) A_{\overline 1} ^{\overline  1} + (2c_0-c_1) \sum_{i=2}^n\left(A_{i}^{i} + A_{\overline i}^{\overline i}\right) \bigg)$$
and, for $\mu=c_0\mu_{3,0} + c_1\mu_{3,1}$, by
\begin{align*}p_{\mu}(A)=\frac{1}{\omega_{2n-3 }}&\bigg( ((2n-3)c_1 -2(n-2)c_0)  \sum_{i=2}^n\left(A_{\overline 1 i}^{\overline1 i}+ A_{\overline 1 \overline i}^{\overline{1}\overline{i}}\right) \\
&\qquad+ (3c_{0}-2c_{1}) \sum_{2\leq i<j\leq n} \left(A_{ij}^{ij}+ A_{i\overline j}^{i\overline j} +A_{\overline i j}^{\overline i j} + A_{\overline i \overline j}^{\overline i \overline j} -2 A_{j\overline j}^{i \overline i}\right)\\
  & \qquad +c_{1} \sum_{2\leq i,j\leq n} A_{j\overline j}^{i \overline i} \bigg).
\end{align*}
Note that the definition of $p_\mu$ does not depend on the particular choice of the orthonormal basis $\{e_{\overline 1},e_{2},e_{\overline 2},\dots,e_{n},e_{\overline{n}}\}$ with the above properties.

For every $u\in S^{2n-1}$, choose an orthonormal basis $\{e_{\overline1}, e_2, e_{\overline 2},\dots,e_{n},e_{\overline n}\}$ of $T_u S^{2n-1}$ such that $Ju=e_{\overline 1}$ and $J e_{i} = e_{\overline{i}}$. 
If $f$ is a $C^2$ function on the unit sphere, we define 
$$D_{\mu}(f)=p_{\mu}(\overline\nabla^2 f+f\overline g),$$
where $\overline g$ denotes the canonical metric on the unit sphere and $\overline\nabla$ the covariant derivative with respect to this metric. 
If $K$ is a convex body with $C^2$ support function, then we also write $D_{\mu}(K)$ instead of  $D_{\mu}(h_{K})$.
In the case $\mu =c_0\mu_{3,0} + c_1\mu_{3,1}$ we consider also the polarization of the $2$-homogeneous polynomial $p_\mu$, again denoted by $p_\mu$, and define
$$D_{\mu}(f_1,f_2)=p_{\mu}(\overline\nabla^2 f_1+f_1\overline g,\overline\nabla^2 f_2+f_2\overline g),$$
for $C^2$ functions $f_1, f_2$ on the unit sphere.  Note that $D_\mu(f,f)= D_\mu(f)$. If $K,L$ are convex bodies with $C^2$ support functions, we write  $D_{\mu}(K,L)$ instead of $D_{\mu}(h_{K},h_{L})$.

\begin{proposition}\label{prop:Smu_eq_Dmu}
If $K$ is a convex body with support function in $C^2(S^{2n-1})$, then 
\begin{equation}\label{eq:Smu_eq_Dmu} dS_\mu(K) = D_\mu(K) \; du,\end{equation}
where $du$ denotes the Riemannian measure on the sphere. 
\end{proposition}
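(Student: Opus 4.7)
My plan is to verify directly the integral identity characterizing $S_\mu(K)$, reducing to a computation of the pull-back of the defining differential forms under the Gauss map.

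First I would exploit linearity: both sides of \eqref{eq:Smu_eq_Dmu} are linear in the coefficients $c_0,c_1$ in the decomposition $\mu=c_0\mu_{k,0}+c_1\mu_{k,1}$, so it suffices to treat the cases $\mu=\mu_{k,0}$ and $\mu=\mu_{k,1}$ separately. Since $K\mapsto S_\mu(K)$ is weak-$*$ continuous and $K\mapsto D_\mu(K)\,du$ is continuous with respect to $C^2$-convergence of support functions, a standard smoothing argument reduces the claim to convex bodies $K$ with $h_K\in C^\infty(S^{2n-1})$. By \eqref{eq:symmetryS} and the density of differences of support functions in $C(S^{2n-1})$, the measure $S_\mu(K)$ is uniquely determined by the linear functional $L\mapsto k\mu(K,\ldots,K,L)$, so it is enough to prove
\begin{equation*}
\int_{S^{2n-1}} h_L\, D_\mu(K)\,du \;=\; k\mu(K,\ldots,K,L)
\end{equation*}
for every convex body $L$ with $h_L\in C^2$.

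To establish this I would use the representation \eqref{eq:def_area_meas}. Fix $U(n)$-invariant differential forms $\omega_{k,0},\omega_{k,1}$ on the sphere bundle of $\CC^n$ representing $\mu_{k,0}$ and $\mu_{k,1}$, as provided by Bernig-Fu. For smooth $K$, the normal cycle $N(K)$ is parametrised by the graph of the Gauss map $u\mapsto (h_K(u)u+\overline\nabla h_K(u),u)$, and $S_\mu(K)$ is the push-forward to $S^{2n-1}$ of $T\lrcorner D\omega$ evaluated along this graph. Working in the adapted orthonormal frame $\{e_{\overline 1}=Ju,e_2,e_{\overline 2},\ldots,e_n,e_{\overline n}\}$ used in the definition of $p_\mu$, the pull-back is a universal polynomial in the entries $A^i_j=(\overline\nabla^2 h_K+h_K\overline g)^i_j$; the content of the proposition is that it coincides with $p_\mu(A)$.

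The principal obstacle is this final explicit computation, especially for $k=3$. One must expand the Bernig-Fu forms in the hermitian frame, apply the Rumin differential, contract with the Reeb vector field, and collect terms by the various $3\times 3$ minors of $A$. The distinguished direction $e_{\overline 1}=Ju$ naturally splits the resulting density into three blocks -- minors containing $\overline 1$, the off-diagonal $A_{j\overline j}^{i\overline i}$ type entries, and the remaining $3\times 3$ minors -- which matches the three-term structure of $p_\mu$. The precise coefficients $(2n-1)c_1-2(n-1)c_0$, $2c_0-c_1$, $(2n-3)c_1-2(n-2)c_0$, $3c_0-2c_1$ and $c_1$ are then forced by bookkeeping. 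As a sanity check, evaluating both sides on the unit ball (for which $A=\overline g$) and on $2$- and $3$-dimensional convex bodies contained in subspaces of extremal K\"ahler angle $0$ or $\pi/2$ fixes the normalisations via Lemma~\ref{lem:quermassintegrals}.
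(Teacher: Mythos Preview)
Your approach is essentially the paper's: parametrise $N(K)$ via the (inverse) Gauss map, pull back $T\lrcorner D\omega$, and identify the result with $p_\mu(\overline\nabla^2 h_K+h_K\overline g)$. Two corrections and one shortcut are worth noting.

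First, reducing to $h_K\in C^\infty$ by smoothing is not sufficient for the Gauss-map parametrisation to work: you need $K\in C^2_+$ so that $\nu$ is a $C^1$-diffeomorphism. The paper handles this cleanly by first proving the identity for $K\in C^2_+$ and then extending to arbitrary $C^2$ support functions via $K_\varepsilon=K+\varepsilon B$, using that both sides are polynomial in $\varepsilon$.

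Second, for $k=3$ the polynomial $p_\mu$ has degree $k-1=2$, so the pull-back is expressed in $2\times2$ minors of $A$, not $3\times3$ minors as you write. Your three-block description otherwise matches the structure of $p_\mu$.

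Finally, the ``principal obstacle'' you identify --- computing the Rumin differential --- is bypassed entirely in the paper. Bernig and Fu have already computed $T\lrcorner D\omega$ for the invariant forms $\beta_{k,q},\gamma_{k,q}$ (their Propositions~3.4 and 4.6), so the paper simply quotes these to express $T\lrcorner D\omega$ as an explicit linear combination of $\beta_{1,0},\gamma_{1,0}$ (for $k=2$) or $\beta_{2,0},\gamma_{2,0},\gamma_{2,1}$ (for $k=3$), and then pulls back each of these five forms individually (Lemma~\ref{lem:pullbacks}). This is straightforward bookkeeping in the adapted frame and renders the sanity checks you propose unnecessary.
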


For the proof of \eqref{eq:Smu_eq_Dmu} we have to introduce more notation. Choose an orthonormal basis
$\{e_1,e_{\overline 1},e_{2},e_{\overline 2},\dots,$ $e_{n},e_{\overline{n}}\}$   of  $\CC^{n}$ such that $Je_{i}=e_{\overline i}$ and denote by
 $$(x_1,y_1,\ldots,x_n,y_n,\xi_1,\eta_1,\ldots,\xi_n,\eta_n)$$ the corresponding coordinates on $\CC^n\oplus\CC^n$. 
The $1$-forms
\begin{align*}
	\alpha & = \sum_{i=1}^n \xi_idx_i +\eta_i dy_i,\\
	\beta & = \sum_{i=1}^n \xi_idy_i -\eta_i dx_i,\\
	\gamma & = \sum_{i=1}^n \xi_id\eta_i -\eta_i d\xi_i,
\end{align*}
and the $2$-forms 
\begin{align*}
    \theta_0&=\sum_{i=1}^n d\xi_i\wedge d\eta_i,\\
    \theta_1&= \sum_{i=1}^n dx_i\wedge d\eta_i- dy_i\wedge d\xi_i,\\
    \theta_2&= \sum_{i=1}^n dx_i\wedge dy_i,\\
\end{align*}
are $U(n)$-invariant and hence do not depend on the choice of basis used for their definition. 
The restriction of these forms  to $\CC^n\times S^{2n-1}$ together with the K\"ahler form on $\CC^n$ generate the algebra of translation- and $U(n)$-invariant forms on the  sphere bundle $\CC^n\times S^{2n-1}$, see \cite{bernig_fu11}.  

For non-negative integers $k,q$  with $\max\{0,k-n\}\leq q\leq \frac{k}{2}<n$ Bernig and Fu \cite{bernig_fu11} define the $(2n-1)$-forms
\begin{align*}
\beta_{k,q}&=c_{n,k,q}\beta\wedge\theta_0^{n-k+q}\wedge\theta_1^{k-2q-1}\wedge\theta_2^q,\qquad q<\frac{k}{2},\\
\gamma_{k,q}&=\frac{c_{n,k,q}}{2}\gamma\wedge\theta_0^{n-k+q-1}\wedge\theta_1^{k-2q}\wedge\theta_2^q, \qquad k-n< q,
\end{align*}
where 
$$c_{n,k,q}=\frac{1}{q!(n-k+q)!(k-2q)!\omega_{2n-k}}.$$ 
In terms of integration over the normal cycle, 
$$\mu_{k,q}(K) = \int_{N(K)} \beta_{k,q} =  \int_{N(K)} \gamma_{k,q}.$$

Let $K$ be a convex body with $C^1$ boundary. We denote by $\nu\colon \partial K \to S^{2n-1}$ the Gauss\ map and by $\overline \nu\colon \partial K \to \CC^n\times S^{2n-1}$, $\overline\nu (x) = (x,\nu(x))$,  
the graphing map.
If $K\in C^2_+$, which we assume in the following, then the Gauss map is a $C^1$-diffeomorphism. 
Fix now a point $u\in S^{2n-1}$ and put $x=\nu^{-1}(u)$. By $U(n)$-invariance, we may assume that $u=e_1$. Under this assumption we have at the point $u$,
$$(\overline\nu \circ \nu^{-1})^* dx_i= r^i_{\overline 1} dy_1 +   \sum_{j=2}^n( r^i_{j} dx_j + r^i_{\overline j} dy_j),\qquad 1<i\leq n,$$
$$(\overline\nu \circ \nu^{-1})^*dy_i=  r^{\overline i}_{\overline1} dy_1 + \sum_{j=2}^n(r^{\overline i}_{ j} dx_j + r^{\overline i}_{ \overline j} dy_j),\qquad 1\leq i\leq n,$$
where $(r^i_j)$ is the matrix representing the bilinear form 
$$\left\langle d_{u}\nu^{-1}( X), Y\right\rangle = \overline \nabla ^2 h_K(X,Y) + h_K \left\langle X,Y\right\rangle$$
with $X\in T_{u} S^{2n-1}$ and $Y\in T_{x}\partial K\cong T_{u} S^{2n-1}$. Moreover,
$$(\overline\nu \circ \nu^{-1})^*\alpha=0, \quad (\overline\nu \circ \nu^{-1})^*\beta=(\overline\nu \circ \nu^{-1})^*dy_1, \quad (\overline\nu \circ \nu^{-1})^*\gamma=(\overline\nu \circ \nu^{-1})^*d\eta_1=dy_1,$$
and 
$$(\overline\nu \circ \nu^{-1})^*d\xi_1=0, \qquad (\overline\nu \circ \nu^{-1})^*d\xi_i =  dx_i, \quad (\overline\nu \circ \nu^{-1})^*d\eta_i= dy_i$$
for $1<i\leq n$.

\begin{lemma}\label{lem:pullbacks} Suppose $K\in C^2_+$. Then
 \begin{align*}
(\overline\nu \circ \nu^{-1})^* \beta_{1,0}& = \frac{1}{\omega_{2n-1} } r_{\overline 1}^{\overline 1} \; du,   \\
(\overline\nu \circ \nu^{-1})^* \gamma_{1,0}& = \frac{1}{2(n-1)\omega_{2n-1} }  \sum_{i=2}^n\left(r_{i}^{i}+ r_{ \overline i}^{\overline i}\right) \; du,\\
(\overline\nu \circ \nu^{-1})^* \beta_{2,0}& =  \frac{1}{2\omega_{2n-2}}  \sum_{i=2}^n\left(r_{\overline 1 i}^{\overline1 i}+ r_{\overline 1 \overline i}^{\overline1\overline i}\right) \; du,  \\
(\overline\nu \circ \nu^{-1})^* \gamma_{2,0}& = \frac{1}{2(n-2)\omega_{2n-2}} \sum_{2\leq i<j\leq n} \left(r_{ij}^{ij}+ r_{i\overline j}^{i\overline j} +r_{\overline i j}^{\overline i j} + r_{\overline i \overline j}^{\overline i \overline j} -2 r_{j\overline j}^{i \overline i}\right) \; du,\\
(\overline\nu \circ \nu^{-1})^* \gamma_{2,1}& = \frac{1}{2(n-1) \omega_{2n-2}}   \sum_{2\leq i,j\leq n} r_{j\overline j}^{i \overline i} \; du,
\end{align*}
where $du$ denotes the Riemannian volume form.
\end{lemma}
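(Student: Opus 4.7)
The plan is to verify each identity by a direct pointwise computation, reduced by $U(n)$-equivariance to a single base point: since the Bernig--Fu forms $\beta_{k,q},\gamma_{k,q}$, the graphing map $\overline\nu$, and the Riemannian volume $du$ are all $U(n)$-invariant, it suffices to work at the single point $u=e_1$ chosen in the excerpt. At this point the pullback formulas of $\alpha,\beta,\gamma,dx_i,dy_i,d\xi_i,d\eta_i$ are already recorded; I will additionally use $(\overline\nu\circ\nu^{-1})^*dx_1 = 0$ (because $T_x\partial K$ is orthogonal to the normal $e_1$) and $(\overline\nu\circ\nu^{-1})^*d\xi_1 = 0$ (which follows from differentiating $\sum\xi_i^2+\eta_i^2=1$ at $e_1$).

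Writing $\omega^i := (\overline\nu\circ\nu^{-1})^*dx_i$ and $\omega^{\overline i} := (\overline\nu\circ\nu^{-1})^*dy_i$ for $i\ge 2$, I would first record that all $i=1$ summands in $\theta_0,\theta_1,\theta_2$ vanish under pullback, so that
\begin{align*}
(\overline\nu\circ\nu^{-1})^*\theta_0 &= \textstyle\sum_{i=2}^n dx_i\wedge dy_i,\\
(\overline\nu\circ\nu^{-1})^*\theta_1 &= \textstyle\sum_{i=2}^n\bigl(\omega^i\wedge dy_i + dx_i\wedge \omega^{\overline i}\bigr),\\
(\overline\nu\circ\nu^{-1})^*\theta_2 &= \textstyle\sum_{i=2}^n \omega^i\wedge \omega^{\overline i}.
\end{align*}
Next I would expand each $(\overline\nu\circ\nu^{-1})^*\beta_{k,q}$ and $(\overline\nu\circ\nu^{-1})^*\gamma_{k,q}$ by multilinearity and extract the coefficient of $du = dy_1\wedge dx_2\wedge dy_2\wedge\cdots\wedge dx_n\wedge dy_n$. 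The only possible source of $dy_1$ is the lead $\beta$ (whose $dy_1$-coefficient is $r^{\overline 1}_{\overline 1}$) or $\gamma$ (which pulls back to exactly $dy_1$); once $dy_1$ is supplied, the remaining factors must saturate the $(2n-2)$-plane spanned by $dx_i,dy_i$ with $i\ge 2$, reducing each coefficient to a signed sum of minors $r^{i_1\ldots i_k}_{j_1\ldots j_k}$.

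Case by case: $\beta_{1,0}=c_{n,1,0}\beta\wedge\theta_0^{n-1}$ and $\gamma_{1,0}=\tfrac12 c_{n,1,0}\gamma\wedge\theta_0^{n-2}\wedge\theta_1$ yield the stated trace expressions directly; $\beta_{2,0}=c_{n,2,0}\beta\wedge\theta_0^{n-2}\wedge\theta_1$ combines $\theta_1$ with the $dx_j,dy_j$ components of $\beta$ to produce the $2\times 2$ minors $r^{\overline 1 i}_{\overline 1 i}$ and $r^{\overline 1\overline i}_{\overline 1\overline i}$; $\gamma_{2,1}=\tfrac12 c_{n,2,1}\gamma\wedge\theta_0^{n-2}\wedge\theta_2$ produces the minors $r^{i\overline i}_{j\overline j}$ from the $\omega^i\wedge\omega^{\overline i}$ pairings; and $\gamma_{2,0}=\tfrac12 c_{n,2,0}\gamma\wedge\theta_0^{n-3}\wedge\theta_1^2$ requires expanding $\theta_1^2$ as an ordered double sum, whose off-diagonal contributions $\ell\neq m$ yield the four diagonal minors $r^{ij}_{ij}+r^{i\overline j}_{i\overline j}+r^{\overline i j}_{\overline i j}+r^{\overline i\overline j}_{\overline i\overline j}$ while the diagonal contributions $\ell=m$ produce the mixed term $-2r^{i\overline i}_{j\overline j}$ (after dividing by the prefactor $\tfrac12$). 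The constants in the statement then drop out once one collects the multinomial prefactors $(n-k+q-\varepsilon)!$ from the expansion of $\theta_0^{n-k+q-\varepsilon}$ together with $c_{n,k,q}$. The main obstacle is the bookkeeping for $\gamma_{2,0}$: correctly expanding $\theta_1^2$, tracking signs under all rearrangements of the $2$-form factors, and producing the coefficient $-2$ in front of $r^{i\overline i}_{j\overline j}$ from the cancellations between the various $\ell=m$ diagonal terms.
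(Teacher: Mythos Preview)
Your proposal is correct and follows exactly the approach the paper takes: the paper's own proof is the single sentence ``Using the above relations, the proof is a straightforward computation,'' and your outline is precisely a fleshed-out version of that computation. One small caveat in your prose: the factor $dy_1$ can also enter through the $r^i_{\overline 1}dy_1$ components of the $\omega^i,\omega^{\overline i}$ sitting inside $\theta_1$ and $\theta_2$, and it is exactly these cross-terms that supply the off-diagonal entries in the $2\times2$ minors you correctly record; keeping this in mind will make the $\beta_{2,0}$ and $\gamma_{2,0}$ bookkeeping go through cleanly.
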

\begin{proof} Using the above relations, the proof is a straightforward computation.
\end{proof}

\begin{proof}[Proof of Proposition~\ref{prop:Smu_eq_Dmu}]  If $\mu=\int_{N(\; \cdot \;)} \omega$, then, by equation \eqref{eq:def_area_meas},
$$\int_{S^{2n-1}} f \; dS_\mu(K)= \int_{N(K)} \pi_2^*f \, \omega',$$
where $\omega'= T\lrcorner D\omega$ and $D\omega$ denotes the Rumin differential of $\omega$.  
Bernig and Fu have  computed  
$T\lrcorner D\omega$ for each of the invariant forms $\beta_{k,q}$ and $\gamma_{k,q}$, see Propositions~3.4 and 4.6 of \cite{bernig_fu11}.
Using this, we obtain
 $$T\lrcorner D \omega = \frac{\omega_{2n-1}}{\omega_{2n-2}} \left( ((2n-1)c_1-2(n-1)c_0) \beta_{1,0} + 2(n-1)(2c_0-c_1)  \gamma_{1,0} \right)$$
for  $\mu=c_0\mu_{2,0} + c_1\mu_{2,1}$ and
\begin{align}T \lrcorner D\omega = \frac{2\omega_{2n-2}}{\omega_{2n-3}} \Big( &((2n-3)c_1 -2(n-2)c_0) \beta_{2,0}+ (n-2)(3c_0-2c_1) \gamma_{2,0} \label{eq:RuminD} \\
  & \qquad \qquad + (n-1)c_1 \gamma_{2,1} \Big)\notag
\end{align}
for $\mu=c_0\mu_{3,0} + c_1\mu_{3,1}$.
Hence, if $K\in C^2_+$, then \eqref{eq:Smu_eq_Dmu} follows from
$$\int_{N(K)} \pi_2^*f \, \omega'= \int_{S^{2n-1}} f\, (\overline \nu \circ \nu^{-1})^* \omega',$$
and Lemma~\ref{lem:pullbacks}.

If $K$ is a convex body whose support function is merely $C^2$,
then for every $\varepsilon>0$ the Minkowski sum $K_\varepsilon= K + \varepsilon B$ is in $C^2_+$.  Therefore, $S_\mu(K_\varepsilon)=D_\mu(K_\varepsilon)\; du$. Since  
$S_\mu(K_\varepsilon)$ and $D_\mu(K_\varepsilon)$ are polynomial in $\varepsilon>0$ by \eqref{eq:linearityS} and $h_{K_\varepsilon} = h_K + \varepsilon$, letting $\varepsilon\to 0$ concludes the proof.
\end{proof}

For later use we note that \eqref{eq:symmetryS} and \eqref{eq:Smu_eq_Dmu} imply
\begin{equation}\label{eq:Dmu_symmetry} \int f_1 D_\mu(f_2,f_3) \; du = \int f_2 D_\mu(f_1,f_3) \; du \end{equation}
for all $C^2$ functions $f_1$, $f_2$, and $f_3$.

\smallskip

A homogeneous polynomial $P$ of degree $m$ defined on $\RR^n$ is called hyperbolic in direction $a\in\RR^n$ if $P(a)>0$ and for every $x \in\RR^n$ the univariate polynomial 
$$t\mapsto P(ta + x)$$
has exactly $m$ real roots (counted with multiplicities). If $P$ is hyperbolic in direction $a$, then $\Gamma=\Gamma(P,a)$ denotes the connected component of the set $\{P>0\}$ containing $a$ and is called 
the hyperbolicity cone of $P$. It was shown by G\aa rding \cite{garding59} that $\Gamma$ is a convex cone and that $P$ is hyperbolic in direction $b$ for every $b\in \Gamma$. 

For example, $x\mapsto x_1\cdots x_n$ is a homogeneous 
polynomial on $\RR^n$ which is hyperbolic in direction $(1,\ldots, 1)$. Since every symmetric $n\times n$ matrix $A$ has $n$ real eigenvalues, the determinant $A\mapsto \det A$ is hyperbolic in 
direction of the identity matrix.

\begin{proposition} \label{prop:hyperbolicity}
Suppose $\mu=c_0\mu_{3,0} + c_1\mu_{3,1}$. Then
\begin{equation}\label{eq:c_range}2(n-2) c_0< (2n-3)c_{1}\qquad \text{and}\qquad 5c_1<6c_{0},\end{equation}                                                                           
if and only if for every $A, X\in \Sym^2(\RR \oplus \CC^{n-1})$ with $A$ positive definite
\begin{equation} \label{eq:hyperbolicity}  p_\mu(A,X)=0 \quad \Rightarrow\quad p_\mu(X,X)\leq 0\end{equation}
 and equality holds if and only if $X=0$.
\end{proposition}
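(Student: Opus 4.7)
The condition \eqref{eq:hyperbolicity} for a degree-$2$ polynomial is equivalent, via G\aa rding's theory of hyperbolic polynomials specialized to quadratic forms, to the assertion that $p_\mu(\cdot,\cdot)$ is a Lorentzian symmetric bilinear form on $V:=\Sym^2(\RR\oplus\CC^{n-1})$ (signature $(1,\dim V-1)$) with $A$ lying in its positive cone. My plan is to compute the signature of $p_\mu$ and to identify the positive definite matrices as vectors in its time-like cone.

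For sufficiency, I would exploit the invariance of $p_\mu$ under the subgroup $U(n-1)\subset U(n)$ stabilizing $e_1$ (and hence $e_{\overline 1}$). As a real $U(n-1)$-representation, $V$ decomposes as the sum of two trivial one-dimensional summands spanned by $(e_{\overline 1}^*)^{\otimes 2}$ and by the Hermitian metric $g_H$ on $\CC^{n-1}$, together with three pairwise non-isomorphic irreducibles: the standard representation $e_{\overline 1}^*\otimes(\CC^{n-1})^*$, the space of traceless Hermitian forms on $\CC^{n-1}$, and the space of $J$-anti-invariant real symmetric bilinear forms on $\CC^{n-1}$. By Schur's lemma, $p_\mu$ acts as a scalar on each non-trivial irreducible and as a symmetric $2\times 2$ form on the trivial isotypic. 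Evaluating $p_\mu$ on a single representative of each non-trivial summand---for instance $e_{\overline 1}^*\otimes e_2^* + e_2^*\otimes e_{\overline 1}^*$, $(e_2^*)^{\otimes 2}+(e_{\overline 2}^*)^{\otimes 2}-(e_3^*)^{\otimes 2}-(e_{\overline 3}^*)^{\otimes 2}$, and $(e_2^*)^{\otimes 2}-(e_{\overline 2}^*)^{\otimes 2}$---yields, up to positive factors, the scalars $-\alpha$, $-(6c_0-5c_1)$, and $-c_1$, where $\alpha:=(2n-3)c_1-2(n-2)c_0$. Under the strict inequalities \eqref{eq:c_range}, all three of $\alpha$, $6c_0-5c_1$, $c_1$ are positive ($c_1>0$ is forced), so all three scalars are negative. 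A direct computation on the trivial isotypic shows the $2\times 2$ block has signature $(1,1)$ with $I=(e_{\overline 1}^*)^{\otimes 2}+g_H$ time-like.

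To upgrade the identification of the time-like cone from $I$ to every positive definite $A$, I would spectrally decompose $A=\sum_k\lambda_k v_k^{\otimes 2}$ and exploit that rank-one matrices are null for $p_\mu$, giving $p_\mu(A,A)=2\sum_{k<l}\lambda_k\lambda_l\,p_\mu(v_k^{\otimes 2},v_l^{\otimes 2})$. Expanding the $2\times 2$ minors $A^{ij}_{kl}$ of $v^{\otimes 2}+w^{\otimes 2}$ as products of Pl\"ucker coordinates $c_{ij}=v^i w^j-v^j w^i$ yields the identity
\[ \omega_{2n-3}\,p_\mu(v^{\otimes 2}+w^{\otimes 2},v^{\otimes 2}+w^{\otimes 2})=\alpha\,|v\wedge w|^2_{\overline 1}+\beta\,|v\wedge w|^2_{\CC^{n-1}}+(\gamma-\beta)\,\omega(v,w)^2, \]
with $\beta=3c_0-2c_1$, $\gamma=c_1$, and where $|v\wedge w|^2_{\overline 1}$ and $|v\wedge w|^2_{\CC^{n-1}}$ denote the squared norms of the projections of $v\wedge w$ onto $e_{\overline 1}\wedge\CC^{n-1}$ and $\Lambda^2\CC^{n-1}$ respectively. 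For orthonormal $v,w$ one has $|v\wedge w|^2_{\overline 1}+|v\wedge w|^2_{\CC^{n-1}}=1$, and Wirtinger's inequality gives $\omega(v,w)^2\leq|v\wedge w|^2_{\CC^{n-1}}$; an elementary case analysis distinguishing $\gamma\geq\beta$ from $\gamma<\beta$ then shows the right-hand side is strictly positive under \eqref{eq:c_range}. Consequently every positive definite $A$ lies in the connected component of $\{X: p_\mu(X,X)>0\}$ containing $I$, which is the positive cone of the Lorentzian form, and \eqref{eq:hyperbolicity} together with the stated equality characterization follow.

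For necessity, if either strict inequality in \eqref{eq:c_range} fails, the corresponding scalar on one of the three non-trivial irreducibles becomes non-negative. Any non-zero element $X$ of that isotypic then satisfies $p_\mu(I,X)=0$ by $U(n-1)$-averaging (since $I$ has zero component outside the trivial isotypic), while $p_\mu(X,X)\geq 0$, contradicting \eqref{eq:hyperbolicity}. The principal technical obstacle is the Pl\"ucker-coordinate rearrangement yielding the clean identity above, which is what reduces the positivity of $p_\mu$ on positive definite matrices to a simple two-parameter inequality controlled by the K\"ahler angle of the plane spanned by $v$ and $w$.
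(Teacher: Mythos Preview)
Your proposal is correct and takes a genuinely different route from the paper. The paper proceeds more computationally: it uses $\{1\}\times U(n-1)$-invariance only to zero out certain off-diagonal entries of $X$, then observes that $p_\mu(X)\leq p_\mu(\tilde X)$ with $\tilde X$ the diagonal part, eliminates $X^{\overline 1}_{\overline 1}$ via the linear constraint $p_\mu(I,X)=0$, and finally computes the three eigenvalues of the Hessian of the resulting quadratic form in the remaining diagonal entries (these eigenvalues are $-c_1$, $5c_1-6c_0$, and a third quantity that is automatically negative under \eqref{eq:c_range}). To put positive definite $A$ in the hyperbolicity cone it argues directly that $p_\mu(A)>0$ after $U(n-1)$-reduction, and it handles the equality case by Lagrange multipliers. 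Your approach replaces all of this with a cleaner structural argument: Schur's lemma on the $U(n-1)$-isotypic decomposition gives the Lorentzian signature at once, and the equality characterization then follows immediately since the $p_\mu$-orthogonal complement of a time-like vector is negative definite. What your approach buys beyond elegance is the Pl\"ucker--Wirtinger identity, which expresses $p_\mu$ on a rank-two matrix purely in terms of the K\"ahler angle of the corresponding $2$-plane; this makes the connection to the ambient hermitian geometry explicit and shows transparently why the positive definite cone sits inside the time-like cone. The paper's approach, by contrast, is more elementary in that it avoids representation theory entirely and yields the governing constants as explicit Hessian eigenvalues.
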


\begin{proof}
We show first that \eqref{eq:c_range} implies 
\begin{equation}\label{eq:discriminant}p_\mu(I,X)=0 \quad \Rightarrow\quad p_\mu(X,X)\leq 0\end{equation}
for every $X\in \Sym^2(\RR \oplus \CC^{n-1})$, where  $I$ denotes the bilinear form corresponding to the identity matrix $(\delta^i_j)$.
Note that since $p_\mu(I)>0$ and
$$p_\mu(t I+X)= p_\mu(X,X) + 2 tp_\mu(I,X) + t^2 p_\mu(I,I),$$
the claim \eqref{eq:discriminant} is equivalent to the statement that $p_\mu$ is hyperbolic in direction $I$. Indeed, if $p_\mu$ is hyperbolic in direction $I$, then \eqref{eq:discriminant} holds. Conversely, given any 
$X\in \Sym^2(\RR \oplus \CC^{n-1})$, put $X'= X-\lambda I$ with $\lambda = p_\mu(I,X)/p_\mu(I,I)$. Thus, $p_\mu(I,X')=0$ and hence, by \eqref{eq:discriminant}, 
\begin{equation}\label{eq:garding_ineq_I} p_\mu(X',X')= p_\mu(X,X)- \frac{p_\mu(I,X)^2}{ p_\mu(I,I)}\leq 0.\end{equation}                                                                                                         
This shows that $p_\mu$ is hyperbolic in direction $I$.

By the $\{1\}\times U(n-1)$-invariance of \eqref{eq:discriminant},
we may assume that  $X=(X^i_j)$ satisfies
$$\left\{\begin{array}{l}
          X^{ 2}_{\overline 2} =0,\\
	  X^{2}_{\overline 3}=X^{\overline 2}_{\overline 3} = X^{3 }_{\overline 3}=0, \\
    \phantom{ X^{2}_{\overline 3}a} \vdots \\
	  X^{2}_{\overline n}=X^{\overline 2}_{\overline n} =\cdots = X^{n-1}_{\overline{ n}}= X^{\overline{n-1} }_{\overline{ n}}=X^{n }_{\overline n}=0. \\
         \end{array}\right.$$
In this case, the minors $X_{j\overline j}^{i\overline i}$ vanish for $i\neq j$  and hence
$$p_\mu(X)\leq p_\mu(\tilde X),$$
where $(\tilde X^i_j)$ has the same diagonal entries as $( X^i_j)$, but all off-diagonal entries are $0$.

The condition $p_\mu(I,X)=0 $ means explicitly that 
\begin{align*} p_\mu(I,X)=  \frac{1}{\omega_{2n-3} } &\bigg[ (n-1)((2n-3)c_1-2(n-2) c_0)  X^{\overline 1}_{\overline 1} \\ 
 &  \qquad \qquad + (2(n-2) c_0-(n-3)c_1) \sum_{i=2}^n\left(X_{i}^{i} + X_{\overline i}^{\overline i}\right)\bigg]=0
\end{align*}
and, hence, 
$$X_{\overline 1}^{\overline 1} = -\frac{2(n-2) c_0-(n-3)c_1}{(n-1)((2n-3)c_1-2(n-2) c_0)} \sum_{i=2}^n\left(X_{i}^{i} + X_{\overline i}^{\overline i}\right).$$
Thus $p_\mu(\tilde X)$ is in fact a homogeneous polynomial of degree $2$ in the variables $ X_2^2, X_{\overline 2}^{\overline 2},\ldots, X_n^n, X_{\overline n}^{\overline n}$, 
\begin{align*}\omega_{2n-3} p_\mu(\tilde X)
    & =\frac{a}{2} \sum_{i=2}^n  \left( (X^i_i)^2+ (X^{\overline i}_{\overline i})^2 \right) + b \sum_{i=2}^n  X^i_i X^{\overline i}_{\overline i} \\
& \qquad\qquad \qquad + c \sum_{2\leq i< j\leq n} \left( X^i_i X^{ j}_{ j} + X^i_i X^{\overline j}_{\overline j} +  X^{\overline i}_{\overline i} X^j_j + X^{\overline i}_{\overline i} X^{\overline j}_{\overline j}  \right)\\
& =q(X_2^2, X_{\overline 2}^{\overline 2},\ldots, X_n^n, X_{\overline n}^{\overline n})
\end{align*}
with
$$ a = -\frac{2(2(n-2) c_0-(n-3)c_1)}{(n-1)}, \quad b  = c_1+a  , \quad c = (3c_0-2c_1)+ a.$$

In order to show $q\leq 0$, it will be sufficient to compute the eigenvalues of the Hessian of $q$. Since 
$$\Hess q =\left(\begin{array}{ccccccccc} a & b & c  & c & &  \multicolumn{4}{c}{\multirow{4}{*}{c}}   \\
		  b & a & c & c &  &  & & &\\
		  c & c	& a & b &  &   &  & &\\
		  c & c & b & a &  &   & & & \\
		   & &  &   & \ddots & & & &  \\
		  \multicolumn{4}{c}{\multirow{4}{*}{c}}      &	&  a & b & c & c \\
							 & & & & &  b & a & c & c \\
							 & & & & &  c &  c &  a & b \\
							 & & & & &  c &  c &  b & a\\
		  
  \end{array}\right),$$
we conclude that $\Hess q$ has the eigenvalues 
\begin{align*}
 a-b & = -c_1,\\
 a+ b-2c& = 5c_1-6c_0,\\
 a+ b +2(n-1)c & =-\frac{2(n+1)(n-3)}{n-1}c_0-\frac{3n+5}{n-1}c_1,
\end{align*}
with multiplicities $n-1$, $n-2$, and $1$. By assumption \eqref{eq:c_range}, all eigenvalues are negative and hence $q\leq 0$.

Next, we claim that $p_\mu(A)>0$ if $A$ is positive definite. Again by  $\{1\}\times U(n-1)$-invariance, we may assume that  $A_{j\overline j}^{i\overline i}=0$ for $i\neq j$. Since 
$(2n-3)c_1-2(n-2) c_0>0$ and $3c_{1}-2c_{0}>0$, we conclude that
$p_\mu(A)>0$. Thus every positive definite bilinear form $A$ is contained in the hyperbolicity cone $\Gamma(p_\mu,I)$ and hence $p_\mu$ is hyperbolic in direction $A$. This implies \eqref{eq:hyperbolicity}.

Consider now the problem of maximizing $p_\mu(X)$ subject to the condition $g(X):=p_\mu(A,X)=0$. By the method of Lagrange multipliers, if $X$ maximizes $p_\mu$, then there exists some number $\lambda$ such that
$$\nabla p_\mu(X) = \lambda \nabla g(X)\quad \text{and} \quad g(X)=0.$$
A straightforward computation shows that $\nabla p_\mu(X) = \lambda \nabla g(X)$ is equivalent to
$$2 X + \lambda A=0.$$
Since $p_\mu(A,A)>0$, $g(X)=0$ implies $\lambda=0$ and hence $X=0$. 

Conversely, to see that \eqref{eq:hyperbolicity} implies \eqref{eq:c_range}, choose $A=I$ and plug  $X$ diagonal or of rank at most $2$ into \eqref{eq:hyperbolicity}.
\end{proof}

For later use we remark that \eqref{eq:hyperbolicity} is equivalent to the statement that for every $A, X\in \Sym^2(\RR \oplus \CC^{n-1})$ with $A$ positive definite
\begin{equation}\label{eq:garding_ineq}p_\mu(A,X)^2\geq p_\mu(A)p_\mu(X)\end{equation}                                                                        
and equality holds if and only if there exists $\lambda\in \RR$ such that $X=\lambda A$. Indeed, the proof of the equivalence of \eqref{eq:discriminant} and \eqref{eq:garding_ineq_I} with $I$ replaced by $A$
yields the equivalence of  \eqref{eq:hyperbolicity} and \eqref{eq:garding_ineq}.

Let $M$ be a smooth manifold. A linear map $D\colon C^2(M) \to C(M)$ is called linear differential operator of order at most $2$ if for every coordinate neighborhood $U$ in $M$
with local coordinates $(x^1,\ldots, x^n)$ there exist continuous functions  $a^{ij}=a^{ji}$, $b^i$, $c$ such that given any $f\in C^2(M)$ 
 the restriction $D f|_U$ to $U$  is given by
\begin{equation}\label{eq:2nd_order_operator}D f|_U= \sum_{i,j=1}^n a^{ij} \frac{\partial^2 f}{\partial x^i \partial x^j} + \sum_{i=1}^n b^i \frac{\partial f}{\partial x^i} + c f. \end{equation}
The operator $D$ is called elliptic if 
$$ a^{ij}\xi_{i} \xi_{j} \neq 0$$
for every $\xi\in \RR^n$ with $\xi\neq 0$, see [Aubin, p. 125]. 
The principal symbol of $D$ is the contravariant, symmetric tensor $\sigma(D)^{ij} =a^{ij}$.

\begin{corollary}\label{cor:elliptic_operator} Suppose $\mu=c_0\mu_{3,0} + c_1\mu_{3,1}$,
$$2(n-2)c_0< (2n-3)c_{1}\qquad \text{and}\qquad 5c_1<6c_{0},$$
and $M\in C^2_+$.
Then the operator $f\mapsto D_{\mu,M}f:= D_\mu(M,f)$ is a
formally self-adjoint,  elliptic linear  differential operator of order at most $2$. Moreover,                                                                                                                                                 
$$D_\mu(M,f)=0\quad \Rightarrow\quad D_\mu(f,f)\leq 0$$
and equality holds if and only if $f$ is the restriction of a linear function to the unit sphere.
\end{corollary}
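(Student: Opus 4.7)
The plan is to verify the four claims in sequence: order at most two, formal self-adjointness, ellipticity, and the final hyperbolicity-type statement.  Three of the four follow almost directly from material already established in the paper; the only genuine computation is the symbol calculation needed for ellipticity.

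First, since $p_\mu$ is bilinear, the map $f\mapsto D_{\mu,M}f = p_\mu(A,\overline\nabla^2 f + f\overline g)$ with $A=\overline\nabla^2 h_M + h_M\overline g$ is linear in $f$; only second derivatives of $f$ enter (through $\overline\nabla^2 f$), so in local coordinates it takes the form \eqref{eq:2nd_order_operator}.  Setting $f_3 = h_M$ in \eqref{eq:Dmu_symmetry} gives
$$\int_{S^{2n-1}} f_1 \, D_{\mu,M}f_2 \, du = \int_{S^{2n-1}} f_2 \, D_{\mu,M}f_1 \, du$$
for all $C^2$ functions $f_1,f_2$, which is exactly formal self-adjointness.

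Second, for ellipticity, fix $u\in S^{2n-1}$ and a nonzero $\xi\in T^*_u S^{2n-1}$.  Using $\overline g$ to identify tangent with cotangent, the principal symbol is
$$\sigma(D_{\mu,M})(\xi) = p_\mu(A(u),\xi\otimes\xi),$$
where $\xi\otimes\xi\in \Sym^2(T_u S^{2n-1})$ has rank one.  Since $M\in C^2_+$, $A(u)$ is positive definite, so Proposition~\ref{prop:hyperbolicity} applies.  Were $\sigma(D_{\mu,M})(\xi)=0$, it would force $p_\mu(\xi\otimes\xi,\xi\otimes\xi)\leq 0$ with equality iff $\xi\otimes\xi=0$.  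Inspection of the defining formula of $p_\mu$ in the degree-three case shows that every term is a $2\times 2$ minor of its argument; rank-one matrices have all $2\times 2$ minors equal to zero, so $p_\mu(\xi\otimes\xi,\xi\otimes\xi)=0$.  The equality case of Proposition~\ref{prop:hyperbolicity} then forces $\xi\otimes\xi=0$, contradicting $\xi\neq 0$.

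Finally, suppose $D_\mu(M,f)(u)=p_\mu(A(u),B(u))=0$ for every $u\in S^{2n-1}$, where $B=\overline\nabla^2 f + f\overline g$.  Because $A(u)$ is positive definite, applying Proposition~\ref{prop:hyperbolicity} pointwise gives $D_\mu(f,f)(u) = p_\mu(B(u),B(u))\leq 0$ at every $u$, with equality iff $B(u)=0$.  Since $\overline\nabla^2 f + f\overline g = 0$ on the sphere characterizes restrictions of linear functionals on $\CC^n$ to $S^{2n-1}$ (as a short computation in normal coordinates confirms), this completes the proof.

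The main technical point is the ellipticity step: one needs to use not only the inequality side of Proposition~\ref{prop:hyperbolicity} but also its equality case, together with the observation that $p_\mu$ is built entirely from $2\times 2$ minors and therefore vanishes on rank-one tensors.
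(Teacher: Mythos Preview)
Your proof is correct and follows essentially the same approach as the paper: self-adjointness via the symmetry relation \eqref{eq:Dmu_symmetry}, ellipticity via Proposition~\ref{prop:hyperbolicity} applied to the rank-one tensor $\xi\otimes\xi$, and the pointwise application of Proposition~\ref{prop:hyperbolicity} for the final implication. The only notable difference is that for the characterization of solutions to $\overline\nabla^2 f + f\overline g = 0$ the paper takes the trace to obtain $\Delta f + (2n-1)f = 0$ and invokes the spectrum of the Laplacian, whereas you appeal to a direct computation---both are standard, and your explicit observation that $p_\mu$ vanishes on rank-one tensors (being a sum of $2\times 2$ minors) makes the ellipticity step, which the paper leaves implicit, more transparent.
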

\begin{proof}
The symmetry of $\mu(K,L,M)$ implies that the operator $D_{\mu,M}$ is formally self-adjoint. Indeed, every $C^2$ function  can be expressed as the difference of two $C^2$ support functions, and hence, for
 $f=h_{K_1} - h_{K_2}$ and $g=h_{L_1}-h_{L_2}$ 
\begin{align*}(f,D_\mu(M, g) )_{L^2}&= \mu(M,K_1,L_1)-\mu(M,K_1,L_2)-\mu(M,K_2,L_1)+ \mu(M,K_2,L_2)\\
  &=  (D_\mu(M ,f),g)_{L^2}.
\end{align*}
In order to prove ellipticity, fix a point $p\in S^{2n-1}$ and choose normal coordinates $x^1,\ldots, x^{2n-1}$ for $p$ such that 
$\frac{\partial }{\partial x^1}, \ldots, \frac{\partial }{\partial x^{2n-1}}$ is a basis of the form $\{e_{\overline 1},e_{2},e_{\overline 2},\ldots, e_{n},e_{\overline n}\}$ for $T_pS^{2n-1}$. At the point $p$ we have
$$\sigma(D_{\mu,M})^{ij}\xi_{i} \xi_{j}= p_{\mu}(\overline\nabla^2 h_{M}+h_{M}\overline g, \xi^*\xi)$$
and the last expression is, by Proposition~\ref{prop:hyperbolicity}, zero if and only if $\xi=0$. Hence $D_{\mu,M}$ is elliptic.

Since every linear functional is the support function of some point $P$, \eqref{eq:val_kernel} yields $D_\mu(M,f)= D_\mu(f,f)=0$. Conversely, if $D_\mu(M,f)= D_\mu(f,f)=0$ then $\overline \nabla^2 f+f \overline g=0$
  by Proposition~\ref{prop:hyperbolicity}.
In particular, $\tr_{\overline g}(\overline \nabla^2 f+f \overline g)=\Delta f+(2n-1)f=0$,
 that is, $f$ is an eigenfunction of the Laplace-Beltrami operator on the sphere with eigenvalue $-2n+1$. As is well known, see \eqref{eq:specLaplacian},
 this is possible if and only if $f$ is the restriction of a linear functional to the unit sphere.
\end{proof}

In the following $JN$ will denote the canonical vector field on $S^{2n-1}\subset \CC^n$ given by $JN(u)= Ju$. Since the trajectories of the vector field $JN$ are geodesics,
  $\overline{\nabla}_{JN}{JN}=0$ and, hence, $\overline{\nabla}^2f (JN,JN)= JN(JN f)$.
Consequently, we have for $\mu=c_0\mu_{2,0} + c_1\mu_{2,1}$,
\begin{equation} \label{eq:Dmu2}D_{\mu}=\frac{1}{\omega_{2n-2}} \Big[ 2n(c_1-c_0)JN(JN )+(2c_0-c_1)\Delta +(2(n-1)c_0+c_{1})\Big]. \end{equation}
Similarly, if $M=B$ is the unit ball in $\CC^n$, we have
\begin{equation}\label{eq:DmuB}D_{\mu,B}=\frac{1}{\omega_{2n-3}} \Big[ (a-b)JN(JN )+b\Delta +a+2(n-1)b\Big]
\end{equation}
with
$$a= (n-1)((2n-3)c_1-2(n-2) c_0) \qquad\text{and}\qquad  b=2(n-2) c_0-(n-3)c_1.$$

We denote by $\calH_m=\calH_m(S^{2n-1})$, $n\geq 2$, the space of spherical harmonics of degree $m$, i.e.~the space of restrictions of harmonic, $m$-homogeneous polynomials 
$P\in \CC[x_1,y_1,\ldots, x_n,y_n]$
 to the unit sphere. It is well known that
\begin{equation}\label{eq:specLaplacian}\Delta f=  -m(m+2n-2) f \qquad \text{for} \qquad f\in \calH_m(S^{2n-1}).\end{equation}                                                                                                           
For non-negative integers $k,l$  we denote by $\calH_{k,l}$ the space of harmonic polynomials $P\in \CC[x_1,y_1,\ldots, x_n,y_n]=\CC[z_1,\overline z_1,\ldots, z_n,\overline z_n]$ restricted to the unit sphere for which
$$P(\lambda z)= \lambda^k \overline\lambda^l P(z) \quad\text{for}\quad \lambda\in \CC.$$
Clearly, $\calH_{k,l}\subset \calH_{k+l}$. The space $\calH_{k,l}$ is called the space of spherical harmonics of bi-degree $(k,l)$. Under the canonical action of the unitary group $U(n)$ on $L^2(S^{2n-1})$, the spaces
$\calH_{k,l}$ are invariant and irreducible. In particular, we have the decompositions 
$$\calH_m=\bigoplus_{k+l=m} \calH_{k,l} \qquad \text{and}\qquad  L^2(S^{2n-1})= \bigoplus_{k,l} \calH_{k,l}$$
into pairwise orthogonal, irreducible subspaces.

Fix some point $e\in S^{2n-1}$. A function $P$ is called a spherical function with respect to $U(n-1)$ if $P$ is contained in some $\calH_{k,l}$, $P$ is $U(n-1)$-invariant, and $P(e)=1$.
The existence of a unique spherical function in every $\calH_{k,l}$ follows from Frobenius reciprocity and the fact that irreducible $U(n)$-representations decompose with multiplicity $1$ under $U(n-1)$, see
\cite{knapp02}*{p. 569}.
 One can show that the unique spherical 
function in $\calH_{k,l}$, denoted by $P_{k,l}( (w,e) )$, is given by
$$\begin{array}{ll}
 P_{k,l}( re^{i\theta}) =  (re^{i\theta})^{k-l}  Q_l(k-l, n-2, r^2)  & \qquad \text{if}\  k\geq l\text{; and}\\
 P_{k,l}  =  \overline{P_{l,k}}  & \qquad \text{if}\  l>k.\\
\end{array}$$
Here $\{Q_l(a,b,t)\colon l=0,1,2,\ldots\}$ is the complete set of polynomials in $t$ ($Q_l$ has degree $l$) orthogonal on $[0,1]$ with weight $t^a(1-t)^b\; dt$ and satisfying $Q_l(a,b,1)=1$, $a>-1$, $b>-1$.

The above description of spherical functions is essentially due to Johnson and Wallach \cite{johnson_wallach77}*{Theorem 3.1 (3)}; see also \cite{quinto87} and the references therein for more information 
on these spherical functions.

\begin{lemma}\label{eigenv:JN} For $f\in \calH_{k, l}(S^{2n-1})$,
 $$JN(JN f) = -(k-l)^2f.$$
\end{lemma}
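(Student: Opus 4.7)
The plan is to identify the flow of the vector field $JN$ on $S^{2n-1}$ with the standard circle action, and then use the bi-degree homogeneity property of polynomials representing elements of $\calH_{k,l}$ to read off the eigenvalue by two differentiations.

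First I would verify that the flow $\phi_t$ of $JN$ on $S^{2n-1}$ is given by $\phi_t(u) = e^{it}u$, where multiplication is in $\CC^n$. Indeed, the curve $t \mapsto e^{it}u$ lies on $S^{2n-1}$ and its velocity at $t=0$ equals $iu = Ju = JN(u)$, so by uniqueness this is the integral curve through $u$. Consequently, for any $C^1$ function $f$ on $S^{2n-1}$,
\begin{equation*}
(JN f)(u) = \left.\frac{d}{dt}\right|_{t=0} f(e^{it}u).
\end{equation*}

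Next, let $f \in \calH_{k,l}$ be represented by a harmonic polynomial $P \in \CC[z_1,\overline z_1,\ldots,z_n,\overline z_n]$ satisfying $P(\lambda z) = \lambda^k \overline \lambda^l P(z)$ for all $\lambda \in \CC$. Taking $\lambda = e^{it}$ gives
\begin{equation*}
f(e^{it}u) = P(e^{it}u) = e^{ikt}e^{-ilt}P(u) = e^{i(k-l)t} f(u).
\end{equation*}
Differentiating once at $t=0$ yields $(JN f)(u) = i(k-l)f(u)$, so $JN f = i(k-l) f$. Applying $JN$ again and using linearity,
\begin{equation*}
JN(JN f) = i(k-l)\,(JN f) = (i(k-l))^2 f = -(k-l)^2 f,
\end{equation*}
as claimed.

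There is no serious obstacle here: once the identification of the flow of $JN$ with the canonical $S^1$-action on $S^{2n-1}$ is made, the statement is an immediate consequence of the definition of $\calH_{k,l}$ as a bi-graded piece under this action. The only subtle point is to confirm that $JN$ indeed generates multiplication by $e^{it}$ rather than $e^{-it}$, which is simply a matter of matching signs with the convention $JN(u) = Ju$ used throughout the paper.
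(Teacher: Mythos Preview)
Your proof is correct and rests on the same core observation as the paper's: the integral curves of $JN$ are $t\mapsto e^{it}u$, and along these curves an element of $\calH_{k,l}$ picks up a factor $e^{i(k-l)t}$, so differentiating twice gives $-(k-l)^2$.

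The only difference is in how this factor is extracted. The paper reduces to the spherical function $P_{k,l}((\,\cdot\,,e))$ via the $U(n)$-intertwining property of $JN$, and then reads off the phase from the explicit formula $P_{k,l}(re^{i\theta})=(re^{i\theta})^{k-l}Q_l(k-l,n-2,r^2)$. You instead invoke the defining bi-homogeneity $P(\lambda z)=\lambda^k\overline{\lambda}^l P(z)$ with $\lambda=e^{it}$ directly, which is more economical: it avoids both the intertwining reduction and any appeal to the structure of spherical functions. Your route is the cleaner one here; the paper's detour through spherical functions is unnecessary for this lemma, though consistent with the machinery already set up in that section.
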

\begin{proof} Fix $e=e_1$. Since $f\mapsto JN f$ is a $U(n)$-intertwining operator, it will be sufficient to compute $JN f$ for $f=P_{k,l}((\;\cdot\;, e))$. Let $a,b\in \CC$ be such that $|a|^2+|b|^2=1$ and $0<|a|<1$, and 
choose $z\in S^{2n-1}$ such that $e \perp z$. Put $w= a e + b z$ and let $\gamma \colon \RR \to S^{2n-1}$ be the curve $\gamma(t) = \cos(t) w + \sin(t) Jw $. Then $(\gamma(t),e)= (\cos t + i\sin t)a=a e^{it}$, $\gamma(0)=w$, 
$\gamma'(0)= Jw= JN_w$, and 
$$ JN f (w)= \dt P_{k,l}((\gamma(t),e)) = i (k-l) f(w).$$
\end{proof}

For the proof of Theorems \ref{th1} and \ref{th2}, we need the following  description of the spectrum of the differential operators  \eqref{eq:Dmu2} and \eqref{eq:DmuB}.

\begin{proposition}\label{prop:tildeD}
Let $D\colon C^2(S^{2n-1})\to C(S^{2n-1})$ be the differential operator defined in  \eqref{eq:Dmu2} or \eqref{eq:DmuB} and denote by $D^\CC$ its extension to $\CC$-valued functions. If 
$$2(n-1) c_0\leq (2n-1)c_1\qquad \text{and} \qquad (4n+1)c_1 < 2(3n+1) c_0$$
or 
$$2(n-2)c_0 \leq (2n-3)c_1\qquad \text{and}\qquad (4n^2-9n -3) c_1< 2(3n^2-5n-2) c_0,$$
respectively, then $D^\CC$ has precisely one positive eigenvalue, which corresponds to the $1$-dimensional space of constant functions, and 
the kernel of $D^\CC$ consists of the restriction of linear functionals to the unit sphere.

Moreover, if $0<  (4n+1)c_1 = 2(3n+1) c_0$, then  the kernel of $D^\CC$ is $\calH_{1,0} \oplus \calH_{0,1} \oplus \calH_{1,1}$. 
\end{proposition}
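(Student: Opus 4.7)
The key structural observation is that $D^{\CC}$ is assembled from $\Delta$, $JN(JN)$, and the identity, each of which is $U(n)$-equivariant. Consequently $D^{\CC}$ commutes with the unitary action on $L^{2}(S^{2n-1})=\bigoplus_{k,l}\calH_{k,l}$ and, by Schur's lemma, acts on each irreducible component $\calH_{k,l}$ as multiplication by a scalar $\lambda_{k,l}$. Combining \eqref{eq:specLaplacian} with Lemma~\ref{eigenv:JN}, the operator \eqref{eq:Dmu2} yields
$$\omega_{2n-2}\,\lambda_{k,l}=2n(c_{0}-c_{1})(k-l)^{2}-(2c_{0}-c_{1})(k+l)(k+l+2n-2)+2(n-1)c_{0}+c_{1},$$
and an analogous expression $\tilde\lambda_{k,l}$ holds for \eqref{eq:DmuB}. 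The plan is then to show $\lambda_{0,0}>0$, $\lambda_{1,0}=\lambda_{0,1}=0$, and $\lambda_{k,l}<0$ for all remaining $(k,l)$ --- unless $(4n+1)c_{1}=2(3n+1)c_{0}$, in which case $\lambda_{1,1}$ vanishes as well (with the analogous statement for the second operator).

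Direct substitution gives $\omega_{2n-2}\,\lambda_{1,1}=(4n+1)c_{1}-2(3n+1)c_{0}$ and $\omega_{2n-3}\,\tilde\lambda_{1,1}=(4n^{2}-9n-3)c_{1}-2(3n^{2}-5n-2)c_{0}$, so the boundary inequalities in the hypotheses are \emph{precisely} $\lambda_{1,1}\le 0$ and $\tilde\lambda_{1,1}\le 0$. A short preliminary deduction shows that the hypotheses force $c_{0}\ge 0$ and $c_{1}\ge 0$; from these, $\omega_{2n-2}\,\lambda_{2,0}=2(n-1)c_{0}-(4n-1)c_{1}<0$ follows from the first hypothesis together with $c_{1}>0$, and similarly for the second operator.

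For the remaining $(k,l)$ I would parametrize by $s=k+l$ and $t=(k-l)^{2}\in[0,s^{2}]$ and observe that $\lambda_{s,t}$ is affine in $t$, with slope proportional to $c_{0}-c_{1}$ (and $b-a=2n(n-2)(c_{1}-c_{0})$ for the second operator). This produces a natural dichotomy: if $c_{0}\le c_{1}$ then the maximum over $t$ is attained at $t=0$, which reduces the problem to the diagonal family $s\mapsto\lambda_{s/2,s/2}$; this is a downward parabola with vertex at $s=1-n<0$, hence decreasing for $s\ge 0$, and since its value at $s=2$ is $\lambda_{1,1}\le 0$ every subsequent value is strictly negative. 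If instead $c_{0}>c_{1}$ the maximum is attained at $t=s^{2}$, reducing to the edge family $s\mapsto\lambda_{s,0}$; using $\lambda_{1,0}=0$, one can factor out $(s-1)$ and obtain
$$\omega_{2n-2}\,\lambda_{s,0}=(s-1)\bigl[((2n-2)c_{0}-(2n-1)c_{1})s-(2(n-1)c_{0}+c_{1})\bigr]$$
and, for the second operator, $\omega_{2n-3}\,\tilde\lambda_{s,0}=-(s-1)[a(s+1)+2(n-1)b]$; the bracket in each is strictly negative under the hypotheses, so the entire expression is strictly negative for $s\ge 2$.

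The main technical obstacle I expect is the bookkeeping for the second operator, where one must verify $b=2(n-2)c_{0}-(n-3)c_{1}>0$ from the second hypothesis --- a short but non-trivial algebraic comparison --- before the factorization of $\tilde\lambda_{s,0}$ becomes informative. The need for the edge-versus-diagonal split is also not merely cosmetic: a direct Hessian computation shows that $(k,l)\mapsto\lambda_{k,l}$ has eigenvalues $-4(2c_{0}-c_{1})<0$ and $8n(c_{0}-c_{1})$, so the quadratic fails to be globally concave whenever $c_{0}>c_{1}$, and an approach based on a single concavity argument cannot work.
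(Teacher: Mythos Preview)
Your approach is correct in substance, and differs from the paper's only in how the sign analysis is organised.  The paper introduces $\alpha=2n(c_1-c_0)$, $\beta=2c_0-c_1$, $\gamma=2(n-1)c_0+c_1$ (resp.\ $\alpha=a-b$, $\beta=b$, $\gamma=a+2(n-1)b$) and notices the structural identity $\gamma/\beta=\alpha/\beta+(2n-1)$, valid for \emph{both} operators.  This lets one rewrite the eigenvalue on $\calH_{k,l}$ as
\[
\frac{\alpha}{\beta}\bigl(1-j^{2}\bigr)-\bigl((m+n-1)^{2}-n^{2}\bigr),\qquad m=k+l,\ j=|k-l|,
\]
from which negativity for $m\ge 2$ is seen in one line to be equivalent to $-\beta\le\alpha<(2n+1)\beta$, and these two bounds translate exactly into the two hypotheses.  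So the paper avoids your edge/diagonal dichotomy entirely and treats both operators simultaneously.  Your approach, by contrast, makes the geometry of the eigenvalue surface explicit and explains transparently why a single concavity argument cannot work; it is longer but arguably more discoverable without the key identity.

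Two small sign slips to fix: one computes $b-a=2n(n-2)(c_0-c_1)$, not $(c_1-c_0)$, so the $t$-slope for the second operator has the \emph{same} sign as for the first and your case split goes through unchanged; and in the factorisation $\omega_{2n-3}\tilde\lambda_{s,0}=-(s-1)[a(s+1)+2(n-1)b]$ the bracket is strictly \emph{positive} under the hypotheses (since $a\ge 0$ by the first hypothesis and $b>0$), the leading minus sign then giving the desired negativity.  Neither slip affects the logic.
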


\begin{proof}
To prove the statement for the operator defined in \eqref{eq:Dmu2}, it suffices, by \eqref{eq:specLaplacian} and Lemma~\ref{eigenv:JN}, to show that in the specified range for $c_0$ and $c_1$,
\begin{equation}\label{eq:eigenDmu2}   -2n(c_1-c_0) (k-l)^2 - (2c_0 -c_1 ) (k+l)(k+l+2n-2) +(2(n-1)c_0+c_1) \end{equation}
is negative if $k+l>1$. To this end put $\alpha = 2n(c_1-c_0)$, $\beta= 2c_0-c_1$, $\gamma=2(n-1)c_0+c_1$, $k+l=m$, $j=|k-l|$, and observe that $\beta>0$ and
$$\frac{\alpha}{\beta} + 2n-1 = \frac{\gamma}{\beta}.$$
Thus \eqref{eq:eigenDmu2} becomes 
$$ \frac{\alpha}{\beta} (1-j^2) -((m+n-1)^2-n^2),$$
which is negative for $1<m$ and $0\leq j\leq m$ if and only if $-\beta\leq \alpha <( 2n+1)\beta$.

Finally, defining $\alpha=a-b$, $\beta=b$, $\gamma=a+2(n-1)b$, and  using that
$\frac{\alpha}{\beta} + 2n-1 = \frac{\gamma}{\beta}$,  we conclude as before that $-\beta\leq \alpha <( 2n+1)\beta$. 
\end{proof}

\section{Proof of the inequalities}

In this section we prove that if $\mu$ belongs to the convex cone generated by the valuations $\psi_\theta$ 
with $\theta$ satisfying \eqref{eq:theta3},
then
\begin{equation}\label{eq:ineq3}\mu(f,L,M)^2 \geq \mu(f,f,M) \mu(L,L,M)\end{equation}
for all convex bodies $L,M$  and all differences of support functions $f$. Moreover, we show that 
\begin{equation}\label{eq:ineq2}\mu(f,L)^2 \geq \mu(f,f) \mu(L,L)\end{equation}
whenever $\mu$ belongs to the convex cone generated by the valuations $\varphi_\theta$ with $\theta$ satisfying \eqref{eq:theta2}.

Since every convex body can be approximated in the Hausdorff metric by convex bodies with non-empty interior, $C^\infty$ boundary, and $C^\infty$ support function, it will suffice to prove \eqref{eq:ineq3} and \eqref{eq:ineq2} for such 
convex bodies and smooth functions $f$.

\begin{proposition} \label{prop:equiv0} Let $L,M$ be convex bodies with non-empty interior, $C^\infty$ boundary, and $C^\infty$ support function, and $f$ a $C^\infty$ function on the unit sphere. 
Suppose $\mu$ belongs to the convex cone generated by the valuations $\psi_\theta$ 
with $\theta$ satisfying \eqref{eq:theta3_open}.                                                                    
Then the condition
$$\mu(f,L,M)=0$$
implies 
$$ \mu(f,f,M)\leq 0$$
and equality holds if and only if $f$ is the restriction of a linear functional to the unit sphere.
 
If $\mu$ belongs to the convex cone generated by the valuations $\varphi_\theta$ with
\begin{equation}\label{eq:theta2_open}0< \cos^2\theta< \frac{n+1}{2n},\end{equation}                                                                       
then the condition
$$\mu(f,L)=0$$
implies 
$$ \mu(f,f)\leq 0$$
and equality holds if and only if $f$ is the restriction of a linear functional to the unit sphere. 
\end{proposition}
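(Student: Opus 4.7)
The plan is to follow Aleksandrov's second proof of \eqref{eq:aleksandrov_fenchel} and recast the desired inequality as a claim about the signature of the bilinear form attached to the elliptic operator $D_{\mu,M}$ of Corollary~\ref{cor:elliptic_operator}. The key spectral input is Proposition~\ref{prop:tildeD}, which I would first apply at $M=B$ in the degree $3$ case and to the $M$-independent operator $D_\mu$ in the degree $2$ case, and then propagate to an arbitrary smooth strictly convex $M$ via a homotopy exploiting the constancy of the kernel.

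First, using Proposition~\ref{prop:Smu_eq_Dmu} together with the symmetry identity \eqref{eq:Dmu_symmetry}, I would introduce on $C^\infty(S^{2n-1})$ the symmetric bilinear form
$$Q_M(g,h) := \int_{S^{2n-1}} g\, D_{\mu,M}(h)\, du = 3\mu(g,h,M).$$
The hypothesis $\mu(f,L,M)=0$ then reads $Q_M(f,h_L)=0$, and the desired inequality is $Q_M(f,f)\le 0$. The degree $2$ case is formally identical, with $Q(g,h):=\int g\, D_\mu(h)\, du = 2\mu(g,h)$ in place of $Q_M$, using the operator $D_\mu$ of \eqref{eq:Dmu2}.

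The strict versions of the cone conditions corresponding to \eqref{eq:theta3_open} and \eqref{eq:theta2_open} translate, via the Corollary following Lemma~\ref{lem:quermassintegrals}, into the strict hypotheses of Proposition~\ref{prop:tildeD}. Hence $D_{\mu,B}$ (degree $3$) and $D_\mu$ (degree $2$) each possess a unique positive eigenvalue, realised on the constants, kernel $\calH_1$, and strictly negative eigenvalues otherwise. To extend this spectral picture from $M=B$ to an arbitrary smooth strictly convex $M$ in the degree $3$ case, I would deform along $M_t:=(1-t)B+tM$, $t\in[0,1]$. Since both $B$ and $M$ have smooth, strictly positive second fundamental form, each $M_t$ does too, so Corollary~\ref{cor:elliptic_operator} applies uniformly: every $D_{\mu,M_t}$ is a formally self-adjoint elliptic operator of order two whose kernel equals $\calH_1$. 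Standard perturbation theory for such smooth families on a compact manifold delivers continuous dependence of the eigenvalues on $t$; since the kernel dimension is constant, no non-kernel eigenvalue can cross zero, so the number of positive eigenvalues is invariant along the homotopy. Therefore $D_{\mu,M}$ has the same spectral structure as $D_{\mu,B}$, and $Q_M$ (respectively $Q$) descends to a non-degenerate Lorentzian form of signature $(1,\infty)$ on $C^\infty(S^{2n-1})/\calH_1$.

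Finally, since $\psi_\theta$ is the Grassmannian average over $\Grass_3(\theta)$ of the non-negative $3$-dimensional mixed volume $V(\cdot|E,\cdot|E,\cdot|E)$ and $L,M$ have non-empty interior in $\CC^n$, one has $\psi_\theta(L,L,M)>0$, hence $\mu(L,L,M)>0$ for every nontrivial $\mu$ in the cone under consideration; the analogous positivity $\mu(L,L)>0$ holds in the degree $2$ case. Thus $h_L$ is $Q_M$-timelike. In a Lorentzian form of signature $(1,\infty)$, the orthogonal complement of a timelike vector is negative definite modulo the kernel, so $Q_M(f,h_L)=0$ forces $Q_M(f,f)\le 0$ with equality if and only if $f\in\calH_1$. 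The principal obstacle is the homotopy step, which requires ellipticity, constancy of the kernel, and continuity of the discrete spectrum to hold in unison along the path from $B$ to $M$; all three are consequences of Corollary~\ref{cor:elliptic_operator} together with classical perturbation theory for self-adjoint elliptic operators on compact manifolds.
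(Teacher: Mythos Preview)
Your approach is correct and mirrors the paper's: both reduce the claim to a spectral statement about the elliptic operator $D_{\mu,M}$, verify it at $M=B$ via Proposition~\ref{prop:tildeD}, and propagate to general smooth strictly convex $M$ by a homotopy $M_t=(1-t)B+tM$, using continuity of eigenvalues together with constancy of the kernel dimension to prevent any eigenvalue from crossing zero. The paper frames this through the \emph{weighted} eigenvalue problem $D_{\mu,M}(f)+\lambda\,\frac{D_{\mu,M}(L)}{h_L}\,f=0$, in which $h_L$ is an eigenfunction with eigenvalue $-1$, and then expands $f$ in this weighted eigenbasis (deforming both $L$ and $M$ along the way); your Lorentzian-signature reading of the unweighted bilinear form $Q_M$ is an equivalent and slightly more streamlined packaging, since it keeps $L$ out of the operator and hence requires deforming only $M$. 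One small point to make explicit: Corollary~\ref{cor:elliptic_operator} does not literally assert $\ker D_{\mu,M}=\calH_1$; it says $D_{\mu,M}f=0\Rightarrow D_\mu(f,f)\le 0$ with equality iff $f\in\calH_1$, and to conclude the kernel statement you still need the short step (spelled out in the paper's Proposition~\ref{prop:neg_eigenvalues}) that $0=\int f\,D_{\mu,M}f\,du=\int h_M\,D_\mu(f,f)\,du$ with $h_M>0$ forces $D_\mu(f,f)\equiv 0$.
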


The inequalities  \eqref{eq:ineq3} and \eqref{eq:ineq2} are readily implied by Proposition~\ref{prop:equiv0}. Indeed, if $L,M$ are convex bodies with non-empty interior, $C^\infty$ boundary, and $C^\infty$ support function, 
then, by inequality \eqref{eq:garding_ineq},
$$\mu(L,L,M)>0$$ and hence there exists a real number $\lambda$ such that
$$\mu(f, L,M)-\lambda \mu(L,L,M)=0.$$
Put $f'=f -\lambda h_L$. Then $\mu(f',L,M)=0$ by linearity, and hence
$$0\geq \mu(f',f',M)= \mu(f,f,M)- \frac{\mu(f,L,M)^2}{\mu(L,L,M)}.$$

We follow Hilbert \cite{hilbert12}*{Chapter 19} and Aleksandrov \cite{aleksandrov38} to prove Proposition~\ref{prop:equiv0}. By translation-invariance, we may assume that $L$ and $M$ contain the origin
in their interior and hence $h_L, h_M>0$. Moreover, $D_{\mu,M} (L)>0$ by \eqref{eq:garding_ineq}. Consider the eigenvalue problem
\begin{equation}\label{eq:elliptic_eigenvalue}D_{\mu, M}( f)+\lambda\frac{D_{\mu,M} (L)}{h_{L}}f=0.\end{equation}
Since  $\mu$ belongs to the convex cone generated by $\psi_\theta$ with $\theta$ satisfying \eqref{eq:theta3_open},  $D_{\mu,M}$ is, by Corollary~\ref{cor:elliptic_operator}, a formally self-adjoint, elliptic
linear differential operator. Replacing $D_{\mu,M}$ by the formally self-adjoint, elliptic
linear differential operator 
$$\tilde D_{\mu,M} (f)= \left(\frac{h_{L}}{D_{\mu,M} (L)}\right)^\frac{1}{2} D_{\mu, M}\left( \left(\frac{h_{L}}{D_{\mu,M} (L)}\right)^\frac{1}{2}f\right), $$
 the general theory of such operators implies (see, e.g., \cite{aubin98}*{p. 125}) that
there exists an orthonormal basis $\{f_k\}_{k=1}^\infty$ of $L^2\left(S^{2n-1}, \frac{D_{\mu,M}(L)}{h_{L}}\; du\right)$ such that $f_k$ is $C^\infty$ and a solution of \eqref{eq:elliptic_eigenvalue}. 
The set of eigenvalues of \eqref{eq:elliptic_eigenvalue} is countable and discrete and the corresponding eigenspaces are finite-dimensional. Moreover, there are only finitely many negative eigenvalues.

We investigate the set of eigenvalues of \eqref{eq:elliptic_eigenvalue} more closely. 

\begin{proposition}\label{prop:neg_eigenvalues} 
If $\mu$ belongs to the convex cone generated by $\psi_\theta$ with $\theta$ satisfying \eqref{eq:theta3_open}, then $0$ and $-1$ are eigenvalues of \eqref{eq:elliptic_eigenvalue} 
and the corresponding eigenspaces  are spanned by the restriction of linear functionals to the unit 
sphere and by $h_{L}$, respectively. All other eigenvalues of \eqref{eq:elliptic_eigenvalue} are positive.
\end{proposition}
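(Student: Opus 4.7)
\textbf{The plan} is to follow the Hilbert--Aleksandrov strategy: confirm directly that $\lambda=0$ and $\lambda=-1$ are eigenvalues containing at least the claimed eigenspaces, and then control the remaining spectrum by deforming $(L,M)$ to $(B,B)$ and invoking spectral continuity.

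\textbf{Two eigenvalues by inspection.} For $\lambda=0$, equation \eqref{eq:elliptic_eigenvalue} reduces to $D_{\mu,M}(f)=0$, which by Corollary~\ref{cor:elliptic_operator} is equivalent to $f\in\calH_1$; hence the $0$--eigenspace is exactly the $2n$--dimensional space $\calH_1$. For $\lambda=-1$, substituting $f=h_L$ gives $D_{\mu,M}(h_L)-D_{\mu,M}(L)=0$, so $\RR h_L$ lies in the $-1$--eigenspace.

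\textbf{Deformation and the base case.} I would introduce $L_s=(1-s)L+sB$ and $M_s=(1-s)M+sB$ for $s\in[0,1]$; since $\overline\nabla^2 h_{L_s}+h_{L_s}\overline g$ is a pointwise convex combination of positive definite forms, both $L_s$ and $M_s$ satisfy the smoothness and positivity hypothesis of the proposition for every $s$. The family
$$D_{\mu,M_s}(f)+\lambda\, w_s\,f=0, \qquad w_s:=D_{\mu,M_s}(L_s)/h_{L_s},$$
is a well--posed self--adjoint spectral problem in the weighted space $L^2(S^{2n-1},w_s\,du)$; the weight $w_s$ is everywhere positive by the G\aa rding--type inequality \eqref{eq:garding_ineq} applied pointwise with $A=\overline\nabla^2 h_{M_s}+h_{M_s}\overline g$. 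At $s=1$ one has $L_1=M_1=B$ and $h_{L_1}\equiv 1$, so $w_1$ is a positive constant and the problem reduces to $D_{\mu,B}(f)=-\lambda w_1 f$. Using \eqref{eq:DmuB} together with Proposition~\ref{prop:tildeD} (whose hypotheses are precisely the convex cone condition on $\mu$), I would read off the full spectrum at $s=1$: $\lambda=-1$ is a simple eigenvalue spanned by the constants $=\RR h_B$; $\lambda=0$ has eigenspace $\calH_1$ of multiplicity $2n$; and all other eigenvalues are strictly positive.

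\textbf{Propagation, and the main obstacle.} The $0$--eigenspace of the deformed problem has constant dimension $2n$ for every $s\in[0,1]$. By continuity of the spectrum of an analytic family of self-adjoint elliptic operators (Kato's perturbation theory), no eigenvalue $\lambda_k(s)$ can cross through $0$ during the deformation, as this would momentarily enlarge the $0$--eigenspace. Hence the number of strictly negative eigenvalues (counted with multiplicity) is constant in $s$; at $s=1$ it equals $1$, so it equals $1$ for every $s\in[0,1]$. Since $h_{L_s}$ is a $-1$--eigenfunction for all $s$, this unique negative eigenvalue must be $-1$ itself, with one--dimensional eigenspace $\RR h_{L_s}$. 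Setting $s=0$ yields the Proposition. The step I expect to be the most delicate is the spectral continuity argument: one must check uniform ellipticity of $D_{\mu,M_s}$ and uniform positive lower bounds on $w_s$, both of which should follow from the smoothness of the deformation and the pointwise G\aa rding inequality \eqref{eq:garding_ineq}.
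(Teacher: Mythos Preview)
Your approach is essentially the same as the paper's: identify the eigenspaces for $\lambda=0$ and $\lambda=-1$ directly, verify the base case $L=M=B$ via Proposition~\ref{prop:tildeD}, and then use spectral continuity along the segment $L_s,M_s$ together with the constant multiplicity of the $0$--eigenvalue to prevent any other eigenvalue from becoming negative. Two small points deserve attention. First, Corollary~\ref{cor:elliptic_operator} does not literally assert that $\ker D_{\mu,M}=\calH_1$; it only says $D_{\mu,M}f=0\Rightarrow D_\mu(f,f)\le 0$ with equality iff $f\in\calH_1$. The paper closes this by integrating: $0=\int f\,D_\mu(M,f)=\int h_M\,D_\mu(f,f)$ with $h_M>0$ forces $D_\mu(f,f)\equiv 0$, hence $f\in\calH_1$. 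Second, for the continuity step the paper converts the weighted problem into a standard one via $\tilde D_{\mu,M_s}(f)=(h_{L_s}/D_{\mu,M_s}(L_s))^{1/2}\,D_{\mu,M_s}\big((h_{L_s}/D_{\mu,M_s}(L_s))^{1/2}f\big)$, so that all eigenvalue problems live in the fixed space $L^2(S^{2n-1},du)$ and uniform ellipticity suffices; this is exactly the issue you flagged as delicate, and the conversion is how it is handled.
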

\begin{proof}
Suppose  $\lambda=0$ and that $f$ is a solution of \eqref{eq:elliptic_eigenvalue}. Then, 
$D_{\mu}(M,f)=0$, and Corollary~\ref{cor:elliptic_operator} yields $D_\mu(f,f)\leq 0$. From \eqref{eq:Dmu_symmetry} we have 
$$0=\int  fD_\mu(M,f)=\int  h_{M}D_\mu(f,f)\leq 0,$$
which implies $D_\mu(f,f)=0$.  By Corollary~\ref{cor:elliptic_operator}, this is possible if and only if $f$ is the restriction of a linear functional to the unit sphere.
 
If $\lambda =-1$, then it is clear that $f=h_{L}$ is a solution of \eqref{eq:elliptic_eigenvalue}. We show now that every other solution of \eqref{eq:elliptic_eigenvalue} with $\lambda =-1$ must be a multiple of $h_L$
and that there are no other negative eigenvalues.

We prove this statement first for $L=M=B$, where $B$ denotes the unit ball in $\CC^n$. In this case the eigenvalue problem in \eqref{eq:elliptic_eigenvalue} reduces to 
\begin{equation}\label{eq:D3Ball} D_{\mu,B}(f)+\lambda \frac{n-1}{\omega_{2n-3}}(2(n-2)c_0+3c_1)f=0,\end{equation}
where  $D_{\mu,B}$ is given explicitly by equation \eqref{eq:DmuB} and the constants $c_0,c_1$ arise from $\mu=c_0\mu_{3,0}+ c_1\mu_{3,1}$. 
Since $2(n-2)c_0+3c_1>0$ and $D_{\mu,B}$ and its complexification have the same spectrum, the desired statement follows directly from  Proposition~\ref{prop:tildeD}.

Let $L,M $ now be general convex bodies  with $C^\infty$ boundary and $C^\infty$ support function containing the origin in the interior. 
Since $L, M$ have all principal curvatures strictly positive, see, e.g., \cite{schneider14}*{p. 115},  
$$L_t = (1-t) B + t L \qquad \text{and }\qquad M_t = (1-t) B + t M, \qquad t\in [0,1],$$
are convex bodies  with $C^\infty$ boundary and $C^\infty$ support function containing the origin in the interior.  
Hence $\{\tilde D_{\mu, M_t}\colon t\in[0,1]\}$ is a family of uniformly elliptic, self-adjoint, linear differential operators, i.e.\
$$\sigma(\tilde D_{\mu, M_t})^{ij}\xi_i \xi_j \geq  c \,\overline{g}^{ij}\xi_i \xi_j   \qquad \text{for } \xi\in \RR^{2n-1}$$
with some constant $c>0$ independent of $t$.
We denote by $$ \lambda_1(t)\leq \lambda_2(t)\leq \lambda_3(t)\leq  \cdots$$
the eigenvalues of 
\begin{equation}\label{eq:D_t}\tilde D_{\mu,M_t}(f)+\lambda f=0,\end{equation}
 ordered and repeated according to their multiplicity. Since the family $\{\tilde D_{\mu, M_t}\colon t\in[0,1]\}$ is uniformly elliptic, Theorem~2.3.3 of \cite{henrot06} (which is stated only for bounded domains of $\RR^n$, but the 
proof works also for compact manifolds) guarantees the continuous dependence of $\lambda_k(t)$ on $t$.

Suppose there exists some $t\in[0,1]$ such that $\lambda_2(t)<0$. Put
$$t_0= \inf\{ t\in [0,1]\colon  \lambda_2(t)<0 \}.$$
By continuity, $\lambda_2(t_0)=0$. Moreover, since for every $t\in [0,1]$ the eigenvalue $0$ has multiplicity $2n$, $\lambda_{2n+2}(t_0)>0$. If $t_0<1$, then for $t>t_0$ sufficiently close to $t_0$
we have $\lambda_{2n+2}(t)>0$   and hence  $\lambda_2(t)=0$. This contradicts the definition of $t_0$. We conclude that $\lambda_2(t)=0$ for $t\in [0,1]$. 
\end{proof}

To conclude the proof of Proposition \ref{prop:equiv0} suppose that 
$$\mu(f,L,M)=0.$$ 
Let $f= \sum_{k=1}^\infty f_k$ be the expansion of $f$ into eigenfunctions of \eqref{eq:elliptic_eigenvalue}. 
Here we stipulate that every $f_k$ corresponds to a different eigenvalue $\lambda_k$, ordered by their size. In particular, we have
$\lambda_{1}=-1$ and $\lambda_{2}=0$.  Since  $f_k$ and $f_l$ for $k\neq l$ are orthogonal with respect to the $L^2$ inner product with weight $D_{\mu,M}(L)/h_L \; du$ and $h_L$ spans the eigenspace corresponding
to $\lambda_1=-1$, we conclude that 
\begin{align*}0&=3 \mu(f,L,M)=\int h_{L}D_{\mu,M}(f) \; du \\
 &= -\sum_{k=1}^\infty \lambda_k \int h_{L}f_k \frac{D_{\mu,M}(L)}{h_L}\; du=  \int  f_{1}  D_{\mu,M}(L) \;du.
\end{align*}
Since $f_{1}$ is a multiple of $h_{L}$, this implies $f_{1}=0$.
Hence
\begin{align*}3 \mu(f,f,M)&=\int f D_{\mu,M}(f) \;du= -\sum_{k=3}^\infty \lambda_k\int f f_k \frac{D_{\mu,M}(L)}{h_L}\;du \\
 & =  -\sum_{k=3}^\infty \lambda_k\int f_k^2  \frac{D_{\mu,M}(L)}{h_L}  \;du \leq  0.
\end{align*}
Equality holds if and only if 
$f_k=0$ for $k\geq 3$. Hence $\mu(f,f,M)=0$ if and only if $f$ is the restriction of a linear functional to the unit sphere. 

The case that $\mu$ belongs to the convex cone generated by $\varphi_\theta$ with $\theta$ satisfying \eqref{eq:theta2_open} is proved along the same lines, the only
change is that instead of the eigenvalue problem \eqref{eq:elliptic_eigenvalue}, one has to consider now the eigenvalue problem
$$D_{\mu}( f)+\lambda\frac{D_{\mu} (L)}{h_{L}}f=0.$$

\section{Equality cases}

We say that the unitary valuation $\mu\in \Val_k^{U(n)}$ satisfies the Aleksandrov-Fenchel inequality if
$$\mu(f, L, M_1,\ldots, M_{k-2})^2 \geq \mu(f, f, M_1,\ldots, M_{k-2}) \mu(L, L, M_1,\ldots, M_{k-2})$$
for all convex bodies $L$, $M_1,\ldots, M_{k-2}$, and all differences of support functions $f$. In the following we will use the abbreviations $\calM=(M_1,\ldots, M_{k-2})$ and
$\mu(f, L, \calM)=\mu(f, L, M_1,\ldots, M_{k-2})$.

\begin{lemma} Suppose $\mu$ satisfies the Aleksandrov-Fenchel inequality. Let $L$ and $M_1,\ldots, M_{k-2}$ be convex bodies, $f$ the difference of two support functions and assume  
$$\mu(L,L, \calM)>0.$$
Then equality holds in the inequality
$$\mu(f,L,\calM)^2 \geq \mu(f,f,\calM)\mu(L,L ,\calM)$$
if and only if 
$$ S_\mu(f,\calM)= \alpha S_\mu(L,\calM)$$
for  some constant $\alpha$.
\end{lemma}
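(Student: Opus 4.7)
The plan is to reduce equality in the Aleksandrov--Fenchel inequality to the vanishing of a single mixed area measure by the standard linearization trick going back to Aleksandrov. Since $\mu(L,L,\calM)>0$, I would set
$$\alpha=\frac{\mu(f,L,\calM)}{\mu(L,L,\calM)},\qquad g=f-\alpha h_L.$$
Then $g$ is again a difference of support functions; by multilinearity $\mu(g,L,\calM)=0$ and
$$\mu(g,g,\calM)=\mu(f,f,\calM)-\frac{\mu(f,L,\calM)^{2}}{\mu(L,L,\calM)}.$$
Hence equality in the Aleksandrov--Fenchel inequality is equivalent to $\mu(g,g,\calM)=0$, while the proposed conclusion $S_\mu(f,\calM)=\alpha S_\mu(L,\calM)$ is equivalent, by the linearity \eqref{eq:linearityS} of mixed area measures extended to differences, to $S_\mu(g,\calM)=0$. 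The task therefore becomes to show $\mu(g,g,\calM)=0\iff S_\mu(g,\calM)=0$.

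One direction will be immediate from the integral representation of $\mu$ in item (iv) of Section~2: if $S_\mu(g,\calM)=0$, then $\mu(g,g,\calM)=\tfrac{1}{k}\int g\,dS_\mu(g,\calM)=0$, which also gives the ``if'' direction of the lemma.

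For the converse I would use a variational argument. Given any difference of support functions $\phi$ and any $t\in\RR$, the function $g+t\phi$ is again a difference of support functions, so the Aleksandrov--Fenchel hypothesis applies to it in place of $f$. Using $\mu(g,L,\calM)=\mu(g,g,\calM)=0$, the resulting inequality expands to
$$t^{2}\bigl[\mu(\phi,L,\calM)^{2}-\mu(\phi,\phi,\calM)\mu(L,L,\calM)\bigr]-2t\,\mu(g,\phi,\calM)\,\mu(L,L,\calM)\geq 0$$
for every $t\in\RR$. The coefficient of $t$ must therefore vanish, and since $\mu(L,L,\calM)>0$ this forces $\mu(g,\phi,\calM)=0$ for every difference of support functions $\phi$. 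Because every $C^{2}$ function on $S^{2n-1}$ is such a difference and $\mu(g,\phi,\calM)=\tfrac{1}{k}\int\phi\,dS_{\mu}(g,\calM)$, the density of $C^{2}(S^{2n-1})$ in $C(S^{2n-1})$ then yields $S_{\mu}(g,\calM)=0$, as required.

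The main obstacle is precisely this converse step: one has to extract a linear relation from a quadratic inequality, which is exactly what the perturbation $g+t\phi$ accomplishes; the strict positivity $\mu(L,L,\calM)>0$ enters only at the very end to cancel the common factor in the linear coefficient. Everything else is a bookkeeping consequence of the multilinearity of $\mu$ and $S_{\mu}$ and of the integral representation recorded in Section~2.
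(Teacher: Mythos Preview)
Your proof is correct and follows essentially the same route as the paper's: reduce to $g=f-\alpha h_L$ with $\mu(g,L,\calM)=0$ and $\mu(g,g,\calM)=0$, then extract the linear relation $\mu(g,\phi,\calM)=0$ for all $\phi$ by a first-variation argument. The only cosmetic difference is that the paper phrases this last step as a Lagrange-multiplier condition (``$f'$ maximizes $\mu(\cdot,\cdot,\calM)$ subject to $\mu(\cdot,L,\calM)=0$''), obtaining $\mu(Z,f',\calM)=\alpha'\mu(Z,L,\calM)$ for some $\alpha'$, whereas you carry out the perturbation $g+t\phi$ explicitly and read off the vanishing of the linear coefficient, which directly yields $S_\mu(g,\calM)=0$ (so in fact the Lagrange multiplier is zero). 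Your version is slightly more self-contained and pins down the constant as $\alpha=\mu(f,L,\calM)/\mu(L,L,\calM)$, but the underlying idea is identical.
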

\begin{proof} Since $\mu(L,L, \calM)>0$ and $\mu$ satisfies the Aleksandrov-Fenchel inequality, we immediately obtain that for every $f$ the condition
$$\mu(f,L,\calM)=0$$
implies 
$$\mu(f,f,\calM)\leq 0.$$
Assume $\mu(f,L,\calM)^2 = \mu(f,f,\calM)\mu(L,L ,\calM)$ for some $f$. Then $f'= f- \lambda h_L$ with $\lambda= \mu(f,L, \calM) /\mu(L,L, \calM)$ satisfies 
$$\mu(f',L, \calM)=0 \qquad\text{and}\qquad \mu(f',f', \calM) =0.$$
Consequently, $f'$ maximizes  $\mu(f,f,\calM)$ under the constraint $\mu(f,L,\calM)=0$ and therefore there exists a constant $\alpha$ such that
$$ \alpha \mu(Z, L,\calM) = \mu(Z, f',\calM)$$
for every difference of support functions $Z$. Hence, by the definition of the mixed area measure $S_\mu$, 
$$\alpha  \int Z \; d S_\mu(L,\calM) =  \int Z \; d S_\mu(f',\calM) $$
for every $Z$ and as such $ \alpha  S_\mu(L,\calM) =  S_\mu(f',\calM)$.

If $ S_\mu(f,\calM)= \alpha S_\mu(L,\calM)$ for  some constant $\alpha$, then multiplying this identity by $f$ and $h_L$ and integrating, yields $\alpha \mu(f,L,\calM) =\mu(f,f,\calM)$ and 
$\alpha \mu(L,L,\calM) =\mu(L,f,\calM)$. Thus equality holds in the Aleksandrov-Fenchel inequality. 
\end{proof}

On a smooth manifold $M$, the H\"older space $C^{k,\alpha}$, $0<\alpha< 1$, is defined as the subspace of $C^k(M)$ such that for every coordinate neighborhood $U$ 
of $M$ the $k$-th order derivatives of the restriction $f|_U$ are locally 
H\"older continuous with exponent $0< \alpha< 1$. We say that a convex body $K$ in $\RR^n$ is  $C^{2,\alpha}_+$ if the support function of $K$ is $C^{2,\alpha}(S^{n-1})$ and 
$$\det( \overline \nabla^2h_K + h_K \overline g)>0.$$
In particular, $K$ has a $C^2$ boundary and all its principal curvatures strictly positive. 

\begin{lemma} \label{lem:decomp} Suppose  $\mu$ belongs to the convex cone generated by $\psi_\theta$ with $\theta$ satisfying \eqref{eq:theta3_open} and $M\in C^{2,\alpha}_+$.  Then 
$D_{\mu,M}\colon C^{2,\alpha}(S^{2n-1}) \to C^{0,\alpha}(S^{2n-1})$ satisfies
$$C^{0,\alpha}(S^{2n-1}) = \ker D_{\mu,M}  \oplus \operatorname{im}  D_{\mu,M},$$
where the summands are orthogonal with respect to the standard $L^2$ inner product and $\ker D_{\mu,M}$ consists precisely of the restriction of linear functionals to the unit sphere.
\end{lemma}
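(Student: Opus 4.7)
The plan is to combine Corollary~\ref{cor:elliptic_operator} with the standard Schauder--Fredholm theory for second-order formally self-adjoint elliptic operators in Hölder spaces. Since $M\in C^{2,\alpha}_+\subset C^2_+$, Corollary~\ref{cor:elliptic_operator} applies and shows that $D_{\mu,M}$ is a formally self-adjoint, elliptic linear differential operator of order at most two. From the defining formula for $p_\mu$ and $D_\mu(M,\cdot)$, its coefficients are linear expressions in the entries of $\overline\nabla^2 h_M+h_M\overline g$ and therefore lie in $C^{0,\alpha}(S^{2n-1})$. The standard Schauder theory on the compact manifold $S^{2n-1}$ then guarantees that $D_{\mu,M}\colon C^{2,\alpha}(S^{2n-1})\to C^{0,\alpha}(S^{2n-1})$ is Fredholm and, by formal self-adjointness, has index zero. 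Consequently, $\operatorname{im}D_{\mu,M}$ is closed in $C^{0,\alpha}$ and coincides with the $L^2$-orthogonal complement of $\ker D_{\mu,M}$ inside $C^{0,\alpha}$, giving the desired topological direct sum.

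The orthogonality is transparent from the self-adjointness relation \eqref{eq:Dmu_symmetry}: if $f\in\ker D_{\mu,M}$ and $g=D_{\mu,M}(u)$ with $u\in C^{2,\alpha}$, then by \eqref{eq:Dmu_symmetry}
$$\int_{S^{2n-1}} f g \, du=\int_{S^{2n-1}} f\, D_\mu(h_M,u) \, du=\int_{S^{2n-1}} u\, D_\mu(h_M,f) \, du=0.$$

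It remains to identify the kernel. After translating $M$ we may assume $h_M>0$. Suppose $f\in C^{2,\alpha}(S^{2n-1})$ satisfies $D_{\mu,M}(f)=0$; this is already enough regularity to apply the pointwise conclusions of Corollary~\ref{cor:elliptic_operator}, which yields $D_\mu(f,f)\leq 0$ on $S^{2n-1}$. Applying \eqref{eq:Dmu_symmetry} with $f_1=h_M$ and $f_2=f_3=f$ gives
$$\int_{S^{2n-1}} h_M\, D_\mu(f,f) \, du=\int_{S^{2n-1}} f\, D_{\mu,M}(f) \, du=0,$$
and since $h_M>0$ and $D_\mu(f,f)\leq 0$, we conclude $D_\mu(f,f)\equiv 0$. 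The equality statement of Corollary~\ref{cor:elliptic_operator} then forces $f$ to be the restriction of a linear functional to $S^{2n-1}$. The converse is immediate: every such $f$ is a difference of support functions of points, so \eqref{eq:val_kernel} implies $\mu(M,f,g)=0$ for every difference of support functions $g$, which, via the defining formula for $D_{\mu,M}$, yields $D_{\mu,M}(f)\equiv 0$.

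The main technical input is the Schauder--Fredholm alternative for elliptic operators with $C^{0,\alpha}$ coefficients on the closed manifold $S^{2n-1}$, which is classical; the rest is a straightforward repetition of the Hilbert--Aleksandrov integration trick already exploited in the proof of Proposition~\ref{prop:neg_eigenvalues}.
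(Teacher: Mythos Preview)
Your argument is correct. The kernel identification is exactly what the paper does (it simply says ``can be proved as in Proposition~\ref{prop:neg_eigenvalues}''), and your orthogonality computation via \eqref{eq:Dmu_symmetry} is fine.

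The difference lies in how the decomposition $C^{0,\alpha}=\ker D_{\mu,M}\oplus\operatorname{im}D_{\mu,M}$ is obtained. You go directly: since $h_M\in C^{2,\alpha}$, the coefficients of $D_{\mu,M}$ lie in $C^{0,\alpha}$, and the Schauder--Fredholm alternative for second-order elliptic operators with H\"older coefficients on a closed manifold gives Fredholmness of $D_{\mu,M}\colon C^{2,\alpha}\to C^{0,\alpha}$; formal self-adjointness then forces index zero and the stated $L^2$-orthogonal splitting. The paper instead first treats the case of $C^\infty$ bodies $M$, where the decomposition is the standard Hodge-type splitting for smooth self-adjoint elliptic operators, and then passes to $M\in C^{2,\alpha}_+$ by approximating with smooth bodies and invoking Schauder interior estimates, following \cite{zhang94}*{Lemma~6.1}. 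Your route is shorter and avoids the approximation step entirely, at the cost of appealing to the (equally classical but slightly less frequently stated) Fredholm theory in H\"older spaces with non-smooth coefficients; the paper's route stays within the $C^\infty$ elliptic package and pays for it with the extra approximation argument.
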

\begin{proof}
The assertion that $\ker D_{\mu,M}$ consists precisely of the restriction of linear functionals to the unit sphere can be proved as in Proposition~\ref{prop:neg_eigenvalues}. 
If the support function of $M$ is 
$C^\infty$ and $D_{\mu,M}\colon C^{\infty} \to C^{\infty}$, then the decomposition 
$$C^\infty(S^{2n-1}) = \ker D_{\mu,M} \oplus \operatorname{im} D_{\mu,M}$$
follows from the general theory of self-adjoint, elliptic linear differential operators, see, e.g., \cite{wells08}*{Theorem 4.12}. 
Now we may proceed exactly as in \cite{zhang94}*{Lemma~6.1}, approximating  $M$ by smooth convex bodies and using the Schauder interior estimates,
to obtain the corresponding decomposition if $M$ is only $C^{2,\alpha}_+$.

\end{proof}

\begin{theorem}
Suppose  $\mu$ belongs to the convex cone generated by $\psi_\theta$ with $\theta$ satisfying \eqref{eq:theta3_open} and $M\in C^{2,\alpha}_+$. If 
$$\mu(L,L,M)>0$$
then equality
holds in the inequality
$$\mu(f,L,M)^2 \geq \mu(f,f,M)\mu(L,L ,M)$$
if and only if there exists a constant $\alpha$ such that $\alpha h_L$ and $f$ differ by the restriction of a linear functional to the unit sphere.
If $M$ is a ball, then the above characterization extends to $\cos^2 \theta=0$ and $\frac{3(n+1)}{5n-1}$.

If $\mu$ belongs to the convex cone generated by $\varphi_\theta$ with $\theta$ satisfying \eqref{eq:theta2_halfopen} and 
$$\mu(L,L)>0$$
then equality
holds in the inequality
$$\mu(f,L)^2 \geq \mu(f,f)\mu(L,L)$$
if and only if there exists a constant $\alpha$ such that $\alpha h_L$ and $f$ differ by the restriction of a linear functional to the unit sphere.
If $\mu= \varphi_{\theta}$ with $\cos^2\theta= \frac{n+1}{2n}$, then  equality holds if and only if
there exists a constant $\alpha$ such that $\alpha h_L$ and $f$ differ by an element of $\calH_{1,0}\oplus \calH_{0,1} \oplus \calH_{1,1}$. 

\end{theorem}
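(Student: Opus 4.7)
The plan is to reduce the equality statement to a kernel computation for the differential operator $D_{\mu,M}$ and then read off the equality cases. The starting point is the lemma proved at the beginning of this section, which characterises equality as the existence of a constant $\alpha$ with $S_\mu(f,M)=\alpha\,S_\mu(L,M)$. By linearity, the symmetry relation \eqref{eq:symmetryS} extends to differences of support functions in either slot, so this is the same as
\[
\int (f-\alpha h_L)\,dS_\mu(g,M)=0 \qquad\text{for every } g\in C^\infty(S^{2n-1}).
\]
Since $M\in C^{2,\alpha}_+$ and $g$ is smooth, the polarized version of Proposition~\ref{prop:Smu_eq_Dmu} yields $dS_\mu(g,M)=D_{\mu,M}(g)\,du$, so the condition becomes the $L^2$-orthogonality of $f-\alpha h_L$ to $\operatorname{im} D_{\mu,M}$.

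For $\mu$ in the convex cone generated by $\psi_\theta$ with $\theta$ satisfying \eqref{eq:theta3_open}, Lemma~\ref{lem:decomp} provides the orthogonal decomposition $C^{0,\alpha}=\ker D_{\mu,M}\oplus \operatorname{im} D_{\mu,M}$ with $\ker D_{\mu,M}$ consisting of the restrictions of linear functionals. Hence $f-\alpha h_L$ is such a restriction. The converse is immediate: if $f=\alpha h_L+\ell$ with $\ell$ the restriction of a linear functional, then $\ell=h_{\{x\}}$ for some $x\in \CC^n$ and \eqref{eq:val_kernel} together with multilinearity of the polarization gives $\mu(\ell,\,\cdot\,,M)=0$, so that $\mu(f,L,M)=\alpha\mu(L,L,M)$ and $\mu(f,f,M)=\alpha^2\mu(L,L,M)$, producing equality. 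The degree $2$ case in the half-open range \eqref{eq:theta2_halfopen} follows the same script with the $M$-independent operator $D_\mu$ of \eqref{eq:Dmu2} in place of $D_{\mu,M}$.

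The endpoints require separate care, since there $p_\mu$ loses the strict hyperbolicity of Proposition~\ref{prop:hyperbolicity} and $D_{\mu,M}$ may degenerate (for instance along the direction of $JN$); this is the main obstacle. We bypass it by restricting $M$ to the unit ball, where the explicit form \eqref{eq:DmuB} exhibits $D_{\mu,B}$ as a constant-coefficient combination of $\Delta$, $JN(JN\,\cdot\,)$, and the identity. Such an operator is $U(n)$-equivariant and therefore acts as a scalar on each bi-degree component $\calH_{k,l}$; the scalars are computed from \eqref{eq:specLaplacian} and Lemma~\ref{eigenv:JN}. Even when ellipticity fails, the spectrum remains discrete and the zero eigenspace is finite-dimensional, so a Hodge-type splitting into kernel and $L^2$-closed image survives and the argument of the previous paragraph applies verbatim. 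Proposition~\ref{prop:tildeD} identifies the kernels: at the degree $3$ ball endpoints it is still the span of linear functionals, giving the claimed characterization; at the degree $2$ endpoint $\cos^2\theta=(n+1)/(2n)$ the final statement of Proposition~\ref{prop:tildeD} gives $\ker D_\mu^{\CC}=\calH_{1,0}\oplus\calH_{0,1}\oplus\calH_{1,1}$, producing precisely the enlarged equality class stated in the theorem.
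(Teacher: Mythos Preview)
Your argument is correct and follows essentially the same route as the paper: invoke the lemma to obtain $S_\mu(f,M)=\alpha S_\mu(L,M)$, use the symmetry relation \eqref{eq:symmetryS} together with Proposition~\ref{prop:Smu_eq_Dmu} to conclude that $f-\alpha h_L$ is $L^2$-orthogonal to $\operatorname{im} D_{\mu,M}$, and then apply Lemma~\ref{lem:decomp} (respectively Proposition~\ref{prop:tildeD} for the degree~$2$ and ball-endpoint cases) to identify $f-\alpha h_L$ as an element of the kernel. Your write-up is somewhat more explicit than the paper's---you spell out the converse direction and justify why the kernel/image splitting survives at the endpoints via the $U(n)$-equivariance of the constant-coefficient operator---but the underlying strategy is identical.
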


\begin{proof} Let $Z$ be a convex body. Multiplying the equality $S_\mu(f,M)=\alpha S_\mu(L,M)$
by the support function of $Z$, integrating and using \eqref{eq:linearityS} and \eqref{eq:symmetryS}, we obtain
$$\int (f-\alpha h_L) \; dS_\mu(M,Z) =0.$$
Consequently, 
 $$\int (f-\alpha h_L) D_\mu(M,g) =0$$
for every $g\in C^{2,\alpha}$ and hence, by Lemma~\ref{lem:decomp},  $f$ and $\alpha h_L$ differ only by the restriction of a linear functional to the unit sphere.

Using Proposition~\ref{prop:tildeD} instead of Lemma~\ref{lem:decomp}, the remaining cases can be proved. 
\end{proof}

Now we show that the bound \eqref{eq:theta2} is optimal.

\begin{proposition}\label{prop:counterexample} If $\mu=\varphi_\theta$ with
$$\frac{n+1}{2n}< \cos^2\theta,$$
then there exist convex bodies $K,L$ such that 
$$\mu(K,L)^2 < \mu(K,K)\mu(L,L).$$
\end{proposition}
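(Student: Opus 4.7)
The plan is to perturb the unit ball $B$ in the direction of a spherical harmonic of bi-degree $(1,1)$. The guiding observation is that among the $U(n)$-invariant subspaces $\calH_{k,l}$ of $L^2(S^{2n-1})$, the operator $D_\mu$ in \eqref{eq:Dmu2} has exactly one positive eigenvalue (on the constants) precisely as long as the inequality $(4n+1)c_1 < 2(3n+1)c_0$ of Proposition~\ref{prop:tildeD} holds; beyond that threshold the eigenvalue on $\calH_{1,1}$ turns positive, and this is the direction in which \eqref{eq:AF2} fails.

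First I would compute the eigenvalue $\lambda_{1,1}$ of $D_\mu$ on $\calH_{1,1}$. For $\psi\in\calH_{1,1}$, Lemma~\ref{eigenv:JN} gives $JN(JN\psi)=0$ and \eqref{eq:specLaplacian} gives $\Delta\psi=-4n\psi$. Substituting into \eqref{eq:Dmu2} yields
$$\lambda_{1,1}=\frac{(4n+1)c_1-2(3n+1)c_0}{\omega_{2n-2}}.$$
Plugging in the explicit coefficients of $\varphi_\theta$ from Lemma~\ref{lem:quermassintegrals}, a short simplification produces
$$\lambda_{1,1}=\frac{2n\cos^2\theta-(n+1)}{(n-1)\,\omega_{2n-2}},$$
which is strictly positive exactly when $\cos^2\theta>\frac{n+1}{2n}$.

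Next, I would fix any non-zero real $\psi\in\calH_{1,1}$ and, for small $\epsilon>0$, set $K=B$ and let $L_\epsilon$ be the convex body with support function $h_{L_\epsilon}=1+\epsilon\psi$. Convexity for small $\epsilon$ is immediate from the Hessian criterion $\overline\nabla^2 h_{L_\epsilon}+h_{L_\epsilon}\overline g=\overline g+\epsilon(\overline\nabla^2\psi+\psi\overline g)>0$. Using bilinearity of the polarization, the identity $\mu(f,g)=\tfrac12\int g\,D_\mu(f)\,du$ from Proposition~\ref{prop:Smu_eq_Dmu}, the fact that $D_\mu(1)$ is constant, and $\int_{S^{2n-1}}\psi\,du=0$, a direct expansion gives
$$\mu(K,K)=\mu(B),\qquad \mu(K,L_\epsilon)=\mu(B),\qquad \mu(L_\epsilon,L_\epsilon)=\mu(B)+\tfrac{\epsilon^2}{2}\lambda_{1,1}\|\psi\|_{L^2}^2.$$
Since $\mu(B)>0$ and $\lambda_{1,1}>0$, we obtain $\mu(K,L_\epsilon)^2<\mu(K,K)\mu(L_\epsilon,L_\epsilon)$, contradicting \eqref{eq:AF2}.

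The one step requiring care is the eigenvalue simplification, since the threshold $\cos^2\theta=\frac{n+1}{2n}$ must coincide exactly with the boundary of the regime in Proposition~\ref{prop:tildeD}; everything else is mechanical.
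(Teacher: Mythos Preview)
Your proof is correct and follows essentially the same approach as the paper: perturb the unit ball by a real spherical harmonic in $\calH_{1,1}$, use the explicit eigenvalue of $D_\mu$ from \eqref{eq:Dmu2} on $\calH_{1,1}$ together with orthogonality to constants, and read off the strict inequality. The paper perturbs $K$ and keeps $L=B$ while you swap the roles, and it leaves the eigenvalue in terms of $c_0,c_1$ rather than rewriting it via Lemma~\ref{lem:quermassintegrals}, but the argument is identical.
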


\begin{proof}
Let $L=B$ be the unit ball in $\CC^n$ and $f\in \calH_{1,1}$ be real-valued and non-zero (e.g., $f(z)=\operatorname{Re}(z_1\overline{z_2})$). For $\varepsilon$ sufficiently small $1+ \varepsilon f$ is the support function of a convex body $K$.  
Since $1$ and $f$ are eigenfunctions of \eqref{eq:Dmu2}, we obtain
$$D_\mu(K)=\frac{1}{\omega_{2n-2}}\left((2(n-1)c_{0}+c_{1})+((4n+1)c_{1}-2(3n+1)c_{0}) \varepsilon f\right).$$
Since $1$ and $f$ are orthogonal with respect to the standard $L^2$ inner product, we have
\begin{align*}
\mu(K,L)& =\mu(L,L)=  2\pi (2(n-1)c_{0}+c_{1}), 
\\\mu(K,K)&=  2\pi    \left( 2(n-1)c_{0}+c_{1}+\frac{((4n+1)c_{1}-2(3n+1)c_{0})\varepsilon^2}{2n\omega_{2n}}\int f^2\; du\right).
\end{align*}
Since $(4n+1)c_{1}-2(3n+1)c_{0}>0$, we obtain
$$\mu(K,L)^2 < \mu(K,K)\mu(L,L).$$

\end{proof}

\section{Brunn-Minkowski and isoperimetric inequalities} 

A straightforward consequence of the Aleksandrov-Fechel inequality \eqref{eq:aleksandrov_fenchel} is the following generalization of the Brunn-Minkowski inequality:
For $m\in \{2,\ldots, n\}$ and all convex bodies $K_0,K_1,K_{m+1},\ldots, K_n$ in $\RR^n$,
\begin{align}&V(K_0+ K_1 [m], K_{m+1},\ldots, K_n)^\frac{1}{m} \label{eq:general_brunn-minkowski}\\
 & \qquad \qquad  \geq V(K_0 [m], K_{m+1},\ldots, K_n)^\frac{1}{m} + V(K_1 [m], K_{m+1},\ldots, K_n)^\frac{1}{m}, \notag
\end{align}
where here and in the following we use the shorthand 
$$(K [m], K_{m+1},\ldots, K_n)= (\underbrace{K,\ldots, K}_{m \text{ times}}, K_{m+1}, \ldots, K_n).$$
Proofs of \eqref{eq:general_brunn-minkowski} were first published by Fenchel \cite{fenchel36b} and Aleksandrov \cite{aleksandrov37}. In the case $m=n$ the inequality \eqref{eq:general_brunn-minkowski} reduces to 
the classical Brunn-Minkowski inequality. 

\begin{theorem}\label{thm:BM}
Suppose $\mu$ belongs to the convex cone generated by the valuations $\psi_\theta$ 
with $\theta$ satisfying \eqref{eq:theta3} and  $m\in \{2,3\}$. Then 
\begin{align}&\mu(K_0+ K_1 [m], K_{m+1},\ldots, K_3)^\frac{1}{m} \label{eq:hermitianBM}\\
 & \qquad \qquad  \geq \mu(K_0 [m], K_{m+1},\ldots, K_3)^\frac{1}{m} + \mu(K_1 [m], K_{m+1},\ldots, K_3)^\frac{1}{m}, \notag
\end{align}
for all convex bodies $K_0,K_1,K_{m+1},\ldots, K_3$  in $\CC^n$. If $\theta$ satisfies \eqref{eq:theta3_open}, $K_3$ (or $K_1$ if $m=3$) is of class $C^{2,\alpha}_+$, and
\begin{equation}\label{eq:hermitianBM_pos}  \mu(K_1 [m], K_{m+1},\ldots, K_3)>0,\end{equation}
then equality holds in the inequality if and only if $K_0$ and $K_1$ are homothetic.

 A corresponding inequality with $m=2$ holds if $\mu$ belongs to the convex cone generated by the valuations $\varphi_\theta$ with $\theta$ satisfying \eqref{eq:theta2}.
If $\theta$ satisfies \eqref{eq:theta2_halfopen} and  $\mu(K_1,K_1)>0$,
then equality holds if and only if $K_0$ and $K_1$ are homothetic.
\end{theorem}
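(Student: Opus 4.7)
The plan is to derive \eqref{eq:hermitianBM} from the Aleksandrov-Fenchel inequalities of Theorem~\ref{th1} via the classical argument from mixed-volume theory, and then to read off the equality cases from Theorem~\ref{th2}.

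For $m=2$ (whether in degree $3$ with $M := K_3$ or in degree $2$ with $M$ absent), multilinearity and symmetry of the polarization give
$$\mu(K_0+K_1,\, K_0+K_1,\, M) = \mu(K_0,K_0,M) + 2\mu(K_0,K_1,M) + \mu(K_1,K_1,M).$$
Setting $a = \mu(K_0,K_0,M)^{1/2}$ and $b = \mu(K_1,K_1,M)^{1/2}$ (non-negativity being inherited from the $\psi_\theta$ and $\varphi_\theta$), Theorem~\ref{th1} gives $\mu(K_0,K_1,M) \geq ab$, so the right-hand side is at least $(a+b)^2$ and taking square roots yields \eqref{eq:hermitianBM}.

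For $m=3$, set $V_i = \mu(K_0[3-i], K_1[i])$, $0 \leq i \leq 3$. Applying Theorem~\ref{th1} with the last slot equal to $K_1$ and then to $K_0$ gives
$$V_1^2 \geq V_0 V_2 \qquad \text{and} \qquad V_2^2 \geq V_1 V_3,$$
so $\log V_i$ is concave, whence $V_1 \geq V_0^{2/3} V_3^{1/3}$ and $V_2 \geq V_0^{1/3} V_3^{2/3}$ (the degenerate case $V_3 = 0$ reduces to monotonicity of $\mu$, or is handled by approximating $K_1$ by $K_1+\varepsilon B$ and using continuity). The trinomial expansion
$$\mu(K_0+K_1) = V_0 + 3V_1 + 3V_2 + V_3 \geq \bigl(V_0^{1/3} + V_3^{1/3}\bigr)^3$$
then yields \eqref{eq:hermitianBM} after taking cube roots.

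For the equality cases under \eqref{eq:theta3_open} (resp.\ \eqref{eq:theta2_halfopen}) together with the stated $C^{2,\alpha}_+$ and positivity hypotheses: in the $m=2$ case, equality forces equality in the single Aleksandrov-Fenchel inequality $\mu(K_0,K_1,M) \geq ab$, and Theorem~\ref{th2} then gives that $K_0$ and $K_1$ are homothetic. In the $m=3$ case, equality forces equality in both $V_1^2 \geq V_0V_2$ and $V_2^2 \geq V_1V_3$; the first of these is an Aleksandrov-Fenchel equality whose distinguished ``$M$'' slot is occupied by $K_1$, and since $K_1 \in C^{2,\alpha}_+$ and $V_3 = \mu(K_1) > 0$ by \eqref{eq:hermitianBM_pos}, Theorem~\ref{th2} already forces $K_0$ and $K_1$ to be homothetic. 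Conversely, homothety makes the $V_i$ a geometric sequence and equality propagates trivially. The only mildly delicate point is the asymmetric regularity hypothesis in the $m=3$ case (only $K_1$, not $K_0$, is assumed $C^{2,\alpha}_+$), but this dissolves once one notices that only the AF equality in which the privileged slot contains $K_1$ is actually needed to conclude homothety; the second equality $V_2^2 = V_1V_3$ is then automatic.
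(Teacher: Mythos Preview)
Your argument is correct and follows essentially the same route as the paper: the inequality is derived from Theorem~\ref{th1} via the classical mixed-volume argument (the paper simply cites \cite{schneider14}*{Theorem~7.4.5}), and the equality cases are read off from Theorem~\ref{th2}. For the equality analysis the paper proceeds slightly differently---it shows that the concave function $f(\lambda)=\mu((1-\lambda)K_0+\lambda K_1[m],\dots)^{1/m}$ satisfies $f''(1/2)=0$ and interprets this as an Aleksandrov--Fenchel equality between $\tfrac12(K_0+K_1)$ and $K_1$---but your more direct route (forcing equality in one of the inequalities $V_i^2\ge V_{i-1}V_{i+1}$) works just as well.

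There is one labeling slip in the $m=3$ case. The inequality $V_1^2\ge V_0V_2$ reads $\mu(K_1,K_0,K_0)^2\ge \mu(K_1,K_1,K_0)\,\mu(K_0,K_0,K_0)$, so its distinguished $M$-slot is $K_0$, not $K_1$. It is the \emph{second} inequality $V_2^2\ge V_1V_3$, i.e.\ $\mu(K_0,K_1,K_1)^2\ge \mu(K_0,K_0,K_1)\,\mu(K_1,K_1,K_1)$, that has $M=K_1\in C^{2,\alpha}_+$ and $\mu(L,L,M)=V_3>0$; this is the one to which Theorem~\ref{th2} applies under the stated hypotheses. Your own invocation of $V_3>0$ and $K_1\in C^{2,\alpha}_+$ makes clear that this is the equality you actually intended---just swap ``first'' and ``second''.
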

\begin{proof} 
In order to deduce \eqref{eq:hermitianBM} from the Aleksandrov-Fenchel inequality \eqref{eq:AF3} one may proceed exactly as in the case of \eqref{eq:general_brunn-minkowski}, see, e.g., \cite{schneider14}*{Theorem 7.4.5}.	
Turning to the equality cases, first note that \eqref{eq:hermitianBM} implies the concavity of the function
$$ f(\lambda) = \mu((1-\lambda) K_0 + \lambda K_1 [m],  K_{m+1},\ldots, K_3)^\frac{1}{m}, \qquad \lambda\in[0,1]$$
and that $f$ is $C^\infty$ on $(0,1)$ by \eqref{eq:hermitianBM_pos}. If equality holds in \eqref{eq:hermitianBM}, then
$$f(\lambda)- (1-\lambda) f(0)-\lambda f(1)\geq 0$$
attains a global minimum at $\lambda=1/2$. Since $f$ is also concave, we obtain
$$0=f''(1/2)= 4 (m-1)\mu_{(0)} ^{\frac{1}{m}-2} \left( \mu_{(0)}\mu_{(2)} - \mu_{(1)}^2\right),$$
where 
$$\mu_{(i)}= \mu(2^{-1} ( K_0 + K_1)[i], K_1[m-i], K_{m+1}, \ldots, K_3)$$
for $i=0,1,2$. 
From Theorem~\ref{th2} we deduce that $K_0$ and $K_1$ are homothetic.

\end{proof}

The term ``quermassintegral'' is derived from the German ``Querma\ss'', which can be the measure of either a cross-section or a projection. The classical isoperimetric inequalities for quermassintegrals ($k=1,\ldots,n-1$),
\begin{equation}\label{eq:quermassintegral}\left(\int_{\Grass_{k-1}} \vol_{k-1}(K|E) \; dE \right)^k \geq \frac{\omega_{k-1}^k}{\omega_k^{k-1}} \left(\int_{\Grass_{k}} \vol_{k}(K|E) \; dE \right)^{k-1}
\end{equation}
are a direct consequence of the Aleksandrov-Fenchel inequality \eqref{eq:aleksandrov_fenchel}. Applying \eqref{eq:AF3} iteratively, yields, as in the euclidean case, the inequalities
\begin{equation}\label{eq:AF_iso}
 \mu(K,L,L)^3 \geq  \mu(K)\mu(L)^2
\end{equation}
and 
$$\mu(K,L,M)^3 \geq \mu(K) \mu(L)\mu(M).$$
In particular, letting $K$ or $L$ be the unit ball of $\CC^n$, we obtain
\begin{equation}\label{eq:muquer}
 \mu(K,B,B)^3\geq \mu(B)^2 \mu(K)\qquad \text{and}\qquad \mu(K,K,B)^3\geq \mu(B) \mu(K)^2.
\end{equation}
In both inequalities, as a consequence of Theorem~\ref{th2}, if the left-hand side is non-zero, then equality holds if and only if $B$ is a ball.

\begin{lemma} \label{lem:mu_der} Let $\theta, \theta' \in [0,\pi/2]$. 
 If $\mu=\psi_\theta$, then 
$$\mu(K,K,B)= \frac{4}{3} \varphi_{\theta'}$$
with $3 \cos^2\theta' = \cos^2\theta$.
\end{lemma}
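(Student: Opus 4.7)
The plan is to compute $\psi_\theta(K,K,B)$ by polarizing the integral definition of $\psi_\theta$, recognize the result as an element of the two-dimensional space $\Val_2^{U(n)}$, and then match it against $\tfrac{4}{3}\varphi_{\theta'}$ via Klain functions.

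First, I would polarize using $\psi_\theta(K,K,B) = \tfrac{1}{3}\tfrac{d}{dt}\big|_{t=0}\psi_\theta(K+tB)$. Since $(K+tB)|E = K|E + tB_E$ inside each $3$-plane $E$, and the $3$-dimensional Steiner expansion in $E\cong\RR^3$ gives $\tfrac{d}{dt}\big|_{t=0}\vol_3((K+tB)|E) = S_E(K|E)$, where $S_E$ denotes surface area inside $E$, I obtain
\begin{equation*}
\psi_\theta(K,K,B) = \frac{1}{3}\int_{\Grass_3(\theta)} S_E(K|E)\,dE.
\end{equation*}
Applying Cauchy's surface area formula $S_E(K') = 4\int_{\Grass_2(E)}\vol_2(K'|F)\,dF$ inside each $E\cong\RR^3$ then yields the double integral
\begin{equation*}
\psi_\theta(K,K,B) = \frac{4}{3}\int_{\Grass_3(\theta)}\int_{\Grass_2(E)}\vol_2(K|F)\,dF\,dE.
\end{equation*}

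Second, I would observe that both $\psi_\theta(\cdot,\cdot,B)$ and $\tfrac{4}{3}\varphi_{\theta'}$ are continuous, translation- and $U(n)$-invariant valuations of degree two, and hence lie in $\Val_2^{U(n)} = \operatorname{span}\{\mu_{2,0},\mu_{2,1}\}$. By Klain's theorem they coincide if and only if their Klain functions agree on $\Grass_2(\CC^n)$; since $\Klain_{\mu_{2,0}}(F_\alpha) = 1-\cos^2\alpha$ and $\Klain_{\mu_{2,1}}(F_\alpha) = \cos^2\alpha$, every such Klain function is affine in $\cos^2\alpha$. It therefore suffices to verify the identity at two distinct K\"ahler angles, e.g.\ on a complex line (where $\cos^2\alpha = 1$) and on a Lagrangian $2$-plane (where $\cos^2\alpha = 0$).

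Third, I would evaluate both valuations on the unit disk $B_{F_\alpha}\subset F_\alpha$. Since $B_{F_\alpha}|E$ is a flat $2$-disk of area $\pi\,|P_E|_{F_\alpha}|$ in the $3$-plane $E$ and hence of surface area twice its area, the inner Cauchy integral gives
\begin{equation*}
\Klain_{\psi_\theta(\cdot,\cdot,B)}(F_\alpha) = \frac{2}{3}\int_{\Grass_3(\theta)}\bigl|P_E|_{F_\alpha}\bigr|\,dE,
\end{equation*}
where $|P_E|_{F_\alpha}|$ denotes the Jacobian of the orthogonal projection $F_\alpha \to E$. Meanwhile, Lemma~\ref{lem:quermassintegrals} supplies an explicit formula for $\tfrac{4}{3}\Klain_{\varphi_{\theta'}}(F_\alpha)$, and the substitution $3\cos^2\theta' = \cos^2\theta$ is precisely the relation that forces the two affine-in-$\cos^2\alpha$ Klain functions to agree at the two test angles. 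The main obstacle is the explicit evaluation of the Grassmannian integral $\int_{\Grass_3(\theta)}|P_E|_{F_\alpha}|\,dE$ for the two chosen values of $\alpha$; this can be handled either by a direct parametrization of $\Grass_3(\theta)$ via Tasaki's description of the $U(n)$-orbits, or more efficiently by expanding $\psi_\theta = c_0\mu_{3,0} + c_1\mu_{3,1}$ via Lemma~\ref{lem:quermassintegrals} and computing the induced linear map $\mu \mapsto \mu(\cdot,\cdot,B)$ on the basis $\{\mu_{3,0},\mu_{3,1}\}$ of $\Val_3^{U(n)}$ using the kinematic framework of Bernig and Fu.
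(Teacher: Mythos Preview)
Your geometric route via Cauchy's surface-area formula is correct and has the virtue of making the constant $\tfrac{4}{3}$ transparent: it is the $4$ from Cauchy's formula in $\RR^3$ over the $3$ from polarization. In the paper's argument this constant emerges only after tracking $\omega_{2n-2}$, $\omega_{2n-3}$, $(2n-3)!!$, and the normalizations in Lemma~\ref{lem:quermassintegrals} through the computation, so your framing is a genuine expository gain.

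That said, the identification of $\theta'$ --- the real content of the lemma --- is not carried out in your proposal; you correctly flag it as the ``main obstacle'' and list two ways to handle it, but your option~(b) \emph{is} the paper's proof. The paper skips the Cauchy preamble entirely and uses the already-established formula~\eqref{eq:RuminD} for $T\lrcorner D\omega$ (together with the definition of $S_\mu$) to read off directly that for $\mu=c_0\mu_{3,0}+c_1\mu_{3,1}$,
\[
\mu(K,K,B)=\frac{2\omega_{2n-2}}{3\omega_{2n-3}}\bigl((c_1+(n-2)c_0)\,\mu_{2,0}+(n-1)c_1\,\mu_{2,1}\bigr),
\]
and then matches the ratio of coefficients against the expression for $\varphi_{\theta'}$ in Lemma~\ref{lem:quermassintegrals} to obtain $3\cos^2\theta'=\cos^2\theta$. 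Your Grassmannian integral $\int_{\Grass_3(\theta)}|P_E|_{F_\alpha}|\,dE$ is never evaluated, and doing so via option~(a) (Tasaki's parametrization) would be substantially more work than this two-line basis computation. In short: the Cauchy step is a pleasant piece of intuition bolted onto the front, but the proof itself lives entirely in the step you defer to your last sentence, and that step is exactly what the paper does.
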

\begin{proof}
 From the definition of $S_\mu$ and \eqref{eq:RuminD}, we have for $\mu=c_0\mu_{3,0}+ c_1\mu_{3,1}$
$$\mu(K,K,B)= \frac{2\omega_{2n-2}}{3\omega_{2n-3}} \left( (c_1 +(n-2)c_0) \mu_{2,0}+ (n-1)c_1 \mu_{2,1} \right).$$
This and Lemma~\ref{lem:quermassintegrals} imply that $\mu(K,K,B)$ is a multiple of $\varphi_\theta$ if and only if
$$\frac{3+\cos^2 \theta}{\cos^2\theta-3(2n-1)}=\frac{1+\cos^2\theta'}{\cos^2\theta'-2n+1}$$
which is the case if and only if $3 \cos^2\theta' = \cos^2\theta$.
\end{proof}

Combining \eqref{eq:muquer} and Lemma \ref{lem:mu_der} yields the following.

\begin{theorem} Let $\theta$ satisfy \eqref{eq:theta3} and choose $\theta'\in [0,\pi/2]$ such that 
$3 \cos^2\theta' = \cos^2\theta$. Then 
$$\left( \int_{\Grass_2(\theta')} \vol_2(K|E) \; dE \right)^3 \geq \frac{9\pi}{16} \left( \int_{\Grass_3(\theta)} \vol_3(K|E) \; dE \right)^2$$
for all convex bodies in $\CC^n$. If the left-hand side is non-zero, then equality holds if and only if $K$ is a ball.
\end{theorem}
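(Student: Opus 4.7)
The plan is to specialize the iterated Aleksandrov--Fenchel inequality $\mu(K,K,B)^3 \geq \mu(B)\mu(K)^2$ from \eqref{eq:muquer} to $\mu=\psi_\theta$, convert the mixed functional $\psi_\theta(K,K,B)$ into the pure quadratic valuation $\varphi_{\theta'}$ via Lemma~\ref{lem:mu_der}, and compute the constant $\psi_\theta(B)$ by direct evaluation.

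First I would evaluate $\psi_\theta(B)$: for any real $3$-plane $E\subset\CC^n$ the orthogonal projection of the euclidean unit ball is the $3$-dimensional unit ball of volume $\omega_3=\frac{4\pi}{3}$, and since $dE$ is the $U(n)$-invariant probability measure on $\Grass_3(\theta)$,
\[
\psi_\theta(B)=\int_{\Grass_3(\theta)}\vol_3(B|E)\,dE=\frac{4\pi}{3}.
\]
Combining this with Lemma~\ref{lem:mu_der}, which gives $\psi_\theta(K,K,B)=\tfrac{4}{3}\varphi_{\theta'}(K)$ when $3\cos^2\theta'=\cos^2\theta$, and inserting into \eqref{eq:muquer} yields
\[
\Bigl(\tfrac{4}{3}\varphi_{\theta'}(K)\Bigr)^3 \;\geq\; \tfrac{4\pi}{3}\,\psi_\theta(K)^2,
\]
which simplifies to $\varphi_{\theta'}(K)^3\geq \tfrac{9\pi}{16}\psi_\theta(K)^2$. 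Rewriting $\varphi_{\theta'}$ and $\psi_\theta$ via their integral definitions is precisely the claimed isoperimetric inequality.

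For the equality case, I would rely on the remark immediately following \eqref{eq:muquer}: in the derivation of $\mu(B,K,K)^3\geq\mu(B)\mu(K)^2$ from two applications of the basic Aleksandrov--Fenchel inequality \eqref{eq:AF3}, the crucial step is $\psi_\theta(B,B,K)^2\geq\psi_\theta(B)\,\psi_\theta(K,K,B)$, whose inner argument $M=B$ is a ball; hence Theorem~\ref{th2} applies in the extended range \eqref{eq:theta3} (the endpoints included, exactly because $M$ is a ball) and forces $K$ and $B$ to be homothetic whenever the left-hand side is positive, i.e.\ $K$ is a ball. Conversely, for $K=rB+t$ one computes $\varphi_{\theta'}(K)=r^2\pi$ and $\psi_\theta(K)=r^3\frac{4\pi}{3}$, so $\varphi_{\theta'}(K)^3/\psi_\theta(K)^2=\tfrac{9\pi}{16}$, confirming both the constant and sharpness.

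Every ingredient is already established, so there is no genuine obstacle; the only thing to watch is bookkeeping of the numerical constant $\tfrac{9\pi}{16}$, which the explicit ball computation in the last step verifies independently.
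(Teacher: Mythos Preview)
Your proposal is correct and follows exactly the paper's approach: the paper simply states that the theorem follows by combining \eqref{eq:muquer} with Lemma~\ref{lem:mu_der}, and you have spelled out the missing details (the value $\psi_\theta(B)=\tfrac{4\pi}{3}$, the arithmetic producing $\tfrac{9\pi}{16}$, and the equality analysis via Theorem~\ref{th2} with $M=B$). Your observation that the relevant Aleksandrov--Fenchel step has $M=B$, so that Theorem~\ref{th2} covers the closed range \eqref{eq:theta3}, is precisely what underlies the paper's remark after \eqref{eq:muquer}.
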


By \eqref{eq:dimension}, the space $\Val^{U(n)}_1$ is $1$-dimensional and as such spanned by the first intrinsic volume or mean width which is defined by
$$\int_{\Grass_1} \vol_1(K|E)\; dE.$$
In particular, $\mu(K,B)$ is a constant multiple of the mean width of $K$. Hence Theorems~\ref{th1} and \ref{th2} imply the following.

\begin{theorem}
If $\theta$ satisfies \eqref{eq:theta2}, then 
$$\left(\int_{\Grass_1} \vol_1(K|E) \; dE \right)^2 \geq \frac{4}{\pi} \int_{\Grass_2(\theta)} \vol_2(K|E) \; dE $$
for all convex bodies in $\CC^n$. If the left-hand side is non-zero, then equality holds if and only if $K$ is a ball.
\end{theorem}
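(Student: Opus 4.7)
The plan is to deduce the inequality directly from the degree-2 Aleksandrov–Fenchel inequality \eqref{eq:AF2} applied to $\mu=\varphi_\theta$ with $L=B$, the unit ball in $\CC^n$. Theorem~\ref{th1} gives
$$\varphi_\theta(K,B)^{2}\;\ge\;\varphi_\theta(K)\,\varphi_\theta(B)$$
whenever $\cos^2\theta\le\frac{n+1}{2n}$, so the task reduces to identifying both $\varphi_\theta(K,B)$ and $\varphi_\theta(B)$ as explicit multiples of well-known functionals.

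For $\varphi_\theta(B)$ I would simply observe that the orthogonal projection of the unit ball onto any 2-plane is a unit disk, so $\vol_2(B|E)=\pi$ for every $E$ and $\varphi_\theta(B)=\pi$. For $\varphi_\theta(K,B)$ I would exploit the 1-dimensionality of $\Val^{U(n)}_1$ coming from \eqref{eq:dimension}: the map $K\mapsto\varphi_\theta(K,B)$ is a continuous, translation-invariant, $U(n)$-invariant valuation of degree $1$ (the $U(n)$-invariance uses that $B$ itself is $U(n)$-invariant), hence must be a scalar multiple of $V_1(K):=\int_{\Grass_1}\vol_1(K|E)\,dE$, which also spans $\Val^{U(n)}_1$. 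The proportionality constant is fixed by the single evaluation $K=B$, where $\varphi_\theta(B,B)=\varphi_\theta(B)=\pi$ and $V_1(B)=2$, giving $\varphi_\theta(K,B)=\tfrac{\pi}{2}V_1(K)$. Substituting into the Aleksandrov–Fenchel inequality yields $V_1(K)^{2}\ge\tfrac{4}{\pi}\varphi_\theta(K)$, which is exactly the claim.

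For the equality statement I would appeal to Theorem~\ref{th2}. The hypothesis $\theta\in[\pi/4,\pi/2]$ gives $\cos^2\theta\le\tfrac{1}{2}<\tfrac{n+1}{2n}$, so we are strictly inside the range \eqref{eq:theta2_halfopen}; Theorem~\ref{th2} then forces $K$ and $B$ to be homothetic, i.e.\ $K$ is a ball. The condition $\varphi_\theta(B)=\pi>0$ is what allows us to invoke Theorem~\ref{th2} (with the roles of $K$ and $L$ played by $K$ and $B$).

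No serious obstacle is expected: the argument is essentially a bookkeeping exercise combining \eqref{eq:AF2}, \eqref{eq:dimension}, and Theorem~\ref{th2}. The only mild subtlety is the verification that $\cos^2\theta\le\tfrac{1}{2}$ implies $\cos^2\theta<\tfrac{n+1}{2n}$, so that one lands in the open range of Theorem~\ref{th2} and avoids the degenerate equality cases involving $\calH_{1,1}$.
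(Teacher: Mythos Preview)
Your approach is essentially identical to the paper's: apply \eqref{eq:AF2} from Theorem~\ref{th1} to $\mu=\varphi_\theta$ with $L=B$, identify $\varphi_\theta(\,\cdot\,,B)$ as a multiple of the mean width via $\dim\Val_1^{U(n)}=1$, and then invoke Theorem~\ref{th2} for the equality cases. The paper records exactly this argument in the paragraph preceding the theorem.

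One small slip to correct: the hypothesis of the statement you are proving is that $\theta$ satisfies \eqref{eq:theta2}, i.e.\ $0\le\cos^2\theta\le\frac{n+1}{2n}$, \emph{not} $\theta\in[\pi/4,\pi/2]$ (that is the stronger hypothesis of Theorem~\ref{th3} from the introduction). Your equality argument via Theorem~\ref{th2} still goes through verbatim on the half-open range \eqref{eq:theta2_halfopen}, so nothing changes for $\cos^2\theta<\frac{n+1}{2n}$; only the endpoint $\cos^2\theta=\frac{n+1}{2n}$ falls outside the case you handled, and there one must appeal to the special clause of Theorem~\ref{th2}.
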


\begin{bibdiv}
\begin{biblist}
\bib{abardia_etal12}{article}{
   author={Abardia, J.},
   author={Gallego, E.},
   author={Solanes, G.},
   title={The Gauss-Bonnet theorem and Crofton-type formulas in complex
   space forms},
   journal={Israel J. Math.},
   volume={187},
   date={2012},
   pages={287--315},
}

\bib{aleksandrov37}{article}{
   author={Aleksandrov, A. D.},
   title={Zur Theorie der gemischten Volumina von konvexen K\"orpern, II. Neue Ungleichungen zwischen den gemischten Volumina und ihre Anwendungen},
   language={Russian},
   journal={Mat. Sbornik N.S. },
   volume={2},
   date={1937},
   pages={1205--1238},
}

\bib{aleksandrov38}{article}{
   author={Aleksandrov, A. D.},
   title={Zur Theorie der gemischten Volumina von konvexen K\"orpern, IV. Die gemischten Diskriminanten und die gemischten Volumina},
   language={Russian},
   journal={Mat. Sbornik N.S. },
   volume={3},
   date={1938},
   pages={227--251},
}

\bib{alesker01}{article}{
   author={Alesker, S.},
   title={Description of translation invariant valuations on convex sets
   with solution of P. McMullen's conjecture},
   journal={Geom. Funct. Anal.},
   volume={11},
   date={2001},
   number={2},
   pages={244--272}
}

\bib{alesker03}{article}{
author ={Alesker, S.},
title={Hard {L}efschetz theorem for valuations, complex integral geometry,
  and unitarily invariant valuations},
journal={J. Differential Geom.}, 
volume={63},
date={2003},
number={1},
pages={63--95}
}

\bib{alesker10}{article}{
   author={Alesker, S.},
   title={Valuations on manifolds and integral geometry},
   journal={Geom. Funct. Anal.},
   volume={20},
   date={2010},
   number={5},
   pages={1073--1143},
   issn={1016-443X},
   review={\MR{2746948 (2012c:52014)}},
   doi={10.1007/s00039-010-0088-1},
}

\bib{alesker_etal11}{article}{
   author={Alesker, S.},
   author={Bernig, A.},
   author={Schuster, F. E.},
   title={Harmonic analysis of translation invariant valuations},
   journal={Geom. Funct. Anal.},
   volume={21},
   date={2011},
   number={4},
   pages={751--773},
   issn={1016-443X},
   review={\MR{2827009 (2012h:52020)}},
   doi={10.1007/s00039-011-0125-8},
}

\bib{alesker_etal99}{article}{
   author={Alesker, S.},
   author={Dar, S.},
   author={Milman, V.},
   title={A remarkable measure preserving diffeomorphism between two convex
   bodies in ${\bf R}^n$},
   journal={Geom. Dedicata},
   volume={74},
   date={1999},
   number={2},
   pages={201--212},
   issn={0046-5755},
   review={\MR{1674116 (2000a:52004)}},
   doi={10.1023/A:1005087216335},
}

\bib{alesker_faifman}{article}{
  author={Alesker, S.},
  author={Faifman, D.},
  title={Convex valuations invariant under the Lorentz group},
  eprint={arXiv:1301.6866 [math.DG]},
}

\bib{alesker_fu08}{article}{
   author={Alesker, S.},
   author={Fu, J. H. G.},
   title={Theory of valuations on manifolds. III. Multiplicative structure
   in the general case},
   journal={Trans. Amer. Math. Soc.},
   volume={360},
   date={2008},
   number={4},
   pages={1951--1981},
   issn={0002-9947},
   review={\MR{2366970 (2008k:52030)}},
   doi={10.1090/S0002-9947-07-04489-3},
}

\bib{andrews96}{article}{
   author={Andrews, B.},
   title={Contraction of convex hypersurfaces by their affine normal},
   journal={J. Differential Geom.},
   volume={43},
   date={1996},
   number={2},
   pages={207--230},
   issn={0022-040X},
   review={\MR{1424425 (97m:58045)}},
}

\bib{aubin98}{book}{
   author={Aubin, T.},
   title={Some nonlinear problems in Riemannian geometry},
   series={Springer Monographs in Mathematics},
   publisher={Springer-Verlag},
   place={Berlin},
   date={1998},
   pages={xviii+395},
   isbn={3-540-60752-8},
   review={\MR{1636569 (99i:58001)}},
}

\bib{bernig09a}{article}{
   author={Bernig, A.},
   title={A Hadwiger-type theorem for the special unitary group},
   journal={Geom. Funct. Anal.},
   volume={19},
   date={2009},
   number={2},
   pages={356--372},
   issn={1016-443X},
   review={\MR{2545241 (2010k:53121)}},
   doi={10.1007/s00039-009-0008-4},
}

\bib{bernig09b}{article}{
   author={Bernig, A.},
   title={A product formula for valuations on manifolds with applications to
   the integral geometry of the quaternionic line},
   journal={Comment. Math. Helv.},
   volume={84},
   date={2009},
   number={1},
   pages={1--19},
   issn={0010-2571},
   review={\MR{2466073 (2009k:53200)}},
   doi={10.4171/CMH/150},
}

\bib{bernig_fu06}{article}{
   author={Bernig, A.},
   author={Fu, J. H. G.},
   title={Convolution of convex valuations},
   journal={Geom. Dedicata},
   volume={123},
   date={2006},
   pages={153--169},
   issn={0046-5755},
   review={\MR{2299731 (2008f:53102)}},
   doi={10.1007/s10711-006-9115-7},
}

\bib{bernig_fu11}{article}{
   author={Bernig, A.},
   author={Fu, J. H. G.},
   title={Hermitian integral geometry},
   journal={Ann. of Math. (2)},
   volume={173},
   date={2011},
   pages={907--945},
}

\bib{bernig_etal}{article}{
   author={Bernig, A.},
   author={Fu, J. H. G.},
   author={Solanes, G.},
   title={Integral geometry of complex space forms},
   status={to appear in Geom. Funct. Anal.},
   eprint={arXiv:1204.0604 [math.DG]},
}

\bib{burago_zalgaller88}{book}{
   author={Burago, Yu. D.},
   author={Zalgaller, V. A.},
   title={Geometric inequalities},
   series={Grundlehren der Mathematischen Wissenschaften [Fundamental
   Principles of Mathematical Sciences]},
   volume={285},
   note={Translated from the Russian by A. B. Sosinski\u\i;
   Springer Series in Soviet Mathematics},
   publisher={Springer-Verlag},
   place={Berlin},
   date={1988},
   pages={xiv+331},
   isbn={3-540-13615-0},
   review={\MR{936419 (89b:52020)}},
}

\bib{chang_wang11}{article}{
   author={Chang, S.-Y. A},
   author={Wang, Y.},
   title={On Aleksandrov-Fenchel inequalities for $k$-convex domains},
   journal={Milan J. Math.},
   volume={79},
   date={2011},
   number={1},
   pages={13--38},
   issn={1424-9286},
   review={\MR{2831436 (2012h:52005)}},
   doi={10.1007/s00032-011-0159-2},
}

\bib{chang_wang13}{article}{
   author={Chang, S.-Y. A.},
   author={Wang, Y.},
   title={Inequalities for quermassintegrals on $k$-convex domains},
   journal={Adv. Math.},
   volume={248},
   date={2013},
   pages={335--377},
   issn={0001-8708},
   review={\MR{3107515}},
   doi={10.1016/j.aim.2013.08.006},
}

\bib{fenchel36}{article}{
  author={Fenchel, W.},
  title={In\'{e}galit\'{e}s quadratiques entre les volumes mixtes des corps convexes},
  journal={C. R. Acad. Sci. Paris},
  volume={203},
  date={1936},
  pages={647--650},
}

\bib{fenchel36b}{article}{
  author={Fenchel, W.},
  title={G\'{e}n\'{e}ralisation du th\'{e}or\`{e}me de Brunn et Minkowski concernant les corps convexes},
  journal={C. R. Acad. Sci. Paris},
  volume={203},
  date={1936},
  pages={764--766},
}

\bib{fu06}{article}{
   author={Fu, J. H. G.},
   title={Structure of the unitary valuation algebra},
   journal={J. Differential Geom.},
   volume={72},
   date={2006},
   number={3},
   pages={509--533}
}

\bib{garding59}{article}{
   author={G\aa rding, L.},
   title={An inequality for hyperbolic polynomials},
   journal={J. Math. Mech.},
   volume={8},
   date={1959},
   pages={957--965},

}

\bib{guan_li09}{article}{
   author={Guan, P.},
   author={Li, J.},
   title={The quermassintegral inequalities for $k$-convex starshaped
   domains},
   journal={Adv. Math.},
   volume={221},
   date={2009},
   number={5},
   pages={1725--1732},
   issn={0001-8708},
   review={\MR{2522433 (2010i:52021)}},
   doi={10.1016/j.aim.2009.03.005},
}

\bib{haberl_parapatits14}{article}{
  author={Haberl, C.},
  author={Parapatits, L.},
  title={Valuations and surface area measures},
  journal={J. Reine Angew. Math.}, 
  volume={687},
  date={2014},
  pages={225--245},
}

\bib{haberl_parapatits15}{article}{
  author={Haberl, C.},
  author={Parapatits, L.},
  title={The centro-affine Hadwiger theorem},
  journal={to appear in J. Amer. Math. Soc.}, 
  eprint={arXiv:1307.0797 [math.MG]},
}

\bib{henrot06}{book}{
   author={Henrot, A.},
   title={Extremum problems for eigenvalues of elliptic operators},
   series={Frontiers in Mathematics},
   publisher={Birkh\"auser Verlag},
   place={Basel},
   date={2006},
   pages={x+202},
   isbn={978-3-7643-7705-2},
   isbn={3-7643-7705-4},
   review={\MR{2251558 (2007h:35242)}},
}

\bib{hilbert12}{book}{
  author={Hilbert, D.},
  title={Grundz\"uge einer allgemeinen Theorie der linearen Integralgleichungen},
  publisher={ B. G. Teubner},
  place={Leipzig},
  date={1912},
}

\bib{johnson_wallach77}{article}{
   author={Johnson, K. D.},
   author={Wallach, N. R.},
   title={Composition series and intertwining operators for the spherical
   principal series. I},
   journal={Trans. Amer. Math. Soc.},
   volume={229},
   date={1977},
   pages={137--173},
   issn={0002-9947},
   review={\MR{0447483 (56 \#5794)}},
}

\bib{klain00}{article}{
   author={Klain, Daniel A.},
   title={Even valuations on convex bodies},
   journal={Trans. Amer. Math. Soc.},
   volume={352},
   date={2000},
   number={1},
   pages={71--93},
   issn={0002-9947},
   review={\MR{1487620 (2000c:52003)}},
   doi={10.1090/S0002-9947-99-02240-0},
}

\bib{knapp02}{book}{
   author={Knapp, A. W.},
   title={Lie groups beyond an introduction},
   series={Progress in Mathematics},
   volume={140},
   edition={2},
   publisher={Birkh\"auser Boston Inc.},
   place={Boston, MA},
   date={2002},
   pages={xviii+812},
   isbn={0-8176-4259-5},
   review={\MR{1920389 (2003c:22001)}},
}

\bib{lam10}{book}{
   author={Lam, M.-K. G.},
   title={The Graph Cases of the Riemannian Positive Mass and Penrose
   Inequalities in All Dimensions},
   note={Thesis (Ph.D.)--Duke University},
   publisher={ProQuest LLC, Ann Arbor, MI},
   date={2011},
   pages={88},
   isbn={978-1124-63054-0},
   review={\MR{2873434}},
}

\bib{ludwig06}{article}{
   author={Ludwig, M.},
   title={Intersection bodies and valuations},
   journal={Amer. J. Math.},
   volume={128},
   date={2006},
   number={6},
   pages={1409--1428},
   issn={0002-9327},
   review={\MR{2275906 (2008a:52012)}},
}

\bib{ludwig10}{article}{
   author={Ludwig, M.},
   title={Minkowski areas and valuations},
   journal={J. Differential Geom.},
   volume={86},
   date={2010},
   pages={133--161},
}

\bib{ludwig_reitzner10}{article}{
   author={Ludwig, M.},
   author={Reitzner, M.},
   title={A classification of ${\rm SL}(n)$ invariant valuations},
   journal={Ann. of Math. (2)},
   volume={172},
   date={2010},
   number={2},
   pages={1219--1267},
   issn={0003-486X},
   review={\MR{2680490 (2011g:52025)}},
   doi={10.4007/annals.2010.172.1223},
}

\bib{lutwak85}{article}{
   author={Lutwak, E.},
   title={Mixed projection inequalities},
   journal={Trans. Amer. Math. Soc.},
   volume={287},
   date={1985},
   number={1},
   pages={91--105},
   issn={0002-9947},
   review={\MR{766208 (86c:52015)}},
   doi={10.2307/2000399},
}

\bib{lutwak93}{article}{
   author={Lutwak, E.},
   title={Inequalities for mixed projection bodies},
   journal={Trans. Amer. Math. Soc.},
   volume={339},
   date={1993},
   number={2},
   pages={901--916},
   issn={0002-9947},
   review={\MR{1124171 (93m:52011)}},
   doi={10.2307/2154305},
}

\bib{mcmullen77}{article}{
   author={McMullen, P.},
   title={Valuations and Euler-type relations on certain classes of convex
   polytopes},
   journal={Proc. London Math. Soc. (3)},
   volume={35},
   date={1977},
   number={1},
   pages={113--135},
   issn={0024-6115},
   review={\MR{0448239 (56 \#6548)}},
}

\bib{michael_simon73}{article}{
   author={Michael, J. H.},
   author={Simon, L. M.},
   title={Sobolev and mean-value inequalities on generalized submanifolds of
   $R^{n}$},
   journal={Comm. Pure Appl. Math.},
   volume={26},
   date={1973},
   pages={361--379},
   issn={0010-3640},
   review={\MR{0344978 (49 \#9717)}},
}

\bib{minkowski03}{article}{
  author={Minkowski, H.},
  title={Volumen und Oberfl\"ache},
  journal={Math. Ann.},
  volume={57},
  date={1903},
  pages={447--495},
}

\bib{quinto87}{article}{
   author={Quinto, E. T.},
   title={Injectivity of rotation invariant Radon transforms on complex
   hyperplanes in ${\bf C}^n$},
   conference={
      title={Integral geometry},
      address={Brunswick, Maine},
      date={1984},
   },
   book={
      series={Contemp. Math.},
      volume={63},
      publisher={Amer. Math. Soc.},
      place={Providence, RI},
   },
   date={1987},
   pages={245--260},

}

\bib{parapatits_schuster12}{article}{
   author={Parapatits, L.},
   author={Schuster, F. E.},
   title={The Steiner formula for Minkowski valuations},
   journal={Adv. Math.},
   volume={230},
   date={2012},
   number={3},
   pages={978--994},
   issn={0001-8708},
   review={\MR{2921168}},
   doi={10.1016/j.aim.2012.03.024},
}		

\bib{parapatits_wannerer13}{article}{
   author={Parapatits, L.},
   author={Wannerer, T.},
   title={On the inverse Klain map},
   journal={Duke Math. J.},
   volume={162},
   date={2013},
   number={11},
   pages={1895--1922},
   issn={0012-7094},
   review={\MR{3090780}},
   doi={10.1215/00127094-2333971},
}

\bib{stanley81}{article}{
   author={Stanley, R. P.},
   title={Two combinatorial applications of the Aleksandrov-Fenchel
   inequalities},
   journal={J. Combin. Theory Ser. A},
   volume={31},
   date={1981},
   number={1},
   pages={56--65},
   issn={0097-3165},
   review={\MR{626441 (83h:52013)}},
   doi={10.1016/0097-3165(81)90053-4},
}

\bib{schneider14}{book}{
   author={Schneider, R.},
   title={Convex bodies: the Brunn-Minkowski theory},
   note={2nd expanded ed.},
series={Encyclopedia of Mathematics and its Applications},
   volume={151},
   publisher={Cambridge University Press},
   place={Cambridge},
   date={2014},
}

\bib{schuster08}{article}{
   author={Schuster, F. E.},
   title={Valuations and Busemann-Petty type problems},
   journal={Adv. Math.},
   volume={219},
   date={2008},
   number={1},
   pages={344--368},
   issn={0001-8708},
   review={\MR{2435426 (2009f:52018)}},
   doi={10.1016/j.aim.2008.05.001},
}

\bib{schuster10}{article}{
   author={Schuster, F. E.},
   title={Crofton measures and Minkowski valuations},
   journal={Duke Math. J.},
   volume={154},
   date={2010},
   number={1},
   pages={1--30},
   issn={0012-7094},
   review={\MR{2668553 (2011g:52026)}},
   doi={10.1215/00127094-2010-033},
}

\bib{tasaki03}{article}{
   author={Tasaki, H.},
   title={Generalization of K\"ahler angle and integral geometry in complex
   projective spaces. II},
   journal={Math. Nachr.},
   volume={252},
   date={2003},
   pages={106--112},
   issn={0025-584X},
   review={\MR{1903043 (2004d:53097)}},
   doi={10.1002/mana.200310040},
}

\bib{teissier79}{article}{
   author={Teissier, B.},
   title={Du th\'eor\`eme de l'index de Hodge aux in\'egalit\'es
   isop\'erim\'etriques},
   language={French, with English summary},
   journal={C. R. Acad. Sci. Paris S\'er. A-B},
   volume={288},
   date={1979},
   number={4},
   pages={A287--A289},
   issn={0151-0509},
   review={\MR{524795 (80k:14014)}},
}

\bib{wannerer13}{article}{
   author={Wannerer, T.},
   title={The module of unitarily invariant area measures},
   journal={J. Differential Geom.},
   volume={96},
   date={2014},
   pages={141--182},
}

\bib{wannerer13+}{article}{
   author={Wannerer, T.},
   title={Integral geometry of unitary area measures},
   eprint={arXiv:1308.6163 [math.DG]}
}

\bib{wells08}{book}{
   author={Wells, R. O.},
   title={Differential analysis on complex manifolds},
   series={Graduate Texts in Mathematics},
   volume={65},
   edition={3},
   publisher={Springer},
   place={New York},
   date={2008},
   pages={xiv+299},
   isbn={978-0-387-73891-8},
   review={\MR{2359489 (2008g:32001)}},
   doi={10.1007/978-0-387-73892-5},
}

\bib{zhang94}{article}{
   author={Zhang, G. Y.},
   title={Centered bodies and dual mixed volumes},
   journal={Trans. Amer. Math. Soc.},
   volume={345},
   date={1994},
   number={2},
   pages={777--801},
   issn={0002-9947},
   review={\MR{1254193 (95d:52008)}},
   doi={10.2307/2154998},
}

\end{biblist}
\end{bibdiv}

\end{document}